\numberwithin{equation}{section} 
\theoremstyle{plain}
\newtheorem{thm}{Theorem}[section] 
\newtheorem{cor}[thm]{Corollary}
\newtheorem{prop}[thm]{Proposition}
\newtheorem{lem}[thm]{Lemma}
\theoremstyle{definition} 
\newtheorem{defn}[thm]{Definition}
\newtheorem{lem-defn}[thm]{Lemma-Definition}
\newtheorem{setting}[thm]{Setting}
\newtheorem{eg}[thm]{Example} 
\newtheorem*{notation}{Notation}
\theoremstyle{remark}
\newtheorem{rem}[thm]{Remark}
\newtheorem*{claim}{Claim}
\newtheorem*{acknowledgement}{Acknowledgments}
\newtheorem*{claimproof}{Proof of Claim}
\def\ge{\geqslant}
\def\le{\leqslant}
\def\phi{\varphi}
\def\epsilon{\varepsilon}
\def\tilde{\widetilde}
\def\bar{\overline}
\def\mapsto{\longmapsto}
\def\mod{\operatorname{\,mod}}
\newcommand{\F}{\mathbb{F}}
\newcommand{\N}{\mathbb{N}}
\newcommand{\Q}{\mathbb{Q}} 
\newcommand{\C}{\mathbb{C}} 
\newcommand{\R}{\mathbb{R}} 
\newcommand{\Z}{\mathbb{Z}}
\newcommand{\ba}{\mathfrak{a}}
\newcommand{\bb}{\mathfrak{b}}
\newcommand{\m}{\mathfrak{m}}
\newcommand{\n}{\mathfrak{n}}
\newcommand{\p}{\mathfrak{p}}
\newcommand{\q}{\mathfrak{q}}
\newsavebox{\circlebox}
\savebox{\circlebox}{\fontencoding{OMS}\selectfont\Large\char13}
\newlength{\circleboxwdht}
\def\Hom{\operatorname{Hom}}
\def\Spec{\operatorname{Spec}}
\def\Div{\operatorname{div}}
\def\id{\operatorname{id}}
\def\RHom{\operatorname{RHom}}
\def\H{\operatorname{H}}
\def\B{\mathcal{B}}
\DeclareMathOperator*{\ulim}{ulim}
\def\Ann{\operatorname{Ann}}
\def\Ext{\operatorname{Ext}}
\def\J{\mathcal{J}}
\def\upf{\operatorname{upf}}
\title{$F$-pure and $F$-injective singularities in equal characteristic zero}
\author{Tatsuki Yamaguchi}
\address{Graduate School of Mathematical Sciences, University of Tokyo, 3-8-1 Komaba, Meguro-ku, Tokyo 153-8914, Japan}
\email{tyama@ms.u-tokyo.ac.jp}
\keywords{}
\subjclass[2020]{}
\begin{document}

\begin{abstract}
	Inspired by Schoutens' results, we introduce a variant of sharp $F$-purity and sharp $F$-injectivity in equal characteristic zero via ultraproducts. As an application, we show that if $R\to S$ is pure and $S$ is of dense $F$-pure type, then $R$ is of dense $F$-pure type.
\end{abstract}

\maketitle
%\tableofcontents

%%%%%%%%%%%%%%%%%%%%%%%%%%%%%%%%%%%%%%%%%%%%%%%%%%%%%%%%%%%%%%%%%%%%%%%%%%%%%%
\section{Introduction}
%%%%%%%%%%%%%%%%%%%%%%%%%%%%%%%%%%%%%%%%%%%%%%%%%%%%%%%%%%%%%%%%%%%%%%%%%%%%%%
A ring homomorphism $R\to S$ is said to be {\it pure} if for any $R$-module $M$, the natural map $M \to M\otimes_R S$ is injective. Examples of pure maps include split maps and faithfully flat maps. Geometrically, when a linearly reductive group $G$ acts on an affine variety $Y:=\Spec S$, then the inclusion $R\hookrightarrow S$ from the ring of invariants $R:=S^G$ is pure, where $X:=\Spec R$ is the quotient of $Y$ by the action of $G$. It is natural to ask what properties descend under pure morphisms $R\to S$. We list some known results below:
\begin{enumerate}
	\item Boutot \cite{Bou87} showed that if $R$ and $S$ are essentially of finite type over a field of characteristic zero and if $S$ has rational singularities, then $R$ has rational singularities.
	\item Schoutens \cite{Sch05} showed that if both $R$ and $S$ are $\Q$-Gorenstein normal local domains essentially of finite type over $\C$ and if $S$ has log terminal singularities, then so does $R$.
	On the other hand, Braun, Greb, Langlois and Moraga \cite{BGLM} showed that if  $S$ is of klt type and a linearly reductive group $G$ acts on $S$, then $S^G$ is also of klt type. Here singularities of klt type are a natural generalization of log terminal singularities to non-$\Q$-Gorenstein setting.
	Recently, Zhuang \cite{Zhu} generalized the above two results: if $S$ is of klt type, then $R$ is of klt type.
	\item Godfrey and Murayama \cite{GM22} showed that if $R$ and $S$ are essentially of finite type over $\C$, and if $S$ has Du Bois singularities, then $R$ has Du Bois singularities. This is a generalization of a result of Kov\'acs \cite{Kov}, who showed the same result when the morphism $R\to S$ splits.
\end{enumerate}
 These raise the question of whether log canonicity descends under pure morphisms (\cite[Question 2.11]{Zhu}). Some partial affirmative answers are known: for example, see \cite{GM22}, \cite{TY}. Log canonical singularities are closely related to $F$-pure singularities, introduced by Hochster and Roberts \cite{HR76}, which are singularities in positive characteristic defined by the purity of the Frobenius morphism. We say that a ring essentially of finite type over a field of characteristic zero is of dense $F$-pure type if its modulo $p>0$ reduction is $F$-pure for ``infinitely many $p$'' (for the precise definition of the singularities of dense $F$-pure type, see Definition \ref{definition of dense $F$-pure type}). Hara and Watanabe \cite{HW} showed that if a $\Q$-Gorenstein normal ring is of dense $F$-pure type, then it has log canonical singularities. Takagi \cite{Tak13} showed that the converse is also true if the weak ordinarity conjecture, proposed by Musta\c{t}\u{a} and Srinivas \cite{MSr11}, holds true.
 The aim of this paper is to show that being of dense $F$-pure type descends under pure morphisms.
  For this purpose, we use ultraproducts, which are a fundamental notion in non-standard analysis. Schoutens \cite{Sch05} used them to show that log terminal singularities descend under pure morphisms if the rings are $\Q$-Gorenstein. The author of this paper \cite{Yam} generalized his result in terms of multiplier ideals: a key observation is that if $R$ is $\Q$-Gorenstein normal local domain essentially of finite type over $\C$, then the BCM test ideal $\tau_{\B(R)}(R)$ associated to $\B(R)$, a big Cohen-Macaulay $R$-algebra constructed by Schoutens \cite{Sch04} using ultraproducts, is equal to the multiplier ideal $\J(\Spec R)$. As an application of this observation, he showed that if $R\to S$ is a pure local $\C$-algebra homomorphism between $\Q$-Gorenstein normal local domains essentially of finite type over $\C$, then we have $\J(\Spec S)\cap R\subseteq \J(\Spec R)$. We use a similar but more refined technique to study singularities of dense $F$-pure type. 
  
  $F$-purity is generalized by Takagi \cite{Tak04} to pairs $(R,\ba^t)$ of rings $R$ of positive characteristic and nonzero ideals $\ba$ of $R$ with real exponents $t>0$. The notion of sharp $F$-purity is a variant of Takagi's $F$-purity introduced by Schwede \cite{Schw08}, which behaves better in a geometric setting. For a given property $P$ defined for rings of characteristic $p>0$, singularities of dense $P$-type is defined in a similar way to the case of singularities of dense $F$-pure type.
  
  Our main theorem is stated as follows:
  \begin{thm}\label{main theorem 1}
 	Let $R\to S$ be a pure local $\C$-algebra homomorphism between reduced local rings essentially of finite type over $\C$, $\ba$ be a nonzero ideal of $R$ and $t$ be a positive real number. Suppose that $R$ is $\Q$-Gorenstein normal and $(S,(\ba S)^t)$ is of dense sharply $F$-pure type. Then $(R,\ba^t)$ is of dense sharply $F$-pure type.
 \end{thm}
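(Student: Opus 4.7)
The strategy is to interpose an ultraproduct-based characteristic zero variant of sharp $F$-purity for a pair---call it \emph{ultra-sharp $F$-purity}---and factor the theorem through two complementary statements: (a) ultra-sharp $F$-purity descends under pure local $\C$-algebra homomorphisms, without any $\Q$-Gorenstein hypothesis; and (b) when $R$ is $\Q$-Gorenstein normal, the pair $(R,\ba^t)$ is ultra-sharp $F$-pure if and only if it is of dense sharply $F$-pure type. Granting (a) and (b), the theorem follows by chaining: $(S,(\ba S)^t)$ of dense sharply $F$-pure type implies ultra-sharp $F$-pure via the easy direction of (b), which I expect to hold without a $\Q$-Gorenstein hypothesis on $S$; this propagates to ultra-sharp $F$-purity of $(R,\ba^t)$ by (a); and then the $\Q$-Gorenstein hypothesis on $R$ lets me apply the hard direction of (b) to conclude that $(R,\ba^t)$ is of dense sharply $F$-pure type.

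To set up the machinery I would fix a finitely generated $\Z$-subalgebra $A\subseteq \C$ carrying models of $R$, $S$ and $\ba$, reduce modulo closed points of $\Spec A$ of residue characteristic $p$, and form ultraproducts along a non-principal ultrafilter. This is the Schoutens setup of \cite{Sch04}, \cite{Sch05}, \cite{Yam}, and it produces a big Cohen-Macaulay $R$-algebra $\B(R)$ together with the ultraproducts of the positive characteristic Frobenii. Mirroring Schwede's splitting-style definition \cite{Schw08}, I would declare $(R,\ba^t)$ ultra-sharp $F$-pure if, for some $e\ge 1$ and some element $c$ in the ultraproduct of the ideals $\ba^{\lceil t(p^e-1)\rceil}$ across the reductions, the $R$-linear ``twisted Frobenius'' map $R\to \B(R)$ assembled from $c$ and the $e$-th Frobenii of the reductions is pure as a map of $R$-modules.

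Step (a) should be essentially formal: pureness of $R\to S$ extends by functoriality of the ultraproduct construction to an $R$-algebra map $\B(R)\to \B(S)$ intertwining the ultra-Frobenii of the reductions of $R$ and $S$, and a standard diagram chase mirroring the descent of sharp $F$-purity under pure maps in positive characteristic transfers the condition from $(S,(\ba S)^t)$ to $(R,\ba^t)$. The easy half of (b)---dense sharply $F$-pure type implies ultra-sharp $F$-purity---should follow directly from \L o\'s' theorem applied to the splitting data witnessing sharp $F$-purity in each reduction modulo $p$ lying in a member of the ultrafilter.

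The main obstacle is the reverse direction of (b): from a purity statement inside $\B(R)$ one must recover genuine sharp $F$-pure splittings in a density-one set of positive-characteristic reductions of $(R,\ba^t)$. Here the $\Q$-Gorenstein hypothesis should enter essentially, via a Matlis-dual reformulation of sharp $F$-purity as injectivity of a twisted ultra-Frobenius action on the top local cohomology module $\H^d_{\m}(R)$, with the twist coming from a compatible section of a $\Q$-Gorenstein anticanonical bundle. The $\Q$-Gorenstein condition is what allows this twist to extend coherently across the ultraproduct and forces the purity condition in $\B(R)$ to detect genuine splittings modulo $p$. This parallels and must extend the role that $\Q$-Gorensteinness played in \cite{Yam}, where it was used to identify the BCM test ideal $\tau_{\B(R)}(R)$ with the multiplier ideal $\J(\Spec R)$. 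Once the Matlis-dual formulation is in place and ultra-purity is translated into ultra-injectivity on local cohomology, \L o\'s' theorem delivers the required density-one family of sharp $F$-pure reductions modulo $p$.
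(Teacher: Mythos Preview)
Your high-level architecture---interpose an ultraproduct notion of sharp $F$-purity, show it descends under pure maps, and then prove an equivalence with dense sharply $F$-pure type under the $\Q$-Gorenstein hypothesis---is exactly the paper's strategy. But several of the technical choices in your proposal are off in ways that matter.

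First, the target of your ultra-Frobenius should not be $\B(R)$. That algebra is the ultraproduct of absolute integral closures $R_p^+$ and is tailored to $F$-regularity and test ideals, which is why it appeared in \cite{Yam}. For $F$-purity the relevant object is the ultraproduct of the Frobenius pushforwards themselves, i.e.\ the non-standard hull $R_\infty$ (or equivalently the ultra-perfect closure $R^{\upf}=\ulim_p R_p^{1/p^\infty}$). The paper defines $(R,\ba^t)$ to be sharply ultra-$F$-pure via purity of $fF^\epsilon:R\to F^\epsilon_*R_\infty$ for non-standard $\epsilon\in{^*\N}$ and $f\in\ba^{\lceil t(\pi^\epsilon-1)\rceil}$, and the quantification over all $\epsilon_0$ with $\epsilon\ge\epsilon_0$ (rather than your ``some $e\ge 1$'') is what makes the reverse implication to dense $F$-pure type go through. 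Crucially, $R^{\upf}$ is \emph{not} big Cohen--Macaulay, so you cannot borrow the regular-sequence arguments that worked for $\B(R)$.

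Second, and this is the main gap: your claim that ``\L o\'s' theorem delivers the required density-one family'' in the hard direction of (b) glosses over the central difficulty. Injectivity of a map on $\H^d_\m(R)$ is not a first-order statement in $R$, so \L o\'s does not apply directly; one must first compare $\H^i_\m(F^\epsilon_*R_\infty)$ with $\ulim_p\H^i_{\m_p}(F^{e_p}_*R_p)$. The paper devotes Section~3 to this, introducing Kawasaki's $\operatorname{p}$-standard systems of parameters to prove the needed injectivity (Proposition~\ref{proposition local cohomology ultraproducts injective}). Without this, the socle argument you sketch does not transfer to the approximations.

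Third, the paper does not prove your (b) directly for $\Q$-Gorenstein $R$. It proves the equivalence only in the quasi-Gorenstein case (where $\omega_R\cong R$ makes the socle argument on $\H^d_\m(R)$ work cleanly), and handles the $\Q$-Gorenstein case by passing to the canonical cover $\tilde R$: one shows ultra-$F$-purity ascends along $R\to\tilde R$ (using that the $\Z/r\Z$-grading is respected by ultra-Frobenii), applies the quasi-Gorenstein result to $\tilde R$, and then descends dense $F$-pure type back to $R$ along the split map $R\to\tilde R$. Your proposed ``twist by a $\Q$-Gorenstein anticanonical section'' is not how the argument runs.

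Finally, a smaller point: the easy direction of (b) is not quite \L o\'s applied to a fixed ultrafilter. Dense $F$-pure type gives a dense---not cofinite---set of good primes, so one must \emph{choose} the ultrafilter (and the isomorphism $\ulim_p\bar\F_p\cong\C$) to contain that set; this is Proposition~\ref{propositon dense F-pure type -> ultra-F-pure}.
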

 This theorem implies that log canonicity descends under pure morphisms between $\Q$-Gorenstein rings if the weak ordinarity conjecture holds true.
To show this theorem, we introduce the notion of ultra-$F$-purity, a variant of $F$-purity in equal characteristic zero via ultraproducts, defined by the purity of the ultra-Frobenii. We use the ultra-perfect closure $R^{\upf}$ (see Definition \ref{definiton of ultra-Frobenii and ultra-perfect closures}), an analogue of the big Cohen-Macaulay $R$-algebra $\B(R)$, to prove the equivalence of ultra-$F$-purity and being of dense $F$-pure type when the ring is $\Q$-Gorenstein. Since $R^{\upf}$ is not necessarily Cohen-Macaulay, we need to consider $\operatorname{p}$-standard sequences introduced by Kawasaki \cite{Kaw}, instead of regular sequences.

In the latter half of this paper, we consider a similar problem for $F$-injective singularities, which are considered a positive characteristic counterpart of Du Bois singularities. They originate in the study of $F$-purity and rational singularities by Fedder \cite{Fed83}. Schwede \cite{Schw09} showed that if a ring of finite type over a field of characteristic zero has dense $F$-injective type, then it has Du Bois singularities. The converse is wide open, which is shown to be equivalent to the weak ordinarity conjecture in \cite{BST}.

 We introduce the notion of ultra-$F$-injectivity, a variant of $F$-injectivity in equal characteristic zero, defined in a similar way to ultra-$F$-purity. It follows from a similar argument that ultra-$F$-injectivity is equivalent to being of dense $F$-injective type if the residue field is isomorphic to $\C$. This equivalence enables us to show that singularities of dense $F$-injective type descend under strongly pure morphisms introduced in \cite{CGM16}. Here a ring homomorphism $R\to S$ is said to be {\it strongly pure} if for any prime ideal $\q$ of $S$, the induced morphism $R_{\q \cap R} \to S_\q$ is pure. The conclusion is stated as follows:
	
	\begin{thm}
		Let $(R,\m)\to (S,\n)$ be a strongly pure local $\C$-algebra homomorphism between reduced local rings essentially of finite type over $\C$, $\ba$ be a nonzero ideal of $R$ and $t$ be a positive real number. Suppose that $R/\m \cong \C$. If $(S,(\ba S)^t)$ is of dense sharply $F$-injective type, then $(R,\ba^t)$ is of dense sharply $F$-injective type.
	\end{thm}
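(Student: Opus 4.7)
The proof will follow the same overall architecture as the proof of Theorem \ref{main theorem 1}, with ultra-sharp $F$-injectivity playing the role of ultra-sharp $F$-purity throughout. The plan is to translate both hypothesis and conclusion through the equivalence between dense sharply $F$-injective type and ultra-sharp $F$-injectivity (valid for local rings with residue field $\C$), descend the ultra-$F$-injectivity condition from $S$ to $R$ by exploiting strong purity at the level of local cohomology, and then translate back.

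First I would apply the equivalence to $R$: since $R/\m \cong \C$, the desired conclusion is equivalent to $(R,\ba^t)$ being ultra-sharp $F$-injective. For $S$ the residue field $S/\n$ need not equal $\C$, so a preliminary reduction is required---using that $S$ is essentially of finite type over $\C$ and that dense sharply $F$-injective type is inherited by localizations at primes $\q \subseteq \n$ of $S$ lying over $\m$, I would pass to such a $\q$ with $S/\q \cong \C$ (or, alternatively, develop a variant of the equivalence valid for residue field extensions of $\C$). The strong purity hypothesis, rather than ordinary purity, is precisely what guarantees that $R = R_\m \to S_\q$ remains pure after this relocalization.

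Next I would unpack ultra-sharp $F$-injectivity of $(S_\q,(\ba S_\q)^t)$ as the injectivity, for every cohomological degree $i$, of a twisted ultra-Frobenius operator on $H^i_{\q S_\q}(S_\q)$, where the twist involves elements in an appropriate ultra-power of $\ba S_\q$. Purity of $R \to S_\q$ yields an injection $H^i_\m(R) \hookrightarrow H^i_\m(R)\otimes_R S_\q \cong H^i_{\m S_\q}(S_\q) \to H^i_{\q S_\q}(S_\q)$, the first map being the tensor of purity with local cohomology and the second coming from the inclusion $\m S_\q \subseteq \q S_\q$. Functoriality of the ultra-Frobenius together with its compatibility with ring homomorphisms should show that this composition intertwines the twisted ultra-Frobenii on the two sides, so that injectivity on the $S_\q$-side forces injectivity on the $R$-side, thereby establishing ultra-sharp $F$-injectivity of $(R,\ba^t)$.

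The main obstacle will be this last step---verifying that the injection induced by strong purity genuinely commutes with the twisted ultra-Frobenii, so that a nonzero class in $H^i_\m(R)$ lifts to a class in $H^i_{\q S_\q}(S_\q)$ compatibly with the twist by elements of the ultra-power of $\ba$. This is the analogue, for $F$-injectivity, of the compatibility of the ultra-Frobenius with the big Cohen--Macaulay algebra $\B(R)$ used in Theorem \ref{main theorem 1}, but here it must be carried out on local cohomology in every degree rather than just at the top, and in a manner that respects the twisting. A secondary but nontrivial technical point is handling the residue-field gap on the $S$-side when invoking the equivalence between ultra-sharp $F$-injectivity and dense sharply $F$-injective type; depending on the precise statement of that equivalence, this may require its extension beyond the case of residue field exactly $\C$.
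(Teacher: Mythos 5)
Your overall architecture matches the paper's proof exactly: translate dense sharply $F$-injective type through the ultra-$F$-injectivity dictionary, descend along purity via a commutative diagram on local cohomology, and translate back using $R/\m\cong\C$. The paper proves the result under the weaker hypothesis (*) (purity of $R\to S_\q$ for one $\q$ minimal over $\m S$), which is exactly the part of strong purity you correctly identified as the operative ingredient. The ``intertwining'' diagram you flag as the main obstacle is precisely what the paper writes down, and it poses no real difficulty.

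However, your ``preliminary reduction'' is based on a misconception and would not work as stated. You propose to localize $S$ at a prime $\q$ lying over $\m$ with $\kappa(\q)\cong\C$, so that the residue-field hypothesis in the equivalence (ultra-$F$-injective $\iff$ dense sharply $F$-injective type) applies to $S_\q$. But the only prime of $S$ with residue field $\C$ is $\n$ itself, and $\n$ need not be minimal over $\m S$; if it isn't, $H^i_{\m S_\n}(S_\n)\neq H^i_{\n S_\n}(S_\n)$, and the diagram you need does not close up. The correct observation is that the two directions of the equivalence are asymmetric in their hypotheses: the direction you need for $S_\q$ is dense sharply $F$-injective type $\Rightarrow$ $(S_{\q,p},\ldots)$ sharply $F$-injective for almost all $p$ $\Rightarrow$ sharply ultra-$F$-injective (Propositions \ref{proposition dense F-inj type -> F-inj for a.a.p} and \ref{proposition F-inj for a.a.p -> ultra-F-inj}), and \emph{neither step needs the residue field to be $\C$}. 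The condition $R/\m\cong\C$ is required only for the converse implication (Proposition \ref{proposition ultra-F-inj->dense F-injt type}, whose proof uses $R/\m\cong\C$ to compare socles across the ultraproduct), and that implication is applied only to $R$. So one takes $\q$ minimal among primes lying over $\m$ with $R\to S_\q$ pure (guaranteed by (*), hence by strong purity), accepting that $\kappa(\q)$ is typically a transcendental extension of $\C$, and nothing is lost.

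One smaller inaccuracy: the chain $H^i_\m(R)\hookrightarrow H^i_\m(R)\otimes_R S_\q\cong H^i_{\m S_\q}(S_\q)$ is not valid as stated, since the middle identification requires flatness, not purity. The correct assertion is just that the natural map $H^i_\m(R)\to H^i_{\m S_\q}(S_\q)$ is injective whenever $R\to S_\q$ is pure (a direct-limit/Koszul argument, as in Datta--Murayama), followed by the identification $H^i_{\m S_\q}(S_\q)\cong H^i_{\q S_\q}(S_\q)$ coming from $\q$ being minimal over $\m S$. The desired injectivity is true, but not for the reason you give.
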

	Note that strong purity is strictly stronger than purity and $F$-injectivity does not descend under pure morphisms (see \cite[Example 8.6]{MP} and \cite{Wat97}).
	
	This paper is organized as follows: in the preliminary section, we quickly review basic notions concerning $F$-singularities and ultraproducts. In Section 3, we review the definition of $\operatorname{p}$-standard sequences and apply them to our non-standard setting. In section 4, we introduce ultra-$F$-purity and prove the main theorem. In section 5, we discuss ultra-$F$-injectivity and some result about singularities of dense $F$-injective type.
\begin{acknowledgement}
	The author is partially supported by JSPS KAKENHI Grant Number 22J13150. He would like to thank his supervisor Shunsuke Takagi for his continued support and advice. He would also like to express his gratitude to Takesi Kawasaki and Takumi Murayama for helpful discussion. He would also like to thank Linquan Ma, Karl Schwede and Ziquan Zhuang for valuable comments.
\end{acknowledgement}

\begin{notation}
Throughout this paper, all rings are assumed to be commutative and with unit element.
\end{notation}

%%%%%%%%%%%%%%%%%%%%%%%%%%%%%%%%%%%%%%%%%%%%%%%%%%%%%%%%%%%%%%%%%%%%%%%%%%%%%%
\section{Preliminaries}
%%%%%%%%%%%%%%%%%%%%%%%%%%%%%%%%%%%%%%%%%%%%%%%%%%%%%%%%%%%%%%%%%%%%%%%%%%%%%%

This section provides preliminary results needed for the rest of the paper. 

%%%%%%%%%%%%%%%%%%%%%%%%%%%%%%%%%%%%%%%%%%%%%%%%%%%%%%%%%%%%%%%%%%%%%%%%%%%%%%
\subsection{$F$-pure and $F$-injective singularities}
%%%%%%%%%%%%%%%%%%%%%%%%%%%%%%%%%%%%%%%%%%%%%%%%%%%%%%%%%%%%%%%%%%%%%%%%%%%%%%
This subsection includes the definitions of notions concerning singularities in positive characteristic.

For a ring $R$ of characteristic $p>0$ and a positive integer $e$, we use $F^e_*R$ to denote $R$ regarded as an $R$-module via the $e$-th iterated Frobenius morphism $F^e:R\to R$, i.e., elements in $F^e_* R$ is denoted by $F^e_* r$ where $r$ is an element of $R$, and the $R$-module structure is defined via $a \cdot F^e_* r=F^r_*(a^{p^e}r)$ for any $a\in R$. Under this notation, $F^e:R\to F^e_*R$ is an $R$-module homomorphism.
\begin{defn}
	Let $R$ be a ring of characteristic $p>0$. $R$ is said to be {\it $F$-finite} if the Frobenius morphism $F:R\to F_*R$ is finite.
\end{defn}
\begin{rem}
	Every $F$-finite Noetherian ring is excellent (see \cite{Kun}) and has a dualizing complex (see \cite[Remark 13.6]{Gab04}, \cite[Theorem 10.9]{MP}).
\end{rem}
\begin{defn}[\cite{HR76}, \cite{Schw08}, \cite{Tak04}]
	Let $R$ be a Noetherian ring of characteristic $p>0$, $\ba$ be an ideal of $R$ such that $\ba\cap R^\circ \neq \emptyset$, where $R^\circ$ denotes the set of elements of $R$ not in any minimal prime of $R$, and $t$ be a positive real number.
	\begin{enumerate}
		\item $R$ is said to be {\it $F$-pure} if the Frobenius morphism $F:R\to F_*R$ is pure.
		\item $(R,\ba^t)$ is said to be {\it sharply $F$-pure} if for infinitely many $e\in N$, there exists $f\in \ba^{\lceil t(p^e-1)\rceil}$ such that $\cdot F^e_* f: R\to F^e_*R$ is pure.
	\end{enumerate}
\end{defn}
\begin{rem}
	Schwede gave a refined definition of sharp $F$-purity in \cite{Schw10}, which is equivalent to the above definition if the ring $R$ is local.
\end{rem}
\begin{defn}
	Let $R$ be an $F$-finite reduced ring of characteristic $p>0$, $\ba\subseteq R$ be an ideal such that $\ba \cap R^\circ \neq \emptyset$, and $t$ be a positive real number. We define $\sigma(R,\ba^t)$ as follows:
	\[
	\sigma(R,\ba^t)=\sum_{e\ge1}\sum_{\phi} \phi(F^e_*\ba^{\lceil t(p^e-1)\rceil}), 
	\]
	where $\phi$ runs through all elements of $\Hom_R(F^e_*R,R)$.
\end{defn}
\begin{rem}
	This definition is different from more complicated one in \cite{FST}. $\sigma$ in {\it loc. cit.} was shown to be contained in a non-lc ideal for sufficiently large $p>0$ after reduction modulo $p>0$.
\end{rem}
\begin{prop}\label{proposition of non-f-pure ideal}
	Let $(R,\m)\to (S,\n)$ be a flat local homomorphism between $F$-finite reduced local rings of characteristic $p>0$ such that the induced morphism $R/\m \to S/\m S$ is a separable field extension. Suppose that $\ba\subseteq R$ is an ideal such that $\ba\cap R^\circ\neq \emptyset$, and $t$ is a positive real number. Then we have the following:
	\begin{enumerate}
		\item $(R,\ba^t)$ is sharply $F$-pure if and only if $\sigma(R,\ba^t)=R$. 
		\item For any $\p\in \Spec R$, we have $\sigma(R_\p,(\ba R_\p)^t)=\sigma(R,\ba^t)R_{\p}$.
		\item $\sigma(\widehat{R},(\ba \widehat{R})^t)=\sigma(R,\ba^t)\widehat{R}$.
		\item $\sigma(S,(\ba S)^t)=\sigma(R,\ba^t)S$.
	\end{enumerate}
\end{prop}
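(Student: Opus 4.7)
The plan is to treat all four parts uniformly once two base-change tools are in place. Set $\ba_e := \ba^{\lceil t(p^e-1)\rceil}$. Since $R$ is $F$-finite Noetherian, $F^e_*R$ is finitely presented over $R$, so for any flat $R$-algebra $T$ the canonical map
\[
\Hom_R(F^e_*R, R) \otimes_R T \longrightarrow \Hom_T(F^e_*R \otimes_R T, T)
\]
is an isomorphism. Combining this with the trivial identity $\ba_e T = (\ba T)_e$ and a Frobenius base change identification $F^e_*R \otimes_R T \cong F^e_*T$ will, after summing over $e$ and $\phi$, produce the desired equalities of $\sigma$-ideals.

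For part (1), because $R$ is Noetherian local and $F^e_*R$ is finitely presented, purity of $\cdot F^e_*f:R\to F^e_*R$ is equivalent to its splitting. So $(R,\ba^t)$ is sharply $F$-pure iff for infinitely many $e$ there exist $f\in\ba_e$ and $\phi\in\Hom_R(F^e_*R,R)$ with $\phi(F^e_*f)=1$. The ``infinitely many'' condition is automatic from ``some'': given such $(e,f,\phi)$, the composition $\phi\circ F^e_*\phi:F^{2e}_*R\to R$ sends $F^{2e}_*(f\cdot f^{p^e})$ to $1$ via the identity $F^e_*(r\cdot s^{p^e})=s\cdot F^e_*r$, and $f\cdot f^{p^e}\in\ba_{2e}$ because $\lceil t(p^e-1)\rceil(1+p^e)\ge \lceil t(p^{2e}-1)\rceil$, so sharp $F$-purity propagates to every multiple of $e$. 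The equivalence with $\sigma(R,\ba^t)=R$ then follows because $1\in\sigma(R,\ba^t)$ forces one summand $\phi_i(F^{e_i}_*f_i)$ in any representation of $1$ to be a unit (localness of $R$), and after rescaling $\phi_i$ this gives the desired splitting triple.

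Parts (2) and (3) are direct instances of the common framework with $T = R_\p$ and $T = \widehat{R}$: both are flat over $R$, and the Frobenius base change $F^e_*R\otimes_R T\cong F^e_*T$ is elementary, being trivial for localization and, for completion, a consequence of finite generation of $F^e_*R$ over $R$ together with Artin--Rees comparison of topologies.

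For part (4), the only nontrivial ingredient is the Frobenius base change isomorphism $F^e_*R\otimes_R S\cong F^e_*S$. Here the hypothesis on $R\to S$ is essential: flatness together with $R/\m\to S/\m S$ being a separable field extension (so in particular $\m S = \n$) makes $R\to S$ a regular local homomorphism, and by the Radu--Andr\'e theorem (in its $F$-finite version) this is equivalent to the Frobenius base change being an isomorphism for every $e$. Extracting this Frobenius base change from the separability hypothesis is the main obstacle of the proof; once granted, the $\Hom$ base change argument used for (2) and (3) immediately yields (4).
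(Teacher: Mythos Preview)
Your argument is correct and considerably more detailed than the paper's own, which simply cites \cite[Proposition~14.10]{FST} and \cite[Lemma~1.5]{ST15} without spelling anything out; the strategy you describe---flat base change for $\Hom$ together with the relative Frobenius isomorphism $F^e_*R\otimes_R T\cong F^e_*T$---is precisely what those references carry out.

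One small correction on part~(4): the Radu--Andr\'e theorem says that a flat map of Noetherian $\F_p$-algebras is regular if and only if the relative Frobenius is \emph{flat}, not that it is an isomorphism; for a general regular map (e.g.\ $R=k$, $S=k[x]$) the ranks of $F^e_*R\otimes_R S$ and $F^e_*S$ differ. What you need, and what does hold under the stated hypotheses, is that when the closed fiber is a $0$-dimensional separable field extension the relative Frobenius is an isomorphism. This is more elementary than Radu--Andr\'e: since both sides are finite $S$-modules, by Nakayama it suffices to check surjectivity modulo $\n=\m S$, and after unwinding this reduces to the identity $l=k\cdot l^{p}$ for $k=R/\m$ and $l=S/\n$, which is exactly the linear-disjointness characterization of separability. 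With that adjustment your proof is complete.
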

\begin{proof}
	The conclusion follows from an argument similar to \cite[Proposition 14.10]{FST} and \cite[Lemma 1.5]{ST15}.
\end{proof}
\begin{defn}[{\cite{Fed83}, \cite[Definition 2.8]{STV}}]\label{definition F-inj}
	Let $(R,\m)$ be a Noetherian local ring of characteristic $p>0$, $\ba$ be an ideal of $R$ such that $\ba\cap R^\circ\neq \emptyset$ and $t$ be a positive real number.
	\begin{enumerate}
		\item $R$ is said to be {\it $F$-injective} if for any $i\in \Z$, $F:\H_\m^i(R)\to \H_\m^i(F_*R)$ is injective.
		\item $(R,\ba^t)$ is said to be {\it sharply $F$-injective} if for any $i\in \Z$ and a nonzero element $\eta\in \H_\m^i(R)$, for infinitely many $e\in \N$, there exists $f\in \ba^{\lceil t(p^e-1) \rceil}$ such that the image of $\eta$ under $\cdot F^e_*f:\H_\m^i(R)\to \H_\m^i(F^e_*R)$ is nonzero.		
	\end{enumerate}
\end{defn}
\begin{defn}\label{definition of F-injective ideal}
	With notation as in Definition \ref{definition F-inj}, suppose that $R$ is $F$-finite and $\omega_R^{\bullet}$ is the normalized dualizing complex of $R$. For $i\in \Z$, $\sigma_{\text{{$F$}-inj}}^{(i)}(\omega^{\bullet}_R,\ba^t)\subseteq h^{-i}\omega_R^{\bullet}$ is defined to be
	\[
	\sum_{e\ge 1} \sum_{f\in \ba^{\lceil t(p^e-1) \rceil}}\operatorname{Im} \left(h^{-i}\RHom_R(F^e_*R,\omega_R^{\bullet})\to h^{-i}\RHom_R(R,\omega_R^{\bullet})\right),
	\]
	where the above morphisms are induced by $\cdot F^e_*f:R\to F^e_*R$.
\end{defn}
\begin{prop} \label{proposition of non-F-injective ideal}
	Let $(R,\m)\to (S,\n)$ be a flat local homomorphism between $F$-finite reduced local rings of characteristic $p>0$ such that the induced morphism $R/\m \to S/\m S$ is a separable field extension. Suppose that $\ba\subseteq R$ is an ideal such that $\ba\cap R^\circ\neq \emptyset$, and $t$ is a positive real number. Then we have the following:
	\begin{enumerate}
		\item $(R,\ba^t)$ is sharply $F$-injective if and only if for all $i$, $\sigma_{\text{{$F$}-inj}}^{(i)}(\omega^{\bullet}_R,\ba^t)= h^{-i}\omega_R^{\bullet}$.
		\item For any $i\in \Z$ and $\p\in \Spec R$, we have $\sigma_{\text{{$F$}-inj}}^{(i)}(\omega^{\bullet}_{R_\p},(\ba R_\p)^t)=\sigma_{\text{{$F$}-inj}}^{(i)}(\omega^{\bullet}_R,\ba^t)_{\p}$.
		\item For any $i\in \Z$, $\sigma_{\text{{$F$}-inj}}^{(i)}(\omega^{\bullet}_{\widehat{R}},(\ba\widehat{R})^t)=\sigma_{\text{{$F$}-inj}}^{(i)}(\omega^{\bullet}_R,\ba^t)\otimes_R \widehat{R}$.
		\item For any $i\in \Z$, $\sigma_{\text{{$F$}-inj}}^{(i)}(\omega^{\bullet}_S,(\ba S)^t)=\sigma_{\text{{$F$}-inj}}^{(i)}(\omega^{\bullet}_R,\ba^t)\otimes_R S\subseteq h^{-i}\omega_S^\bullet$.
	\end{enumerate}
\end{prop}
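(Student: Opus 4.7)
The plan is to mirror the proof of Proposition \ref{proposition of non-f-pure ideal}, replacing the splittings that appear in the definition of the non-$F$-pure ideal with the local-cohomology maps $\cdot F^e_*f:\H_\m^i(R)\to\H_\m^i(F^e_*R)$ via Matlis duality. Since $R$ is $F$-finite and local with normalized dualizing complex $\omega_R^\bullet$, local duality provides a natural identification
\[
\Hom_R(\H_\m^i(F^e_*R),E_R)\;\cong\;h^{-i}\RHom_R(F^e_*R,\omega_R^\bullet),
\]
where $E_R$ is the injective hull of the residue field; under this identification the Matlis dual of the Frobenius-twisted map $\cdot F^e_*f$ is exactly the $R$-linear map
\[
h^{-i}\RHom_R(F^e_*R,\omega_R^\bullet)\longrightarrow h^{-i}\omega_R^\bullet
\]
that defines the summands of $\sigma_{\text{{$F$}-inj}}^{(i)}(\omega_R^\bullet,\ba^t)$ in Definition \ref{definition of F-injective ideal}.

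For part (1), the Matlis pairing between the Artinian module $\H_\m^i(R)$ and the finitely generated module $h^{-i}\omega_R^\bullet$ is perfect, so a nonzero $\eta\in\H_\m^i(R)$ is killed by $\cdot F^e_*f$ for every $e$ and every $f\in\ba^{\lceil t(p^e-1)\rceil}$ if and only if it annihilates $\sigma_{\text{{$F$}-inj}}^{(i)}(\omega_R^\bullet,\ba^t)$ under the pairing. Consequently, $\sigma_{\text{{$F$}-inj}}^{(i)}(\omega_R^\bullet,\ba^t)=h^{-i}\omega_R^\bullet$ for every $i$ is equivalent to the assertion that for every nonzero $\eta$ in some $\H_\m^i(R)$ there exist some $e$ and some $f$ with $\cdot F^e_*f(\eta)\neq 0$. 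To match this with sharp $F$-injectivity, which demands \emph{infinitely many} such $e$, one runs the Frobenius-iteration bootstrap parallel to \cite[Proposition 14.10]{FST}: given $f_0\cdot F^{e_0}_*\eta\neq 0$ and $e=e_0+e'$, take $f=f_0^{p^{e'}}g$ with $g$ in a suitable bounded power of $\ba$ so that $f\in\ba^{\lceil t(p^e-1)\rceil}$, and note that $\cdot F^e_*f(\eta)=g\cdot F^{e'}(f_0\cdot F^{e_0}_*\eta)$, which one arranges to be nonzero for infinitely many $e'$.

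Parts (2)--(4) reduce to flat-base-change bookkeeping. For each flat $R$-algebra $R'\in\{R_\p,\widehat R,S\}$ one has flat base change for $\RHom$,
\[
\RHom_R(F^e_*R,\omega_R^\bullet)\otimes_R^L R'\;\simeq\;\RHom_{R'}(F^e_*R\otimes_R R',\omega_{R'}^\bullet),
\]
together with the compatibilities $F^e_*R_\p=(F^e_*R)_\p$, $F^e_*\widehat R=F^e_*R\otimes_R\widehat R$ (from $F$-finiteness), and $F^e_*S=F^e_*R\otimes_R S$ (the Radu--Andr\'e theorem, using separability of the residue-field extension). Because $h^{-i}\omega_R^\bullet$ is a Noetherian $R$-module, the sum defining $\sigma_{\text{{$F$}-inj}}^{(i)}$ is attained at a finite stage, so it commutes with $-\otimes_R R'$, yielding the desired identities. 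The main obstacle I anticipate is the Frobenius-iteration bootstrap in part (1) — in particular, controlling $F^{e'}$ on the intermediate class $f_0\cdot F^{e_0}_*\eta$ while keeping the exponent of $\ba$ correct — whereas the base-change portions of parts (2)--(4) are routine once the Matlis-dual description of $\sigma_{\text{{$F$}-inj}}^{(i)}$ is in place.
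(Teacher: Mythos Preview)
Your proposal is correct and follows the same route as the paper, which merely refers back to the argument for Proposition~\ref{proposition of non-f-pure ideal} (i.e., \cite[Proposition~14.10]{FST} and \cite[Lemma~1.5]{ST15}) together with \cite[Proposition~4.3]{Schw09}, after observing that $\omega_S^\bullet\cong\omega_R^\bullet\otimes_R S$ since $\m S=\n$. One small correction to your bootstrap sketch: the auxiliary element $g$ must lie in $\ba^{\lceil t(p^{e'}-1)\rceil}$ rather than a bounded power of $\ba$, and it is produced by applying the hypothesis recursively to $\eta'=f_0\cdot F^{e_0}_*\eta$.
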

\begin{proof}
	The conclusion follows from an argument similar to Proposition \ref{proposition of non-f-pure ideal}. Note that since the morphism $R\to S$ is local flat and $\m S=\n$, $\omega_S^\bullet=\omega_R^\bullet\otimes_R S$.
	We also refer the reader to \cite[Proposition 4.3]{Schw09} for details.
\end{proof}
 We explain the definition of models and reductions modulo $p>0$.
\begin{defn}
	Let $R$ be a ring of finite type over $\C$, $\ba$ be an ideal of $R$ and $\p$ be a prime ideal of $R$.
	\begin{enumerate}
		\item A quadruple $(A,R_A,\ba_A,\p_A)$ is said to be a {\it model} of the triple $(R,\ba,\p)$ if the following conditions hold:
		\begin{enumerate}
			\item $A$ is a finitely generated $\Z$-subalgebra of $\C$.
			\item $R_A$ is a finitely generated $A$-algebra such that $R_A\otimes_A\C\cong R$.
			\item $\ba_A$ and $\p_A$ are ideals of $R_A$ such that $\ba=\ba_A R$ and $\p=\p_A R$.
		\end{enumerate}
		\item Let $(A,R_A,\ba_A,\p_A)$ be a model of the triple $(R,\ba,\p)$. For a maximal ideal $\mu$ of $A$, a quadruple $(\kappa(\mu), R_\mu, \ba_\mu,\p_\mu)$ is said to be a {\it reduction modulo $p>0$} if the following conditions hold:
		\begin{enumerate}
			\item $\kappa(\mu)=A/\mu$.
			\item $R_\mu=R_A\otimes_A \kappa(\mu)$.
			\item $\ba_\mu=\ba_A R_\mu$, $\p_\mu=\p_A R_\mu$.
		\end{enumerate}
	\end{enumerate}
\end{defn}
\begin{defn} \label{definition of dense $F$-pure type}
	Let $R$ be a ring of finite type over $\C$, $\ba$ be an ideal such that $\ba \cap R^\circ\neq \emptyset$, $\p$ be a prime ideal of $R$ and $t$ be a positive real number. A pair $(R_\p, (\ba R_\p)^t)$ is said to be of {\it dense sharply $F$-pure (resp. dense sharply $F$-injective) type} if there exists a subset $D$ of $\operatorname{Spm} A$, the set of all maximal ideals of $A$, such that $D$ is a dense subset of $\Spec A$ and, for any $\mu\in D$, $\p_\mu \in \Spec R_\mu$ and the pair $((R_\mu)_{\p_\mu},(\ba_\mu (R_\mu)_{\p_\mu})^t)$ is sharply $F$-pure (resp. sharply $F$-injective).
	When $\ba=R$, we simply say that $R_\p$ is of {\it dense $F$-pure (resp. dense $F$-injective) type} if $(R_\p, R_\p^t)$ is of sharply $F$-pure (resp. sharply $F$-injective) type.
\end{defn}
\begin{rem}
	\begin{enumerate}
		\item This definition depends only on $R_\p$, $\ba R_\p$ and $t$ and is independent of the choice of models. We refer the reader to \cite[Remark 2.5]{MSr11} for details.
		\item Hara and Watanabe \cite{HW} showed that singularities of dense $F$-pure type are log canonical if the ring is $\Q$-Gorenstein. Schwede \cite{Schw09} showed that if a ring of finite type over a field of characteristic zero has dense $F$-injective type, then it has Du Bois singularities. In \cite{MSS}, the result is generalized to the case of rings essentially of finite type over a field of characteristic zero.
	\end{enumerate}
\end{rem}
%%%%%%%%%%%%%%%%%%%%%%%%%%%%%%%%%%%%%%%%%%%%%%%%%%%%%%%%%%%%%%%%%%%%%%%%%%%%%%
\subsection{Ultraproducts}
%%%%%%%%%%%%%%%%%%%%%%%%%%%%%%%%%%%%%%%%%%%%%%%%%%%%%%%%%%%%%%%%%%%%%%%%%%%%%%
In this subsection, we quickly review the theory of ultraproducts in commutative algebra. We refer the reader to \cite{Sch03} and \cite[Section 3]{Yam} for details.
In this paper, $\mathcal{P}$ denotes the set of prime numbers.
\begin{defn}
	A non-empty subset $\mathcal{F}$ of the power set of $\mathcal{P}$ is said to be a {\it non-principal ultrafilter} if the following four conditions hold.
	\begin{enumerate}
		\item If $A, B \in \mathcal{F}$, then $A\cap B\in \mathcal{F}$.
		\item If $A\in \mathcal{F}$ and $A\subseteq B \subseteq \mathcal{P}$, then $B\in \mathcal{F}$.
		\item For any $A\subseteq \mathcal{P}$, we have $A\in \mathcal{F}$ or $\mathcal{P}\setminus A \in \mathcal{F}$.
		\item For any finite subset $A\subseteq \mathcal{P}$, we have $A\notin \mathcal{F}$.
	\end{enumerate}
\end{defn}
%\begin{rem}
	%We use $2^{\mathcal{P}}$ to denote the power set of $\mathcal{P}$. A map $\mu:2^{\mathcal{P}}\to \subseteq \R_{\ge 0}\cup\{\infty\}$ is said to be a {\it two-valued finitely additive measure} on $\mathcal{P}$ if the following two conditions hold:
%	\begin{enumerate}
%		\item For any $A\subseteq \mathcal{P}$, $\mu(A)=0$ or $\mu(A)=1$.
%		\item For any $A, B\subseteq \mathcal{P}$ such that $A\cap B=\emptyset$, $\mu(A\cup B)=\mu(A)+\mu(B)$.
%	\end{enumerate}
% 	Let $\mathscr{M}$ be the set of all two-valued finitely additive measures on $\mathcal{P}$ such that for any finite subset $A\subseteq \mathcal{P}$, $\mu(A)=0$ and $\mu(\mathcal{P})=1$ and let $\mathscr{F}$ be the set of all non-principal ultrafilter on $\mathcal{P}$. Then we have a bijection between $\mathscr{M}$ and $\mathscr{F}$, i.e., 
 %	\[
 %	 	\xymatrix@C=15pt@R=3pt{
% 	 	\mathscr{M} \ar@{<->}[r] & \mathscr{F} \\
% 	 	\mu \ar@{|->}[r] & \{A\subseteq \mathcal{P}|\mu(A)=1\} \\
 %	 	\left(
 %	 	A\mapsto 
 %	 	{\left\{
 %		\begin{matrix}
 %	 		1 & (A\in \mathcal{F}) \\
 %	 		0 & (A \notin \mathcal{F})
 %	 	\end{matrix}
 %	 	\right.}
% 	 	 \right)& \mathcal{F} \ar@{|->}[l]
  %		}.
% 	\]
%\end{rem}
\begin{prop}\label{existence of ultrafilter}
	For any infinite subset $A$ of $\mathcal{P}$, there exists a non-principal ultrafilter $\mathcal{F}$ on $\mathcal{P}$ such that $A\in \mathcal{F}$.
\end{prop}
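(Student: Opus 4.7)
The plan is to build a filter on $\mathcal{P}$ from the given infinite set $A$ together with the cofinite sets, and then enlarge it to an ultrafilter via Zorn's lemma. This is the standard ultrafilter lemma, with the extra observation that starting from the cofinite filter automatically forces the resulting ultrafilter to be non-principal.

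First I would define the collection
\[
	\mathcal{G}=\{A\}\cup \{\mathcal{P}\setminus F \mid F\subseteq \mathcal{P}\text{ finite}\},
\]
and verify that $\mathcal{G}$ has the finite intersection property: any finite intersection of cofinite subsets of $\mathcal{P}$ is again cofinite, and intersecting such a set with $A$ gives $A$ minus a finite set, which is non-empty because $A$ is infinite. Let $\mathcal{F}_0$ be the filter generated by $\mathcal{G}$, i.e.\ the collection of all supersets of finite intersections of elements of $\mathcal{G}$; this is a proper filter containing $A$ and every cofinite subset of $\mathcal{P}$.

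Next I would apply Zorn's lemma to the poset $\Sigma$ of all proper filters on $\mathcal{P}$ that contain $\mathcal{F}_0$, ordered by inclusion. Given a chain in $\Sigma$, its union is again a proper filter (properness survives because no element of the chain contains $\emptyset$, and the filter axioms are preserved under directed unions), so Zorn yields a maximal element $\mathcal{F}$. To see $\mathcal{F}$ is an ultrafilter, I would argue by contradiction: if some $X\subseteq \mathcal{P}$ satisfied $X\notin \mathcal{F}$ and $\mathcal{P}\setminus X\notin \mathcal{F}$, then one of the two sets, say $X$, would have non-empty intersection with every element of $\mathcal{F}$ (otherwise some $Y\in \mathcal{F}$ satisfies $Y\cap X=\emptyset$, hence $Y\subseteq \mathcal{P}\setminus X$, forcing $\mathcal{P}\setminus X\in \mathcal{F}$); then the filter generated by $\mathcal{F}\cup\{X\}$ is a proper filter strictly containing $\mathcal{F}$, contradicting maximality. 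This gives property (3), while properties (1) and (2) are built into the definition of a filter.

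Finally, property (4) is immediate: since $\mathcal{F}\supseteq \mathcal{F}_0$ contains every cofinite set, no finite set $F\subseteq \mathcal{P}$ can lie in $\mathcal{F}$, for otherwise $F\cap(\mathcal{P}\setminus F)=\emptyset\in\mathcal{F}$. I do not anticipate any real obstacle here; the only non-trivial input is the axiom of choice, which is already implicit throughout the paper via Zorn's lemma. The argument is essentially the classical proof of the ultrafilter lemma, tailored to ensure both $A\in\mathcal{F}$ and non-principality by seeding with $\mathcal{G}$.
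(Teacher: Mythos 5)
Your proof is correct and is precisely the argument the paper compresses into ``the conclusion follows from Zorn's lemma'': seed a filter with $A$ and the cofinite sets, apply Zorn to get a maximal proper filter, and observe that maximality forces the ultrafilter condition while the cofinite seed forces non-principality. Nothing to add.
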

\begin{proof}
	The conclusion follows from Zorn's lemma.
\end{proof}
From now on, we fix a non-principal ultrafilter $\mathcal{F}$ on $\mathcal{P}$.
\begin{defn}
 Let $\phi$ be a property on $\mathcal{P}$. We say $\phi(p)$ holds {\it for almost all $p$} if the set $\{p\in\mathcal{P}|\text{$\phi(p)$ holds}\}$ belongs to $\mathcal{F}$.
\end{defn}
\begin{defn}
	Let $(A_p)$ be a family of non-empty sets indexed by $\mathcal{P}$. We define the {\it ultraproduct} of $(A_p)$ by
	\[
		A_\infty =\ulim_p A_p :=\prod_{p}A_p/\sim,
	\]
	where $(a_p)\sim (b_p)$ if and only if  $a_p=b_p$ for almost all $p$.
	For $(a_p)\in \prod_p A_p$, we use $\ulim_p a_p$ to denote the equivalent class of $(a_p)$.
\end{defn}
\begin{eg}
	\begin{enumerate}
		\item If $(A_p)$ is a family of rings (resp. fields), then $A_\infty$ is a ring (resp. field).
		\item If $(A_p)$ is a family of rings and $(M_p)$ is a family of $A_p$-modules, then $M_\infty$ is an $A_\infty$-module.
		\item Let $A_p=\N$ for all $p$. We use ${^*\N}$ to denote $\ulim_p A_p$. Then ${^*\N}$ is an ordered semiring and a non-standard model of Peano arithmetic. We regard $\N$ as a subsemiring by the diagonal embedding.
		\item Similarly, we can define an ordered field ${^*\R}$ containing ${^*\N}$. ${^* \R}$ is called the system of hyperreals, which is a fundamental notion in non-standard analysis. We use $\pi$ to denote $\ulim_p p\in {^*\N}$ and, for any $\epsilon=\ulim e_p \in {^* \R}$, we define $\lceil \epsilon \rceil=\ulim_p \lceil e_p \rceil \in {^*\N}$.
	\end{enumerate}
\end{eg}
 Let $\bar{\F_p}$ be an algebraic closure of $\F_p$. There exists a non-canonical isomorphism $\ulim_p \bar{\F_p}\cong \C$. We fix this isomorphism, and let $R$ be a local ring essentially of finite type over $\C$. Then we can construct an approximation of $R$ and the non-standard hull of $R$ (see \cite[Subsection 4.3]{Sch03} and \cite[Definition 3.35]{Yam}). We omit the formal definitions here and provide instead a brief overview on their fundamental properties.
 \begin{enumerate}
 	\item An approximation of $R$ is a family $(R_p)$ of local rings essentially of finite type over $\bar{\F_p}$.
 	\item The non-standard hull $R_\infty$ is the ultraproduct of $(R_p)$. We remark that $R_\infty$ is not Noetherian in general. 
 	\item We have a natural faithfully flat inclusion $R\hookrightarrow R_\infty$.
 	\item An approximation of an element $x$ of $R$ (or $R_\infty$) is a sequence $(x_p)$ of elements $x_p\in R_p$ such that $\ulim_p x_p=x$.
 \end{enumerate}
Let $S$ be a finitely generated $R$-algebra. Then we can construct an $R$-approximation of $S$ and the (relative) $R$-hull of $S$. We refer the reader to \cite[Section 2]{Sch08} and \cite[Subsection 3.3]{Yam} for details. Here, we explain some basic properties of the $R$-hull.
\begin{enumerate}
	\item An $R$-approximation of $S$ is a family $(S_p)$ of finitely generated $R_p$-algebras for almost all $p$.
	\item The relative $R$-hull $S_\infty$ of $S$ is the ultraproduct of $(S_p)$.
	\item We have a natural faithfully flat inclusion $S\hookrightarrow S_\infty$.
\end{enumerate}
For any ideal $\ba$ of $R$, we define an approximation of $\ba$ by using an approximation of a system of generators of $\ba$ as in \cite[Section 4]{Sch03} and \cite[Definition 3.39]{Yam}.
For an $R$-module $M$ and an $S$-module $N$, we define an approximation of $M$ and an approximation of $N$ as in \cite[Subsection 2.5]{Sch05}, \cite[Definition 3.41]{Yam} and \cite[Definition 3.60]{Yam}. 
For any $\R$-divisor $\Delta$ on $\Spec R$, we define an approximation of $\Delta$ as in \cite[Definition 3.45]{Yam}.
We also define an approximation of varieties over $\C$. The reader is referred to \cite{Sch05} and \cite[Subsection 3.3]{Yam} for more details.
Here we collect other useful definitions and introduce a new notion, the ultra-perfect closure.
\begin{defn}\label{definiton of ultra-Frobenii and ultra-perfect closures}
	Let $R$ be a reduced local ring essentially of finite type over $\C$ and $\epsilon =\ulim_p e_p$ be a non-standard integer (i.e. an element of ${^*\N}$).
	\begin{enumerate}
		\item  $F^\epsilon: R\to R_\infty$ is the morphism $R\to R_\infty; x\mapsto \ulim_p x_p^{p^{e_p}}$. We use $F^\epsilon_* R_\infty$ to denote the $R$-module such that $F^\epsilon_* R_\infty$ is isomorphic to $R_\infty$ as an abelian group but, for any $a\in R$ and $b\in R_\infty$, the scalar multiplication on $F^\epsilon_*R_\infty$ is defined by $a\cdot F^\epsilon_* b=F^\epsilon_*(F^\epsilon (a) b)$.
		\item  The {\it ultra-perfect closure} of $R^{\operatorname{upf}}$ is defined to be $\ulim_p R_p^{1/p^\infty}$.
	\end{enumerate}
\end{defn}
\begin{prop}\label{proposition ultra-perfect closure limit}
	Let $R$ be a reduced local ring essentially of finite type over $\C$. Then we have
	\[
	R^{\upf}\cong \varinjlim_{\epsilon\in {^*\N}} F^{\epsilon}_{*} R_\infty.
	\].
\end{prop}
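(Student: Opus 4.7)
The plan is to construct explicit mutually inverse $R$-module maps between $R^{\upf}$ and the colimit $\varinjlim_{\epsilon\in{^*\N}} F^\epsilon_* R_\infty$, mimicking the classical identification $R_p^{1/p^\infty}=\varinjlim_e F^e_* R_p$ level by level and then ultraproducting.

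First, I would set up the directed system. The index set ${^*\N}$ is directed under its natural order (since $\epsilon+\delta$ dominates both $\epsilon$ and $\delta$). For $\epsilon=\ulim_p e_p\le \delta=\ulim_p d_p$, one extends $F^{\delta-\epsilon}$ to an endomorphism of $R_\infty$ by $\ulim_p b_p\mapsto \ulim_p b_p^{p^{d_p-e_p}}$ and defines the transition map $F^\epsilon_* R_\infty\to F^\delta_* R_\infty$ by $F^\epsilon_* b\mapsto F^\delta_* F^{\delta-\epsilon}(b)$. The $R$-linearity is a direct computation using $F^{\delta-\epsilon}\circ F^\epsilon=F^\delta$, and the relations $F^{\eta-\epsilon}\circ F^{\delta-\eta}=F^{\delta-\epsilon}$ give functoriality.

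Next, for each $\epsilon=\ulim_p e_p$ I would define
\[
\phi_\epsilon\colon F^\epsilon_* R_\infty \longrightarrow R^{\upf},\qquad F^\epsilon_*(\ulim_p b_p)\longmapsto \ulim_p b_p^{1/p^{e_p}}.
\]
Well-definedness (independence of the representative $(b_p)$ and of the choice of sequence $(e_p)$ representing $\epsilon$) is an immediate consequence of \L o\'s's theorem. The $R$-linearity unfolds to $\ulim_p(a_p^{p^{e_p}} b_p)^{1/p^{e_p}}=\ulim_p a_p b_p^{1/p^{e_p}}$, matching the scalar action on $R^{\upf}$. Compatibility with the transition maps is the calculation
\[
\phi_\delta\bigl(F^\delta_* F^{\delta-\epsilon}(b)\bigr)=\ulim_p \bigl(b_p^{p^{d_p-e_p}}\bigr)^{1/p^{d_p}}=\ulim_p b_p^{1/p^{e_p}}=\phi_\epsilon(F^\epsilon_* b).
\]
Thus the $\phi_\epsilon$'s assemble into a map $\Phi\colon\varinjlim_\epsilon F^\epsilon_* R_\infty\to R^{\upf}$.

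For surjectivity, any $s=\ulim_p s_p\in R^{\upf}$ has each $s_p\in R_p^{1/p^{e_p}}$ for some $e_p\in\N$; setting $\epsilon=\ulim_p e_p$ and $b=\ulim_p s_p^{p^{e_p}}\in R_\infty$ gives $\phi_\epsilon(F^\epsilon_* b)=s$. For injectivity, since $R$ is reduced, $R_p$ is reduced for almost all $p$, so $R_p\hookrightarrow R_p^{1/p^{e_p}}$ is injective; thus $\phi_\epsilon(F^\epsilon_* b)=0$ forces $b_p=0$ for almost all $p$ and hence $b=0$ in $R_\infty$. Therefore every $\phi_\epsilon$ is already injective, so $\Phi$ is. The only genuinely delicate point is the bookkeeping of the transition maps at non-standard indices and verifying that all constructions are independent of the chosen approximations; once \L o\'s's theorem is invoked, these are routine but must be carried out carefully.
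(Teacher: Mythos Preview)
Your proof is correct and follows essentially the same approach as the paper: both construct compatible injections $F^\epsilon_* R_\infty \hookrightarrow R^{\upf}$ by ultraproducting the classical identifications $F^{e_p}_* R_p \cong R_p^{1/p^{e_p}} \subseteq R_p^{1/p^\infty}$, and both prove surjectivity by the same argument of choosing for each $p$ an exponent $e_p$ with $x_p\in R_p^{1/p^{e_p}}$. The only difference is presentational: you write the maps explicitly as $F^\epsilon_*(\ulim_p b_p)\mapsto \ulim_p b_p^{1/p^{e_p}}$ and check $R$-linearity and compatibility by direct computation, whereas the paper simply records the inclusions $F^{m_p}_*R_p\hookrightarrow F^{n_p}_*R_p\hookrightarrow R_p^{1/p^\infty}$ and takes their ultraproduct.
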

\begin{proof}
	Take $\mu \le \nu \in {^*\N}$ and let $\nu=\ulim_p n_p$, $\mu=\ulim_p m_p$. Since $m_p \le n_p$ for almost all $p$, we have $F^{m_p}R_p\hookrightarrow F^{n_p}R_p \hookrightarrow R_p^{1/p^{\infty}}$ for almost all $p$. Hence, we have
	\[
	F^\mu_*R_\infty \hookrightarrow F^\nu_*R_\infty \hookrightarrow R^{\upf}.
	\]
	Therefore, we can define $\varinjlim_{\epsilon\in {^*\N}} F^{\epsilon}_{*} R_\infty$ and we have $\varinjlim_{\epsilon\in {^*\N}} F^{\epsilon}_{*} R_\infty\hookrightarrow R^{\upf}$. In order to prove the surjectivity, take any $x=\ulim x_p\in R^{\upf}$. For any $p$, there exists $e_p\in \N$ such that $x_p\in F^{e_p}_*R_p$. Let $\epsilon=\ulim e_p\in {^*\N}$. Then we have $x\in \ulim_p F^{e_p}_* R_p\cong F^{\epsilon}_*R_\infty$.
\end{proof}
\begin{defn}[{\cite[Notation 5.1]{Yam22a}}]
	With notation as above, for any $\epsilon=\ulim_p e_p\in {^*\N}$ and an ideal $\ba$ of $R$, $\ba^{\epsilon}$ is defined to be
	\[
		\ulim_p \ba_p^{e_p}.
	\]
\end{defn}
\begin{rem}
	For any $n\in \N$, if $\nu$ is the image of $n$ under the diagonal embedding $\N \hookrightarrow {^*\N}$, then we have $\ba^\nu=\ba^n R_\infty$.
\end{rem}
Following Schoutens \cite[Section 5]{Sch08}, we can define local ultracohomologies. Let $R$ be a local ring essentially of finite type over $\C$ of dimension $d$ and $x_1,\dots,x_d$ be a system of parameters for $R$. Suppose that $M_p$ is an $R_p$-module for almost all $p$ and $M_\infty=\ulim_p M_p$. For $n\in \N$, $1\le i_1<\dots<i_n\le d$, there exists a natural morphism
\[
	(M_\infty)_{x_{i_1}\cdots x_{i_d}}\to \ulim_p (M_p)_{x_{i_1,p}\dots x_{i_d,p}}.
\]
Considering the \v{C}ech complexes, we have commutative diagrams
\[
	\xymatrix{
		\bigoplus_{1\le i_1<\cdots<i_n \le d}(M_\infty)_{x_{i_1}\dots x_{i_n}} \ar[r] \ar[d]& \bigoplus_{1\le j_1<\cdots<j_{n+1}\le d} (M_\infty)_{x_{j_1}\dots x_{j_{n+1}}} \ar[d]\\
		\bigoplus_{1\le i_1<\cdots<i_n \le d}\ulim_p (M_p)_{x_{i_1,p}\dots x_{i_n,p}} \ar[r] & \bigoplus_{1\le j_1<\cdots<j_{n+1}\le d}\ulim_p (M_p)_{x_{j_1,p}\dots x_{j_{n+1},p}}
	}.
\]
Hence, we have a natural morphism
\[
	\H_{\m}^i(M_\infty)\to \ulim_p \H_{\m_p}^i(M_p).
\]
For an element $\eta$ of $\H_{\m}^i(M_\infty)$, a family $(\eta_p)$ of elements of $H_{\m_p}^i(M_p)$ is said to be an {\it approximation} of $\eta$ if $\ulim_p \eta_p$ is equal to the image of $\eta$ under the above natural morphism.
In the next section, we show the injectivity of this map in some situations, which plays an important role in later sections.
%%%%%%%%%%%%%%%%%%%%%%%%%%%%%%%%%%%%%%%%%%%%%%%%%%%%%%%%%%%%%%%%%%%%%%%%%%%%%%%%%%%%
\section{$\operatorname{p}$-standard sequences and ultraproducts}
%%%%%%%%%%%%%%%%%%%%%%%%%%%%%%%%%%%%%%%%%%%%%%%%%%%%%%%%%%%%%%%%%%%%%%%%%%%%%%%%%%%%
	In this section, we define $\operatorname{p}$-standard sequences following \cite{Kaw} and apply them to the non-standard setting.
\begin{defn}[{\cite[Definition 2.2]{Kaw}}]
	Let $R$ be a Noetherian ring, $M$ be an $R$-module and $d$ be a positive integer. A sequence $x_1,\dots,x_d$ in $R$ is said to be a {\it $\operatorname{p}$-standard sequence} on $M$ if 
	\[
		(x_\lambda^{n_\lambda}|\lambda\in \Lambda)M:x_{i}^{n_i}x_{j}^{n_j}=(x_\lambda^{n_\lambda}|\lambda\in \Lambda)M:x_j^{n_j}
	\]
	for any positive integers $n_1,\dots,n_d$, any subset $\Lambda\subsetneq \{1,...,d\}$ and $i,j\in \{1,\dots,d\}\setminus \Lambda$ such that $i\le j$.
\end{defn}
 Given a Noetherian local ring $(R,\m)$, for a finitely generated $R$-module $M$, the ideal $\ba(M)$ is defined to be
 \[
 	\ba(M)=\prod_{0\le i<\dim M}\Ann_R \H_\m^i(R).
 \]
\begin{defn}[{\cite[Definition 3.1]{Kaw}}]
	Let $R$ be a Noetherian local ring with a dualizing complex, $M$ be a finitely generated $R$-module and $d=\dim M$. A system of parameters $x_1,\dots,x_d$ for $M$ is said to be a {\it $\operatorname{p}$-standard system of parameters} for $M$ if
		\[
			x_i\in \ba(M/(x_{i+1},\dots,x_d)M)
		\]
		for $1\le i\le d$.
\end{defn}
The following are important properties of $\operatorname{p}$-standard systems of parameters.
\begin{prop}\label{proposition of p-standard s.o.p. -> p-standard seq}
	Suppose that $R$ is a Noetherian local ring with a dualizing complex and $M$ is a finitely generated $R$-module. 
	\begin{enumerate}
		\item ({\cite[p. 482]{Cuo}}) There exists a $\operatorname{p}$-standard system of parameters for $M$.
		\item ({\cite[Theorem 3.3]{Kaw}}) A $\operatorname{p}$-standard system of  parameters for $M$ is a $\operatorname{p}$-standard sequence on $M$.
	\end{enumerate}
\end{prop}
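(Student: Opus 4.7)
The plan is to split the two assertions and handle them by different techniques: a downward-inductive construction for (1), and a local-cohomology analysis for (2).

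For (1), I would build $x_d, x_{d-1}, \ldots, x_1$ recursively. Once $x_{i+1},\ldots,x_d$ have been chosen so that they extend to a system of parameters for $M$, set $N_i := M/(x_{i+1},\ldots,x_d)M$, which has dimension $i$. The task is to find $x_i \in \ba(N_i)$ that also lies outside every minimal prime $\p$ of $N_i$ with $\dim R/\p = i$. The key input is the dimension inequality $\dim\Supp \H^j_\m(N_i) \le j$ for every $j$, which is available because $R$ admits a dualizing complex. For each $j<i$ this prevents $\Ann_R \H^j_\m(N_i)$ from being contained in any top-dimensional minimal prime of $N_i$; hence the same holds for the product $\ba(N_i)$, and prime avoidance produces the desired $x_i$.

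For (2), I would begin from the observation that $x_i \in \ba(M/(x_{i+1},\ldots,x_d)M)$ means by definition that $x_i$ annihilates $\H^k_\m(M/(x_{i+1},\ldots,x_d)M)$ for every $k < i$. I would then prove the colon equality by induction on $d-|\Lambda|$, reducing
\[
(x_\lambda^{n_\lambda}\mid \lambda\in \Lambda)M : x_i^{n_i}x_j^{n_j} \;=\; (x_\lambda^{n_\lambda}\mid \lambda\in \Lambda)M : x_j^{n_j}
\]
to an analysis of multiplication by $x_j^{n_j}$ on a suitable quotient of $M$. The extra elements captured by the left-hand colon but not the right-hand one correspond to $\m$-power torsion in lower local cohomology of that quotient, which is precisely what the annihilator condition on $x_i$ forces to vanish.

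The main obstacle is the presence of the arbitrary powers $n_\lambda$: the formation of $\ba(-)$ does not behave transparently under replacing $x_\lambda$ by $x_\lambda^{n_\lambda}$, so one cannot simply reduce to the unpowered case. Following Kawasaki, I expect to resolve this by carefully tracking the relation between $\ba(M/(x_{i+1}^{n_{i+1}},\ldots,x_d^{n_d})M)$ and $\ba(M/(x_{i+1},\ldots,x_d)M)$ at each inductive step, verifying that the annihilator property responsible for the colon equality is preserved under raising parameters to powers, so that the induction closes.
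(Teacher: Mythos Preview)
The paper does not prove this proposition at all: it is recorded as a black box, with part (1) attributed to \cite[p.~482]{Cuo} and part (2) to \cite[Theorem~3.3]{Kaw}, and no argument is supplied. So there is no ``paper's own proof'' to compare your proposal against.

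That said, your outline for (1) is the standard construction and is correct: the inequality $\dim\Supp \H^j_\m(N_i)\le j$ (available under the dualizing-complex hypothesis) guarantees $\ba(N_i)$ is not contained in any top-dimensional associated prime of $N_i$, and prime avoidance finishes. For (2) your sketch is considerably vaguer; you correctly identify the main difficulty (stability of the annihilator condition under raising the parameters to powers) but then essentially defer to Kawasaki for its resolution. Since the paper itself defers the entire statement to Kawasaki, this is not a defect relative to the paper, but if you intend an independent proof you would need to actually carry out the inductive bookkeeping you allude to---Kawasaki's argument passes through an auxiliary notion (d-sequences and unconditioned strong d-sequences) rather than working directly with the colon equalities, and that machinery is doing real work that your outline does not reproduce.
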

\begin{lem}\label{lemma of approximations of Ext}
	Let $R$ be a local ring essentially of finite type over $\C$, and $M$ and $N$ be finitely generated $R$-modules. Then $(\Ext_{R_p}(M_p,N_p))$ is an approximation of $\Ext_R(M,N)$.
\end{lem}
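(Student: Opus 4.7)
The plan is to resolve $M$ by finitely generated free $R$-modules and transfer the computation of each $\Ext_R^i(M,N)$ to the approximations level-by-level. Fix $i\in\Z_{\ge 0}$. Since $R$ is Noetherian and $M$ is finitely generated, we may choose a truncated free resolution
\[
F_{i+1}\xrightarrow{d_{i+1}}F_i\to\cdots\to F_0\to M\to 0,
\]
with each $F_j=R^{r_j}$ and the sequence exact. Applying $\Hom_R(-,N)$ yields a complex whose terms are $\Hom_R(F_j,N)\cong N^{r_j}$ and whose $i$-th cohomology computes $\Ext_R^i(M,N)$.

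Next I would approximate the boundary maps to obtain a complex of finitely generated free $R_p$-modules $F_{p,j}=R_p^{r_j}$ augmented by $M_p$. The most delicate step is to verify that this approximated complex is a resolution of $M_p$ for almost all $p$. Exactness at $F_0$ is immediate from the way $M_p$ is constructed from a presentation, and the relations $d_j\circ d_{j+1}=0$ transfer under approximation to $d_{p,j}\circ d_{p,j+1}=0$ for almost all $p$ by \L os's theorem. For the reverse inclusions $\ker d_{p,j}\subseteq \operatorname{Im}d_{p,j+1}$, one fixes a finite generating set of each $\ker d_j$ in $R^{r_j}$ together with explicit preimages in $R^{r_{j+1}}$ (possible since $R$ is Noetherian); approximating these finitely many vector identities yields the corresponding inclusions over $R_p$ for almost all $p$, within the formalism of \cite[Section 2.5]{Sch05} and \cite[Subsection 3.3]{Yam}.

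Once the approximated resolution is in hand, applying $\Hom_{R_p}(-,N_p)$ gives a complex with terms $N_p^{r_j}$ whose $i$-th cohomology equals $\Ext_{R_p}^i(M_p,N_p)$. Since ultraproducts are exact functors, they commute with cohomology of complexes, so
\[
\ulim_p \Ext_{R_p}^i(M_p,N_p)\;\cong\;H^i\bigl(\ulim_p N_p^{r_\bullet}\bigr)\;\cong\;H^i\bigl(\Hom_R(F_\bullet,N)\otimes_R R_\infty\bigr),
\]
and the faithful flatness of $R\hookrightarrow R_\infty$ identifies the last expression with $\Ext_R^i(M,N)\otimes_R R_\infty$. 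This exhibits $(\Ext_{R_p}^i(M_p,N_p))_p$ as an approximation of $\Ext_R^i(M,N)$. The principal obstacle is the second step, which relies on Noetherianity of $R$ to finitely generate each kernel so that the transfer principle can be invoked; with that in hand, the conclusion reduces to the exactness of ultraproducts and the faithful flatness of $R\to R_\infty$.
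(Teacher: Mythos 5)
Your overall strategy---transferring a truncated free resolution of $M$ to the approximations term by term and invoking exactness of ultraproducts---is a genuinely more hands-on alternative to the paper's proof, which simply cites \cite[Theorem 2.3.5(e)]{HH} (a generic-flatness statement that $\Ext$ commutes with reduction mod $p$) and then invokes the dictionary between reductions mod $p$ and approximations. However, there is a real gap in the step where you argue that $F_{p,\bullet}\to M_p\to 0$ is exact for almost all $p$. Approximating a finite generating set $\{v_1,\dots,v_k\}$ of $\ker d_j$ together with preimages $w_i$ satisfying $d_{j+1}(w_i)=v_i$ only shows that the approximated elements $v_{i,p}$ lie in $\operatorname{Im}d_{p,j+1}$; it does not show that $\ker d_{p,j}$ is generated by the $v_{i,p}$, which is what the inclusion $\ker d_{p,j}\subseteq\operatorname{Im}d_{p,j+1}$ requires. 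The statement ``$\ker d_{p,j}$ is spanned by those approximated generators'' is not a finite list of identities and does not transfer on its own.

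The missing claim---that a bounded exact complex of finitely generated $R$-modules stays exact under approximation for almost all $p$---is true, and you can prove it with the same two tools you already use in the final step. Since $R\to R_\infty$ is faithfully flat, $F_\bullet\otimes_R R_\infty=\ulim_p F_{p,\bullet}$ remains exact; since ultraproducts commute with homology, $\ulim_p H^j(F_{p,\bullet})=0$; and an ultraproduct of abelian groups vanishes if and only if almost all of the factors vanish, so $H^j(F_{p,\bullet})=0$ for almost all $p$ simultaneously in the finitely many relevant degrees. With this patch, the remaining computation identifying $\ulim_p\Ext^i_{R_p}(M_p,N_p)$ with $\Ext^i_R(M,N)\otimes_R R_\infty$ goes through.
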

\begin{proof}
	Comparing approximations with reductions modulo $p$, this follows from \cite[Theorem 2.3.5 (e)]{HH}.
\end{proof}
\begin{prop}\label{proposition approximations of p-standard s.o.p.}
	Let $(R,\m)$ be a local ring essentially of finite type over $\C$ and $M$ be a finitely generated $R$-module of $\dim s$. If $x_1,\dots,x_s$ is a $\operatorname{p}$-standard system of parameters for $M$, then $x_{1,p},\dots, x_{s,p}$ is a $\operatorname{p}$-standard system of parameters for $M_p$ for almost all $p$.
\end{prop}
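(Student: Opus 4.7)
The plan is to verify each defining condition of a $\operatorname{p}$-standard system of parameters passes to the approximations for almost all $p$, treating the finite list of conditions one at a time. First I would check that the system-of-parameters property itself is preserved. Since $R$ is essentially of finite type over $\C$, approximations preserve Krull dimension, so $\dim M_p = \dim M = s$ and $\dim R_p = \dim R$ for almost all $p$ (see, e.g., \cite[Theorem 4.5]{Sch03}). The quotient $M_p/(x_{1,p},\dots,x_{s,p})M_p$ is an approximation of $M/(x_1,\dots,x_s)M$, which has Krull dimension zero, hence $M_p/(x_{1,p},\dots,x_{s,p})M_p$ also has dimension zero for almost all $p$. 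Together with $x_{i,p}\in\m_p$, this shows $x_{1,p},\dots,x_{s,p}$ is a system of parameters for $M_p$. The same count gives $\dim M_p/(x_{i+1,p},\dots,x_{s,p})M_p = i$, so the products defining $\ba$ range over the same set of cohomological indices.

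The key technical step is the following claim: if $N$ is a finitely generated $R$-module with approximation $(N_p)$, and $r\in R$ has approximation $(r_p)$, then
\[
r\in\Ann_R \H_\m^j(N)\quad\Longrightarrow\quad r_p\in\Ann_{R_p}\H_{\m_p}^j(N_p)\text{ for almost all }p.
\]
Since $R$ and $R_p$ are essentially of finite type over a field, they admit normalized dualizing complexes $\omega_R^\bullet$ and $\omega_{R_p}^\bullet$. Local duality identifies
\[
\Ann_R\H_\m^j(N) = \Ann_R\,h^{-j}\RHom_R(N,\omega_R^\bullet),\qquad \Ann_{R_p}\H_{\m_p}^j(N_p) = \Ann_{R_p}\,h^{-j}\RHom_{R_p}(N_p,\omega_{R_p}^\bullet),
\]
using that a module and its Matlis dual share the same annihilator (the injective hull of the residue field is a cogenerator). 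Choosing $\omega_R^\bullet$ and $\omega_{R_p}^\bullet$ compatibly so that $(\omega_{R_p}^\bullet)$ is an approximation of $\omega_R^\bullet$ (for instance via a closed embedding of $\Spec R$ into a smooth ambient scheme and the local cohomology description of $\omega^\bullet$), Lemma \ref{lemma of approximations of Ext} gives that $(\Ext_{R_p}^{-j}(N_p,\omega_{R_p}^\bullet))$ is an approximation of $\Ext_R^{-j}(N,\omega_R^\bullet)$. These Ext modules are finitely generated, and for a finitely generated module $T$ with approximation $(T_p)$, an element $r\in\Ann_R T$ acts by zero on a finite set of generators; transferring these finitely many equations through the ultrafilter yields $r_p\in\Ann_{R_p}T_p$ for almost all $p$, proving the claim.

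Finally I would combine these ingredients. Fix $i\in\{1,\dots,s\}$ and set $N_i := M/(x_{i+1},\dots,x_s)M$ with approximation $N_{i,p} := M_p/(x_{i+1,p},\dots,x_{s,p})M_p$. By hypothesis $x_i\in \ba(N_i) = \prod_{0\le j<i}\Ann_R\H_\m^j(N_i)$, so we may write $x_i = \sum_k r_{k,0}r_{k,1}\cdots r_{k,i-1}$ with $r_{k,j}\in\Ann_R\H_\m^j(N_i)$. Applying the claim to each $r_{k,j}$ (only finitely many conditions), we get $x_{i,p} = \sum_k r_{k,0,p}\cdots r_{k,i-1,p}$ with each $r_{k,j,p}\in\Ann_{R_p}\H_{\m_p}^j(N_{i,p})$ for almost all $p$, so $x_{i,p}\in\ba(N_{i,p})$ for almost all $p$. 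Intersecting over the finitely many $i\in\{1,\dots,s\}$ produces a single ultrafilter set on which all conditions hold simultaneously, completing the proof.

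The principal obstacle is ensuring that the normalized dualizing complexes of $R$ and $R_p$ are genuinely compatible under the approximation process, so that Lemma \ref{lemma of approximations of Ext} applies to $\RHom(-,\omega^\bullet)$ and not merely to $\RHom(-,R)$. Once one has a presentation of $\omega_R^\bullet$ whose constituent data (ambient smooth ring, twisting sheaves, shift) admit compatible approximations over $\bar{\F_p}$, the remaining verification is essentially bookkeeping across the finite list of conditions that define $\operatorname{p}$-standardness.
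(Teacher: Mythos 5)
Your proposal follows the same strategy as the paper: reduce the $\operatorname{p}$-standard s.o.p.\ condition to finitely many annihilation statements for local cohomology, convert those via local duality to annihilation of Ext modules, and then transfer annihilation to the approximations using Lemma~\ref{lemma of approximations of Ext} together with the fact that each condition involves only finitely many elements. The one place you leave something open --- compatibility of the dualizing complexes $\omega_R^\bullet$ and $\omega_{R_p}^\bullet$ under approximation --- is exactly where the paper takes a slightly different (and cleaner) route: it fixes a regular local ring $(S,\n)$ essentially of finite type over $\C$ surjecting onto $R$, sets $t=\dim S$, and computes $\Ann_S \H_\m^i(M)=\Ann_S\Ext_S^{t-i}(M,S)$ by local duality over $S$, where $\omega_S^\bullet$ is simply $S$ shifted. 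This replaces $\RHom(-,\omega_R^\bullet)$ by $\Ext_S(-,S)$, a genuine Ext between finitely generated modules over a ring essentially of finite type over $\C$, so Lemma~\ref{lemma of approximations of Ext} applies as stated with no further compatibility work. You in fact gesture at this fix (``via a closed embedding of $\Spec R$ into a smooth ambient scheme''), so the content is the same; the paper just bakes the smooth embedding into the setup from the start, eliminating the obstacle rather than confronting it. The remaining bookkeeping in your plan (preserving the s.o.p.\ property, intersecting finitely many ultrafilter sets) matches the paper's.
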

\begin{proof}
	Since $x_{1},\dots,x_s$ is a system of parameters for $M$, $x_{1,p}\dots, x_{s,p}$ is a system of parameters for $M_p$ for almost all $p$. Let $(S,\n)$ be a regular local ring essentially of finite type over $\C$ such that $R$ is isomorphic to a homomorphic image of $S$ and $t=\dim S$. By the local duality, we have
	\[
		\H_\m^i(M)\cong\H_{\n}^i(M)\cong \Hom_S(\Ext_{S}^{t-i}(M,S),E_S)
	\]
	for $1\le i \le t$, where $E_S$ is the injective hull of $S/\n$ as an $S$-module. Hence, we have
	\[
		\Ann_S \H_\m^i(M) =\Ann_S \Ext_S^{t-i}(M,S).
	\]
	Similarly, we have $\Ann_{S_p}\H_{\m_p}^i(M_p)=\Ann_{S_p}\Ext_{S_p}^{t-i}(M_p,S_p)$ for almost all $p$.	By Lemma \ref{lemma of approximations of Ext}, $(\Ext_{S_p}^{t-i}(M_p,S_p))$ is an approximation of $\Ext_S^{t-i}(M,S)$. Given an element $x$ of $\Ann_S\Ext_{S}^{t-i}(M,S)$, we have $x_p\in \Ann_{S_p}\Ext_{S_p}^{t-i}(M_p,S_p)$ for almost all $p$. Therefore, $x_{i,p}\in \ba(M_p/(x_{i+1,p},\dots,x_{s,p})M_p)$ for any $1\le i \le s$ for almost all $p$, which completes the proof.
\end{proof}
\begin{prop}
	Let $R$ be a reduced local ring essentially of finite type over $\C$ and $\epsilon \in {^*\N}$. A $\operatorname{p}$-standard system of parameters $x_1,\dots,x_d$ for $R$ is a $\operatorname{p}$-standard sequence on $F^\epsilon_{*} R_\infty$ and $R^{\upf}$.
\end{prop}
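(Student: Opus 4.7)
The plan is to transfer the $\operatorname{p}$-standard sequence property from the positive-characteristic reductions to $F^\epsilon_* R_\infty$ via ultraproducts, and then bootstrap to $R^{\upf}$ using the filtered-colimit description in Proposition~\ref{proposition ultra-perfect closure limit}. By Proposition~\ref{proposition approximations of p-standard s.o.p.}, the tuple $x_{1,p},\dots,x_{d,p}$ is a $\operatorname{p}$-standard system of parameters for $R_p$ for almost all $p$, hence by Proposition~\ref{proposition of p-standard s.o.p. -> p-standard seq}(2) it is a $\operatorname{p}$-standard sequence on $R_p$ for almost all $p$. Writing $\epsilon=\ulim_p e_p$, this yields, for any fixed positive integers $n_1,\dots,n_d$, any subset $\Lambda\subsetneq\{1,\dots,d\}$ and any $i,j \in \{1,\dots,d\}\setminus\Lambda$ with $i\le j$, the colon equality in $R_p$ with the shifted exponents $n_\lambda p^{e_p}$, $n_i p^{e_p}$, $n_j p^{e_p}$ for almost all $p$.

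Next I would unravel the twisted scalar action on $F^\epsilon_* R_\infty$: the condition $x_i^{n_i}x_j^{n_j}\cdot F^\epsilon_* b \in (x_\lambda^{n_\lambda}|\lambda\in\Lambda)F^\epsilon_* R_\infty$ becomes, after identifying underlying sets with $R_\infty$, equivalent to $F^\epsilon(x_i)^{n_i} F^\epsilon(x_j)^{n_j} b \in (F^\epsilon(x_\lambda)^{n_\lambda}|\lambda\in\Lambda) R_\infty$, and $F^\epsilon(x_\lambda)^{n_\lambda}$ has approximation $x_{\lambda,p}^{n_\lambda p^{e_p}}$. Since finitely generated ideals and their colons by a single element commute with ultraproducts in Schoutens' framework, this condition corresponds to the analogous ideal membership in $R_p$ for almost all $p$. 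The colon equalities from the first step then transfer directly, producing the desired $\operatorname{p}$-standard property on $F^\epsilon_* R_\infty$. The main technical obstacle is this bookkeeping: keeping straight that the fixed standard exponents $n_\lambda$ in the outer condition become the non-standard exponents $n_\lambda p^{e_p}$ inside each slice after pulling the action through $F^\epsilon$, and verifying that finitely generated ideals and their colons by one element commute with ultraproducts.

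Finally, for $R^{\upf}$, I would invoke Proposition~\ref{proposition ultra-perfect closure limit}, which realizes $R^{\upf}$ as the filtered union of injections $F^\epsilon_* R_\infty \hookrightarrow R^{\upf}$ indexed by $\epsilon \in {^*\N}$. Given $y \in R^{\upf}$ with $x_i^{n_i}x_j^{n_j} y = \sum_{\lambda\in\Lambda} x_\lambda^{n_\lambda} z_\lambda$ for some $z_\lambda \in R^{\upf}$, the finitely many elements $y$ and $z_\lambda$ all lie in a common $F^\epsilon_* R_\infty$, so the equation holds already at that level. The previous step then gives $x_j^{n_j} y \in (x_\lambda^{n_\lambda}|\lambda\in\Lambda) F^\epsilon_* R_\infty \subseteq (x_\lambda^{n_\lambda}|\lambda\in\Lambda) R^{\upf}$, as required.
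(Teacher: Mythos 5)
Your proposal is correct and takes essentially the same approach as the paper: unwind the Frobenius twist to a colon-ideal membership in $R_\infty$, pass to the slices $R_p$ where the exponents become $n_\lambda p^{e_p}$, invoke that $x_{1,p},\dots,x_{d,p}$ is a $\operatorname{p}$-standard sequence on $R_p$ for almost all $p$ (Propositions~\ref{proposition of p-standard s.o.p. -> p-standard seq} and~\ref{proposition approximations of p-standard s.o.p.}), and transfer back. For $R^{\upf}$ the paper simply says ``similarly'' (meaning: repeat the ultraproduct argument with $R_p^{1/p^\infty}$ in place of $F^{e_p}_*R_p$), whereas you instead reduce to the first case through the filtered union $R^{\upf}=\varinjlim_{\epsilon}F^\epsilon_*R_\infty$ of Proposition~\ref{proposition ultra-perfect closure limit}; that is a legitimate and slightly cleaner way to close the second claim, since a colon membership involves only finitely many elements and hence lands in a single $F^\epsilon_*R_\infty$.
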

\begin{proof}
	Take any $\epsilon=\ulim_p e_p\in {^*\N}$. For any $n_1,\dots, n_d\in \N$, any subset $\Lambda\subsetneq \{1,\dots,d\}$ and any $i,j\in \{1,\dots,d\}\setminus{\Lambda}$ such that $i\le j$, take $y\in (x_\lambda^{n_\lambda}|\lambda\in \Lambda)F^\epsilon_*R_\infty: x_i^{n_i}x_j^{n_j}$. Suppose that $y=\ulim_p y_p=\ulim_p F^{e_p}_*z_p$. Then we have $x_{i,p}^{n_ip^{e_p}}x_{j,p}^{n_jp^{e_p}}z_p\in (x_{\lambda,p}^{n_\lambda p^{e_p}}|\lambda\in \Lambda)R_p$ for almost all $p$. Since $x_{1,p},\dots,x_{d,p}$ is a $\operatorname{p}$-standard sequence on $R_p$ for almost all $p$ by Proposition \ref{proposition of p-standard s.o.p. -> p-standard seq} and Proposition \ref{proposition approximations of p-standard s.o.p.}, $x_{j,p}^{n_jp^{e_p}}z_p \in (x_{\lambda,p}^{n_\lambda p^{e_p}}|\lambda\in \Lambda)R_p$ for almost all $p$. Therefore, $y_p=F^{e_p}_*z_p \in (x_{\lambda,p}^{n_\lambda}|\lambda\in \Lambda)F^{e_p}_*R_p:x_{j,p}^{n_j}$ for almost all $p$. Hence, $y\in (x_\lambda^{n_\lambda}|\lambda\in \Lambda)F^{\epsilon}_* R_\infty:x_j^{n_j}$. Similarly, we can also show the result for $R^{\upf}$.
\end{proof}
\begin{prop}\label{propositon of modules with p-standard seq}
	Let $(R,\m)$ be a Noetherian local ring of dimension $d$ and $M$ be an $R$-module. Suppose that there exists a system of parameters $x_1,\dots,x_d$ for $R$ such that $x_1,\dots,x_d$ is a $\operatorname{p}$-standard sequence on $M$. Then we have
	\[
		\H_{(x_1,\dots,x_t)}^{i}(M)=\H_\m^0(\H_{(x_1,\dots,x_t)}^i(M))
	\]
	for any $1\le t \le d$ and $i<t$.
\end{prop}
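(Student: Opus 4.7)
My plan is to show that $\H^i_I(M)$ is $\m$-torsion for each $i<t$, where $I=(x_1,\dots,x_t)$; this is equivalent to the asserted identity $\H^i_I(M)=\H^0_\m(\H^i_I(M))$. First I observe that $\H^i_I(M)$ is automatically $I$-torsion, so every element is killed by a power of each $x_k$ with $k\le t$. Since $x_1,\dots,x_d$ is a system of parameters for $R$, the ideal $(x_1,\dots,x_d)$ is $\m$-primary, so it is enough to prove that every element is in addition killed by a power of each $x_j$ with $j>t$. Equivalently, I will show $\bigl(\H^i_I(M)\bigr)_{x_j}=0$ for every $j>t$; by compatibility of local cohomology with flat localization this reduces to showing $\H^i_{IR_{x_j}}(M_{x_j})=0$.

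The heart of the argument is to show that $x_1,\dots,x_t$ forms a regular sequence on the $R_{x_j}$-module $M_{x_j}$, from which the desired vanishing in degrees $<t$ follows from the standard Koszul computation (using that powers of a regular sequence remain regular). To verify the regular sequence property, I fix $k\in\{1,\dots,t\}$ and take $z/x_j^N\in M_{x_j}$ with $x_k\cdot z/x_j^N\in (x_1,\dots,x_{k-1})M_{x_j}$. Clearing denominators produces some $L\ge 1$ with $x_j^L x_k z\in (x_1,\dots,x_{k-1})M$, so $z\in (x_1,\dots,x_{k-1})M :_M x_k x_j^L$. Applying the $\operatorname{p}$-standard identity on $M$ with $\Lambda=\{1,\dots,k-1\}$, $n_\lambda=1$ for $\lambda\in\Lambda$, and the pair of indices $(k,j)$ with $n_k=1$, $n_j=L$ then yields
\[
(x_1,\dots,x_{k-1})M :_M x_k x_j^L = (x_1,\dots,x_{k-1})M :_M x_j^L,
\]
whence $x_j^L z\in (x_1,\dots,x_{k-1})M$ and therefore $z/x_j^N\in (x_1,\dots,x_{k-1})M_{x_j}$. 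This shows that $x_k$ is a nonzerodivisor on $M_{x_j}/(x_1,\dots,x_{k-1})M_{x_j}$, as needed.

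The main obstacle is just bookkeeping: verifying that the hypotheses of the $\operatorname{p}$-standard property are all in force at the point where I invoke it. The crucial numerical input is the chain $1\le k\le t<j\le d$, which simultaneously ensures that $\Lambda=\{1,\dots,k-1\}\subsetneq\{1,\dots,d\}$, that both $k$ and $j$ lie in the complement of $\Lambda$, and that the ordering condition $k\le j$ holds. Once those conditions are checked, the remainder of the argument is essentially formal.
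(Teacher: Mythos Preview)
Your argument is correct and takes a genuinely different route from the paper's. The paper proceeds by induction on $t$: given $\eta\in\H^i_{(x_1,\dots,x_t)}(M)$ with $0<i<t$, one chooses $n$ with $x_1^n\eta=0$, lifts $\eta$ along the connecting homomorphism of the long exact sequence for $0\to 0:_Mx_1^n\to M\to M\to M/x_1^nM\to 0$ to a class $\xi\in\H^{i-1}_{(x_1,\dots,x_t)}(M/x_1^nM)\cong\H^{i-1}_{(x_2,\dots,x_t)}(M/x_1^nM)$, and then applies the induction hypothesis to the shorter sequence $x_2,\dots,x_d$ (which is again $\operatorname{p}$-standard on $M/x_1^nM$, over the quotient ring $R/(x_1^n)$ of dimension $d-1$). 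Your approach bypasses the induction entirely: for each $j>t$ you localize at $x_j$ and observe that the $\operatorname{p}$-standard identity with $\Lambda=\{1,\dots,k-1\}$ and the pair $(k,j)$ forces $x_1,\dots,x_t$ to become an honest regular sequence on $M_{x_j}$, so the \v{C}ech cohomology vanishes below degree $t$. This is more direct and makes the role of the hypothesis $k\le t<j$ completely transparent; the paper's argument, on the other hand, packages the same colon identities into an inductive reduction. One small remark: instead of invoking the general fact that powers of a regular sequence remain regular, you could simply rerun your colon computation with arbitrary exponents to see directly that $x_1^{n},\dots,x_t^{n}$ is $M_{x_j}$-regular for every $n\ge 1$, since the $\operatorname{p}$-standard hypothesis already allows all exponents.
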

\begin{proof}
	We work by induction on $t$. If $t=1$ and $y\in \H_{(x_1)}^0(M)$, then there exists $n\in \N$ such that $x_1^n y =0$. For any $j\ge 1$, we have $x_1^nx_jy=0$. Since $x_1,\dots, x_d$ is a $\operatorname{p}$-standard sequence on $M$, we have $y\in 0:_M x_1^nx_j=0:_Mx_j$. Hence, $x_j y=0$. Since $x_1,\dots,x_d$ is a system of parameters for $R$, we have $y\in \H_\m^0(\H_{(x_1)}^0(M))$. Next, assume that $t>1$. Take any $n\in \N$ and consider an exact sequence
	\[
		0\to 0:_M x_1^n \to M \xrightarrow{\cdot x_1^n} M \to M/x_1^n\to 0.
	\]
	Since $x_1,\dots,x_d$ is a $\operatorname{p}$-standard sequence on $M$, $(x_1,\dots,x_d)(0:_M x_1^n)=0$. Hence, we get a long exact sequence
	\[
		\xymatrix{
		 0 \ar[r] & 0:_M x_1^n \ar[r] & \H_{(x_1,\dots,x_t)}^0(M) \ar[r]^{\cdot x_1^n}& \H_{(x_1,\dots,x_t)}^0(M)\ar[r] & \H_{(x_1,\dots,x_t)}^0(M/x_1^nM)\\
		 &\ar[r] & \H_{(x_1,\dots,x_t)}^1(M) \ar[r]^{\cdot x_1^n}& \H_{(x_1,\dots,x_t)}^1(M)\ar[r] & \H_{(x_1,\dots,x_t)}^1(M/x_1^nM) \\
		 &\ar[r] &\dots \qquad \qquad& &
		}
	\]
	For any $ i < t$ and for any $\eta\in \H_{(x_1,\dots,x_t)}^i(M)$, there exists $n\in \N$ such that $x_1^n \eta=0$. If $i=0$, then we have $\eta\in \H_\m^0(\H_{(x_1,\dots,x_t)}^0(M))$ since $(x_1,\dots,x_d)(0:_M x_1^n)=0$. If $i>0$, then there exists an element $\xi \in \H_{(x_1,\dots,x_t)}^{i-1}(M/x_1^nM)$ mapped to $\eta$. By the induction hypothesis, there exists $m\in \N$ such that $\m^m\xi=0$. Hence, we have $\m^m\eta=0$.
\end{proof}
The next corollary is a variant of the Nagel-Schenzel isomorphism.
\begin{cor}\label{corollary of local cohomology w.r.t. p-standard s.o.p.}
	With notation as in Proposition \ref{propositon of modules with p-standard seq}, for any $0\le t\le d$, we have
	\[
		\H_{\m}^t(M)=\H_\m^0(\H_{(x_1,\dots,x_t)}^t(M)).
	\]
\end{cor}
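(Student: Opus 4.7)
The plan is to apply the Grothendieck spectral sequence attached to the composed functor $\Gamma_{\m}\circ\Gamma_{(x_1,\dots,x_t)}=\Gamma_{\m}$, where the identification holds because $(x_1,\dots,x_t)\subseteq\m$. Since $\Gamma_{(x_1,\dots,x_t)}$ sends each indecomposable injective $E(R/\p)$ to either $0$ or itself, it preserves injectivity and in particular sends injectives to $\Gamma_{\m}$-acyclic modules. This yields a convergent first-quadrant spectral sequence
\[
E_2^{p,q}=\H_{\m}^p\bigl(\H_{(x_1,\dots,x_t)}^q(M)\bigr)\;\Longrightarrow\;\H_{\m}^{p+q}(M).
\]

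First I would invoke Proposition~\ref{propositon of modules with p-standard seq}, which tells us that $\H_{(x_1,\dots,x_t)}^q(M)$ is $\m$-torsion for every $q<t$. Any $\m$-torsion module is $\Gamma_{\m}$-acyclic, so this gives the vanishing
\[
E_2^{p,q}=0\qquad\text{for all } p\ge 1 \text{ and } q<t,
\]
and on the diagonal $p+q=t$ the only term that can be nonzero is the corner $E_2^{0,t}=\H_{\m}^0(\H_{(x_1,\dots,x_t)}^t(M))$.

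Next I would verify that this corner term actually survives to $E_\infty$. For each $r\ge 2$ the outgoing differential $d_r:E_r^{0,t}\to E_r^{r,t-r+1}$ lands in a subquotient of $E_2^{r,t-r+1}=0$, and the incoming differential comes from $E_r^{-r,t+r-1}=0$. Hence $E_\infty^{0,t}=E_2^{0,t}$, and because the remaining entries on the diagonal vanish the decreasing filtration on $\H_{\m}^t(M)$ collapses, giving
\[
\H_{\m}^t(M)=E_\infty^{0,t}=\H_{\m}^0\bigl(\H_{(x_1,\dots,x_t)}^t(M)\bigr),
\]
as desired.

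The main obstacle is conceptual rather than computational: the genuine input is Proposition~\ref{propositon of modules with p-standard seq}, which encodes the $\m$-torsionness of $\H_{(x_1,\dots,x_t)}^q(M)$ in the subtop degrees $q<t$; with that in hand, the corollary is a bookkeeping exercise on the $E_2$ page. A self-contained alternative would be an induction on $t$ using the long exact sequences in $\H_{(x_1,\dots,x_t)}^{\bullet}$ arising from multiplication by powers of $x_t$, mirroring the argument of Proposition~\ref{propositon of modules with p-standard seq} itself, but the spectral sequence argument is cleaner and more transparent.
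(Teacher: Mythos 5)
Your proof is correct and takes essentially the same route as the paper: the paper invokes the Grothendieck spectral sequence $E_2^{ij}=\H_\m^i(\H_{(x_1,\dots,x_t)}^j(M))\Rightarrow \H_\m^{i+j}(M)$ together with Proposition \ref{propositon of modules with p-standard seq} (citing the proof of \cite[Lemma 3.4]{NS} for the bookkeeping), and your argument simply spells out that $E_2$-page analysis in detail, including the observation that $\Gamma_{(x_1,\dots,x_t)}$ preserves injectives so the spectral sequence applies.
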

\begin{proof}
	Considering the spectral sequence
	\[
		E_2^{ij}=\H_\m^{i}(\H_{(x_1,\dots,x_t)}^j(M)) \Rightarrow E^{i+j}=\H_\m^{i+j}(M),
	\]
	the conclusion follows from Proposition \ref{propositon of modules with p-standard seq} and the proof of \cite[Lemma 3.4]{NS}.
\end{proof}
\begin{prop}\label{proposition local cohomology ultraproducts injective}
	Let $(R,\m)$ be a reduced local ring essentially of finite type over $\C$  and $\epsilon=\ulim_p e_p\in {^*\N}$. Then the morphisms
	\[
		\H_\m^i(F^\epsilon_* R_\infty)\to \ulim_p \H_{\m_p}^i(F^{e_p}_*R_p)
	\]
	and
	\[
		\H_{\m}^i(R^{\upf})\to \H_{\m_p}^i(R_p^{1/p^{\infty}})
	\]
	are injective for any $i\ge 0$.
\end{prop}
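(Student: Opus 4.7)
The plan is to establish the stronger injectivity
\[
\H^i_{(x_1,\dots,x_i)}(M_\infty)\hookrightarrow \ulim_p \H^i_{(x_{1,p},\dots,x_{i,p})}(M_p)
\]
for $M_\infty\in\{F^\epsilon_*R_\infty,R^{\upf}\}$ (with the corresponding approximations $M_p$) and then to deduce the proposition by restricting to the $\m$-torsion part via Corollary \ref{corollary of local cohomology w.r.t. p-standard s.o.p.}.

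I would first fix a p-standard system of parameters $x_1,\dots,x_d$ of $R$, which exists by Proposition \ref{proposition of p-standard s.o.p. -> p-standard seq}(1). By Propositions \ref{proposition approximations of p-standard s.o.p.} and \ref{proposition of p-standard s.o.p. -> p-standard seq}(2), combined with the same argument as in the preceding proposition, the sequence $x_{1,p},\dots,x_{d,p}$ is a p-standard sequence on $R_p$, on $F^{e_p}_*R_p$, and on $R_p^{1/p^\infty}$ for almost all $p$, while $x_1,\dots,x_d$ itself is a p-standard sequence on $F^\epsilon_*R_\infty$ and on $R^{\upf}$.

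Under the top \v{C}ech presentation $\H^i_{(x_1,\dots,x_i)}(M)=\varinjlim_n M/(x_1^n,\dots,x_i^n)M$, a class $[v]_n$ in $\H^i_{(x_1,\dots,x_i)}(M_\infty)$ maps to zero in the ultraproduct exactly when, for almost all $p$, there exists some $m_p\geq 0$ with
\[
(x_{1,p}\cdots x_{i,p})^{m_p}v_p\in(x_{1,p}^{n+m_p},\dots,x_{i,p}^{n+m_p})M_p.
\]
The a priori non-uniformity of $m_p$ is the main obstacle, and I would eliminate it through the following uniform-bound lemma, proved by induction on $i$: \emph{if $x_1,\dots,x_d$ is a p-standard sequence on $M$ and $(x_1\cdots x_i)^m v\in(x_1^{n+m},\dots,x_i^{n+m})M$ for some $v\in M$, $m\geq 0$, and $n\geq 1$, then already $(x_1\cdots x_i)v\in(x_1^{n+1},\dots,x_i^{n+1})M$.} For the base case $i=1$, writing $x_1^m v=x_1^{n+m}w$ gives $v-x_1^n w\in 0:_M x_1^m$, which equals $0:_M x_1$ by iterated application of the p-standard colon identity, so $x_1 v=x_1^{n+1}w$. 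For the inductive step, expanding $(x_1\cdots x_i)^m v=\sum_j x_j^{n+m}w_j$ and isolating the $j=i$ term yields
\[
(x_1\cdots x_{i-1})^m v - x_i^n w_i\in (x_1^{n+m},\dots,x_{i-1}^{n+m})M:x_i^m,
\]
which reduces to $(x_1^{n+m},\dots,x_{i-1}^{n+m})M:x_i$ by iterated p-standard with $\Lambda=\{1,\dots,i-1\}$ and $i'=j'=i$. Multiplying by $x_i$ and passing to $M/x_i^{n+1}M$, on which $x_1,\dots,\widehat{x_i},\dots,x_d$ remains a p-standard sequence (verified by applying the p-standard identity on $M$ with $\Lambda$ enlarged to include $i$), reduces the claim to the induction hypothesis applied to $\overline{x_iv}$ at length $i-1$.

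Given the lemma, for almost all $p$ we conclude that $(x_{1,p}\cdots x_{i,p})v_p\in(x_{1,p}^{n+1},\dots,x_{i,p}^{n+1})M_p$, and taking ultraproducts yields $(x_1\cdots x_i)v\in(x_1^{n+1},\dots,x_i^{n+1})M_\infty$ (since membership in a finitely generated ideal is a first-order condition), so $[v]_n=0$ in the direct limit defining $\H^i_{(x_1,\dots,x_i)}(M_\infty)$. The argument for $R^{\upf}$ is identical, with $R_p^{1/p^\infty}$ replacing $F^{e_p}_*R_p$. The hardest part will be the inductive step of the lemma, in particular the verification that p-standardness descends to $M/x_i^{n+1}M$ for the sequence obtained by omitting $x_i$.
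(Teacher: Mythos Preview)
Your proposal is correct and follows essentially the same route as the paper: reduce to the top cohomology $\H^i_{(x_1,\dots,x_i)}$ with respect to a $\operatorname{p}$-standard system of parameters, then use a uniform bound to replace the non-uniform witness $m_p$ by $1$, and finally invoke Corollary~\ref{corollary of local cohomology w.r.t. p-standard s.o.p.} to restrict to the $\m$-torsion part. Two minor differences are worth noting. First, your ``uniform-bound lemma'' is precisely \cite[Proposition~2.4]{Kaw}, which the paper simply cites; your inductive proof of it is fine (and the verification that $x_1,\dots,x_{i-1}$ remains $\operatorname{p}$-standard on $M/x_i^{n+1}M$ is indeed just the original $\operatorname{p}$-standard identity with $\Lambda$ enlarged by $\{i\}$). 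Second, the paper unwinds the Frobenius twist and applies Kawasaki's result on $R_p$ itself, whereas you apply the lemma directly on $M_p=F^{e_p}_*R_p$ using that $x_{1,p},\dots,x_{d,p}$ is $\operatorname{p}$-standard on this module; the two formulations are equivalent since the colon conditions on $F^{e_p}_*R_p$ translate, via the twist, into the same conditions on $R_p$ with exponents scaled by $p^{e_p}$.
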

\begin{proof}
	Since the proofs are similar, we will only show that
	$\H_\m^i(F^\epsilon_* R_\infty)\to \ulim_p \H_{\m_p}^i(F^{e_p}_*R_p)$ is injective. We may assume $0\le i \le d$. Let $x_1,\dots,x_d$ be a $\operatorname{p}$-standard system of parameters for $R$. By Corollary \ref{corollary of local cohomology w.r.t. p-standard s.o.p.}, we have 
	\[
		\H_\m^i(F^\epsilon_*R_\infty)\cong\H_\m^0(\H_{(x_1,\dots,x_i)}^i(F^{\epsilon}_*R_\infty))
	\]
	and
	\[
		\H_{\m_p}^i(F^{e_p}_*R_p) \cong \H_{\m_p}^0(\H_{(x_1,\dots,x_i)}^i(F^{e_p}_*R_p))
	\]
	for almost all $p$. Considering the \v{C}ech complex, any element $\eta$ of $\H_{(x_1,\dots,x_i)}^i(F^\epsilon_*R_\infty)$  can be represented by
	\[
		\left[\frac{y}{(x_1\cdots x_i)^t}\right],
	\]
	where $y\in F^\epsilon_*R_\infty$ and $t\in \N$.  We show that \[
	\H_{(x_1,\dots,x_i)}^i(F^\epsilon_*R_\infty)\to \ulim_p \H_{(x_{1,p},\dots, x_{i,p})}^i(F^{e_p}_*R_p)
	\]
	is injective. Suppose that the image of $\eta$ in $\ulim_p \H_{(x_{1,p},\dots,x_{i,p})}^i(F^{e_p}_*R_p)$ equals zero. Let $x':=x_1\cdots x_i$. Then there exists $s_p\in \N$ such that $(x'_p)^{s_p}y_p\in (x_{1,p}^{s_p+t},\dots,x_{i,p}^{s_p+t})F^{e_p}_*R_p$. Hence, $(x'_p)^{s_pp^{e_p}}y_p^{p^{e_p}}\in (x_{1,p}^{(s_p+t)p^{e_p}},\dots,x_{i,p}^{(s_p+t)p^{e_p}})R_p$ for almost all $p$. Since $x_{1,p},\dots,x_{i,p}$ is a $\operatorname{p}$-standard sequence of $R_p$ for almost all $p$ by Proposition \ref{proposition of p-standard s.o.p. -> p-standard seq} and Proposition \ref{proposition approximations of p-standard s.o.p.}, we have $(x'_p)^{p^{e_p}}y_p^{p^{e_p}}\in (x_{1,p}^{(t+1)p^{e_p}},\dots,x_{i,p}^{(t+1)p^{e_p}})R_p$ for almost all $p$ by \cite[Proposition 2.4]{Kaw}. Therefore we have $x'_py_p\in (x_{1,p}^{t+1},\dots,x_{i,p}^{t+1})F^{e_p}_*R_p$ for almost all $p$. Hence, we have $x'y\in (x_1^{t+1},\dots,x_i^{t+1})F^{\epsilon}_*R_\infty$ and $\eta=0$ in $\H_{(x_1,\dots,x_i)}^i(F^\epsilon_*R_\infty)$. Then the conclusion follows from the following commutative diagram:
	\[
		\xymatrix{
			\H_\m^i(F^\epsilon_*R_\infty) \ar@{^{(}->}[r] \ar[d] & \H_{(x_1,\dots,x_i)}^i(F^\epsilon_*R_\infty) \ar@{^{(}->}[d] \\
			\ulim_p \H_{\m_p}^i(F^{e_p}_*R_p) \ar@{^{(}->}[r] & \ulim_p \H_{(x_{1,p},\dots,x_{i,p})}^i(F^{e_p}_*R_p)
		}.
	\]
\end{proof}
%%%%%%%%%%%%%%%%%%%%%%%%%%%%%%%%%%%%%%%%%%%%%%%%%%%%%%%%%%%%%%%%%%%%%%%%%%%%%%%%%%%%
\section{Ultra-$F$-purity}
%%%%%%%%%%%%%%%%%%%%%%%%%%%%%%%%%%%%%%%%%%%%%%%%%%%%%%%%%%%%%%%%%%%%%%%%%%%%%%%%%%%%
We introduce a new notion, ultra-$F$-pure singularities, and show that dense $F$-pure type descends under pure ring extensions.
\begin{setting} \label{setting of pairs}
	Let $(R,\ba^t)$ be a pair consisted of the following data:
	\begin{enumerate}
		\item $(R,\m)$ a reduced local ring essentially of finite type over $\C$ of dimension $d$,
		\item $\ba\subseteq R$ an ideal such that $\ba\cap R^\circ\neq \emptyset$,
		\item $t>0$ a real number.
	\end{enumerate}
\end{setting}
\begin{defn}
	With notation as in Setting \ref{setting of pairs}, $(R,\ba^t)$ is said to be {\it sharply ultra-$F$-pure} if for all $\epsilon_0 \in {^*\N}$, there exist $\epsilon\ge \epsilon_0$ and $f\in \ba^{\lceil(t\pi^{\epsilon}-1)\rceil}$ such that $fF^\epsilon:R\to R_\infty$ is pure. We simply say that $R$ is {\it ultra-$F$-pure} if $(R,R^t)$ is sharply ultra-$F$-pure.
\end{defn}
\begin{rem}
	\begin{enumerate}
		\item This definition depends on a choice of ultrafilter on $\mathcal{P}$ and isomorphism $\ulim_{p}\bar{\F_p}\cong \C$.
		\item $R$ is ultra-$F$-pure if and only if $R\to R^{\upf}$ is pure by Proposition \ref{proposition ultra-perfect closure limit}.
	\end{enumerate}
\end{rem}
\begin{eg}
	Let $R=(\C[x,y,z]/(x^3+y^3+z^3))_{(x,y,z)}$. If $R_p=(\bar{\F_p}[x,y,z]/(x^3+y^3+z^3))_{(x,y,z)}$, then $(R_p)_p$ is an approximation of $R$ for any non-principal ultrafilter $\mathcal{F}$ on $\mathcal{P}$ and any isomorphism $\ulim_p \bar{\F_p}\cong \C$. We observe that $R_p$ is $F$-pure if and only if $p \equiv 1 \pmod 3$. Then $R$ is ultra-$F$-pure if and only if $\{p\in \mathcal{P}|p\equiv 1 \pmod 3\}\in \mathcal{F}$ (cf. Proposition \ref{propositon dense F-pure type -> ultra-F-pure} and the proof of Proposition \ref{proposition quasi-Gorenstein + ultra-F-pure -> dense F-pure type}).
\end{eg}
\begin{lem}\label{lemma of field isomorpisms}
	Let $F$ be a subfield of $\C$ such that $F/\Q$ is finitely generated field extension. Given two field homomorphisms $f, g:F\to \C$. Then there exists an field automorphism $\alpha$ of $\C$ such that $g=\alpha \circ f$.
\end{lem}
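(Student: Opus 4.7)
The plan is to reduce the statement to an extension-of-isomorphisms question. Since $f$ and $g$ are nonzero ring homomorphisms out of a field, they are both injective, so $\phi := g \circ f^{-1} : f(F) \to g(F)$ is a well-defined field isomorphism between two subfields of $\C$. Any automorphism $\alpha$ of $\C$ that restricts to $\phi$ on $f(F)$ will automatically satisfy $\alpha \circ f = g$, so the task becomes extending $\phi$ to an automorphism of $\C$.

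Next I would exploit the scarcity of $F$ compared to $\C$. Both $f(F)$ and $g(F)$ are finitely generated extensions of $\Q$, hence have finite transcendence degree over $\Q$. Consequently $\C$ has transcendence degree $2^{\aleph_0}$ over each of them. Choose a transcendence basis $T_1$ of $\C$ over $f(F)$ and a transcendence basis $T_2$ of $\C$ over $g(F)$; by the cardinality count $|T_1| = |T_2| = 2^{\aleph_0}$, so any bijection $\sigma : T_1 \to T_2$ combines with $\phi$ to produce an isomorphism $\phi_1 : f(F)(T_1) \to g(F)(T_2)$ of purely transcendental extensions.

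Finally, since $T_i$ is a transcendence basis of $\C$ over the corresponding base, $\C$ is algebraic over $f(F)(T_1)$ and over $g(F)(T_2)$, and being algebraically closed is therefore an algebraic closure of each. By the standard theorem that an isomorphism between two fields lifts to an isomorphism between their algebraic closures, $\phi_1$ extends to an automorphism $\alpha$ of $\C$. By construction $\alpha$ restricts to $\phi$ on $f(F)$, hence satisfies $\alpha \circ f = g$. I do not foresee any substantive obstacle: the argument is a routine back-and-forth using cardinalities of transcendence bases together with the universal property of algebraic closures, and the only point worth double-checking is that removing a finite set from a transcendence basis of $\C$ over $\Q$ does not change its cardinality, which is immediate.
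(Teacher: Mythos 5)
Your proof is correct and rests on the same underlying tools as the paper's: transcendence degree counting and the extension theorem for isomorphisms into algebraically closed fields. The packaging differs. The paper factors the desired automorphism as $\alpha=\gamma\circ\beta$: it first chooses a transcendence basis $\{e_i\}$ of $F/\Q$, extends $\{f(e_i)\}$ and $\{g(e_i)\}$ to transcendence bases of $\C/\Q$, builds an automorphism $\beta$ of $\C$ matching these bases, and then extends the resulting $G$-isomorphism $\beta(f(F))\to g(F)$ (where $G=\Q(g(e_1),\dots,g(e_n))$) to an automorphism $\gamma$; the last extension step is exactly the algebraic-closure argument you make explicit. You instead work with $\phi=g\circ f^{-1}$ directly, choose transcendence bases $T_1$, $T_2$ of $\C$ over $f(F)$ and over $g(F)$, match them by a cardinality count, and invoke the algebraic-closure extension once. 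Your one-step version is arguably a bit cleaner and spells out the step the paper leaves implicit, while the paper's two-step version isolates the transcendental and algebraic adjustments; both are valid.
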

\begin{proof}
	Let $\{e_i\}_{i=1}^n$ be a transcendental basis of $F/\Q$. Take $\{a_\lambda\}, \{b_\lambda\} \subseteq \C$ such that $\{f(e_i)\}\cup \{a_\lambda\}$ and $\{g(e_i)\}\cup \{b_\lambda\}$ are transcendental bases of $\C/\Q$. Let $\beta:\C\to \C$ be an automorphism of $\C$ such that $\beta(f(e_i))=g(e_i)$ and $\beta(a_\lambda)=b_\lambda$ for any $i$ and $\lambda$. We define $G$ to be $\Q(g(e_1),\dots,g(e_n))$. Then $\beta(f(F))/G$ and $g(F)/G$ are finite extensions and $g\circ(\beta\circ f)^{-1}:\beta(f(F))\to g(F)$ is an isomorphism which fixes $G$. Hence, there exists an automorphism $\gamma$ of $\C$ such that $\gamma$ is an extension of $g\circ (\beta\circ f)^{-1}$. Then $\alpha:=\gamma\circ \beta$ is a desired automorphism.
\end{proof}
\begin{prop}\label{propositon dense F-pure type -> ultra-F-pure}
	With notation as in Setting \ref{setting of pairs}, if $(R,\ba^t)$ is of dense sharply $F$-pure type, then there exist a non-principal ultrafilter $\mathcal{F}$ on $\mathcal{P}$ and an isomorphism $\alpha:\ulim_p \bar{\F_p}\cong \C$ such that $(R_p,\ba_p^t)$ is sharply $F$-pure for almost all $p$.
\end{prop}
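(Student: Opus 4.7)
The plan is to choose $\mathcal{F}$ and $\alpha$ so that, for almost every $p$, the approximation $(R_p, \ba_p^t)$ arises by base change to $\bar{\F_p}$ and localization from a reduction $(R_\mu, \ba_\mu^t)$ with $\mu \in D$; sharp $F$-purity will then transfer via Proposition~\ref{proposition of non-f-pure ideal}.

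Fix a model $(A, R_A, \ba_A)$ of $(R, \ba)$ and a dense subset $D \subseteq \operatorname{Spm} A$ with $(R_\mu, \ba_\mu^t)$ sharply $F$-pure for every $\mu \in D$. Since $A$ is a finitely generated $\Z$-subalgebra of $\C$, hence a domain of characteristic zero, for any nonzero $a_1, \dots, a_n \in A$ and any finite set of primes $p_1, \dots, p_k$, the open set $D(a_1 \cdots a_n p_1 \cdots p_k) \subseteq \Spec A$ is nonempty and therefore meets $D$; this shows that, for any nonempty open $U \subseteq \Spec A$, the set $U \cap D$ contains maximal ideals of infinitely many residue characteristics. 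For each prime $p$ appearing as the characteristic of some $A/\mu$ with $\mu \in D$, I would pick $\mu_p \in D$ of characteristic $p$ (and an arbitrary $\mu_p$ of characteristic $p$ otherwise), together with an embedding $\iota_p : A/\mu_p \hookrightarrow \bar{\F_p}$.

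Next I would build the ultrafilter. The collection of subsets of $\mathcal{P}$ consisting of $\{p : \mu_p \in D\}$ together with $\{p : a \notin \mu_p\}$ for each nonzero $a \in A$ has the finite intersection property by the previous paragraph, so Zorn's lemma (as in the proof of Proposition~\ref{existence of ultrafilter}) extends it to a non-principal ultrafilter $\mathcal{F}$ on $\mathcal{P}$. The assignment $a \mapsto \ulim_p \iota_p(a \bmod \mu_p)$ then defines a ring embedding $\phi : A \hookrightarrow \ulim_p \bar{\F_p}$. To construct $\alpha$, fix any isomorphism $\alpha_0 : \ulim_p \bar{\F_p} \xrightarrow{\sim} \C$ (both sides are algebraically closed fields of characteristic zero of cardinality $2^{\aleph_0}$), and apply Lemma~\ref{lemma of field isomorpisms} to the finitely generated field extension $\operatorname{Frac}(A)/\Q$ with its two embeddings $\alpha_0 \circ \phi$ and the original inclusion into $\C$, producing an automorphism $\beta$ of $\C$ with $\beta \circ \alpha_0 \circ \phi$ equal to the given inclusion $A \hookrightarrow \C$; set $\alpha := \beta \circ \alpha_0$.

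With $(\mathcal{F}, \alpha)$ fixed, unwinding the construction of approximations from a finite presentation of $R$ over $A$ shows that, for $p$ with $\mu_p \in D$, one may take $R_p$ to be the localization of $R_{\mu_p} \otimes_{A/\mu_p} \bar{\F_p}$ at the maximal ideal over $\m_{\mu_p}$, and $\ba_p = \ba_{\mu_p} R_p$. The extension $R_{\mu_p} \to R_p$ is a flat local homomorphism with residue field extension $A/\mu_p \hookrightarrow \bar{\F_p}$, which is separable since $A/\mu_p$ is perfect. Since $(R_{\mu_p}, \ba_{\mu_p}^t)$ is sharply $F$-pure, Proposition~\ref{proposition of non-f-pure ideal} gives $\sigma(R_p, \ba_p^t) = \sigma(R_{\mu_p}, \ba_{\mu_p}^t) R_p = R_p$, so $(R_p, \ba_p^t)$ is sharply $F$-pure for almost all $p$. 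The main obstacle is the simultaneous coordination of $\mathcal{F}$ and $\alpha$: $\mathcal{F}$ must be compatible with the $\iota_p$ so that $A \to \ulim_p \bar{\F_p}$ is injective and $\mu_p \in D$ almost everywhere, while $\alpha$ must then be twisted by an automorphism of $\C$ so that approximations assembled from $(\mathcal{F}, \alpha)$ really match these reductions; the countability of $A$ and Lemma~\ref{lemma of field isomorpisms} are exactly what allows this.
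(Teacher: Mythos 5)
Your proposal follows the same overall strategy as the paper (coordinate $\mathcal{F}$ and $\alpha$ so that reductions modulo $p$ coming from $D$ become approximations, then transfer sharp $F$-purity via Proposition~\ref{proposition of non-f-pure ideal} using that $A/\mu_p\hookrightarrow\bar{\F_p}$ is separable), and the $\alpha$-construction via Lemma~\ref{lemma of field isomorpisms} is exactly right. However, there is a genuine gap in the ultrafilter construction.

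You \emph{first} fix an assignment $p\mapsto\mu_p$ (picking $\mu_p\in D$ when possible, otherwise arbitrary), and only \emph{then} invoke the finite intersection property of the sets $\{p:\mu_p\in D\}$ and $\{p:a\notin\mu_p\}$. But density of $D$ does not give you the FIP for a fixed arbitrary choice of $\mu_p$. What your density argument shows is that for each finite set of nonzero $a_i$, infinitely many $\mu\in D$ avoid all $a_i$; it does not show that the $\mu_p$ \emph{you already fixed} avoid all $a_i$ for infinitely many $p$. For a concrete failure, take $A=\Z[x]$, let $D$ be the set of closed points $(p,x-n)$ (which is dense), and choose $\mu_p=(p,x)$ for every $p$: then $\{p:x\notin\mu_p\}=\emptyset$, and the filter base is not even a filter base. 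The paper avoids this by coordinating the choice of $\mu_p$ with an enumeration of $A$: writing $A\setminus\{0\}=\{x_i\}_{i\ge 1}$ (possible since $A$ is countable), it inductively chooses $\mu_i\in D$ of strictly increasing residue characteristic with $x_1,\dots,x_{i-1}\notin\mu_i$. Then for each nonzero $a=x_j$ the set $\{p_i:a\notin\mu_i\}$ is cofinite in $\{p_i\}$, and any non-principal ultrafilter containing $\{p_i\}$ automatically contains all the sets $\{p:a\notin\mu_p\}$. Your proposal needs this diagonalization made explicit; the ``arbitrary'' choice does not suffice.

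A minor point: since $R$ is only essentially of finite type over $\C$, strictly speaking you should write $R=S_\p$ for a finite type $\C$-algebra $S$ and take a model of $(S,\bb,\p)$ with $\bb=\ba\cap S$, as the paper does, rather than speaking of a model of $(R,\ba)$ directly.
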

\begin{proof}
	Suppose that $R$ is a localization of a finitely generated $\C$-algebra $S$ at a prime ideal $\p$, and $\bb=\ba\cap S$. Since $(R,\ba^t)$ is of dense sharply $F$-pure type, there exists a model $(A,S_A,\p_A,\bb_A)$ such that there exists a subset $D$ of $\operatorname{Spm} A$ such that $D$ is dense in $\Spec A$ and for all $\mu\in \Spec A$, a pair $(R_\mu,\ba_\mu^t)$ is sharply $F$-pure. We define $\phi:\operatorname{Spm} A\to \N$ by $\phi(\mu):=\operatorname{char} A/\mu$ and we can write $A\setminus \{0\}=\{x_i\}_{i=1}^\infty$ since $A$ is countable. Inductively take a sequence $\{\mu_i\}_{i=1}^{\infty}\subseteq D$ such that $\phi(\mu_i)>\phi(\mu_{i-1})$ and $x_1,\dots, x_{i-1}\notin\mu_i$. Let $p_i:=\phi(\mu_i)$ and take a non-principal ultrafilter $\mathcal{F}$ on $\mathcal{P}$ such that $\{p_i|i\in \N\}\in \mathcal{F}$ (see Proposition \ref{existence of ultrafilter}). For any  $i\in\N$, we define $\gamma_{p_i}:A\to \bar{\F_{p_i}}$ to be the composite morphism $A\to A/{\mu_i} \to \bar{\F_{p_i}}$. Note that $\gamma_p$ is defined for almost all $p$ in this setting. Then we have a ring homomorphism $\ulim_p \gamma_p:A\to \ulim_p \bar{\F_p}$. Note that $\ulim_p \gamma_p$ is injective since for any $x\in A\setminus\{0\}$, there exists $i_0\in \N$ such that for any $i\ge i_0$, $x\notin \mu_{i}$ by construction. Since $\ulim_p \bar{\F_p}\cong \C$, there exists an isomorphism $\alpha$ such that the diagram
	\[
		\xymatrix{
			 &A \ar[ld]_-{\ulim_p\gamma_p} \ar@{^{(}->}[rd]&\\
			  \ulim_p \bar{\F_p}\ar[rr]^-{\alpha}& &\C
	  	}
	\]
	commutes by Lemma \ref{lemma of field isomorpisms}. Then $(\gamma_p)_{p}$ defined above coincides with one in \cite[Lemma 4.9]{Sch04}. Hence, approximation $S_p$ of $S$ with respect to $\mathcal{F}$ and $\alpha$ is isomorphic to $S_{\mu}\otimes \bar{\F_{p}}$ for almost all $p$. By Proposition \ref{proposition of non-f-pure ideal} and the proof of \cite[Proposition 5.5]{Yam}, $(R_{p_i},\ba_{p_i}^t)$ is sharply $F$-pure for all $i$. Hence, $(R_p,\ba_p^t)$ is sharply $F$-pure for almost all $p$.
\end{proof}
\begin{prop}\label{proposition F-pure for almost all p -> ultra-F-pure}
	With notation as in Setting \ref{setting of pairs}, suppose that $\ba=(f_1,\dots,f_n)$ and $(R_p,\ba_p^t)$ is sharply $F$-pure for almost all $p$. Then for any $\epsilon_0\in {^*\N}$, there exist $\epsilon \ge \epsilon_0$ and $\mu_1,\dots,\mu_n\in {^*\N}$ such that $\mu_1+\dots+\mu_n=\lceil t(\pi^\epsilon-1)\rceil$ and $fF^\epsilon:R\to R_\infty$ is pure, where $f=\prod_{i=1}^n f_i^{\mu_i}$. In particular, $(R,\ba^t)$ is sharply ultra-$F$-pure.
\end{prop}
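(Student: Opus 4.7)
My plan is to work $p$-by-$p$ in the approximation, extract a single monomial from the given element, and then take an ultraproduct. Fix $\epsilon_0 = \ulim_p e_{0,p} \in {^*\N}$. For each $p$ in a set of full measure under $\mathcal{F}$, sharp $F$-purity of $(R_p, \ba_p^t)$ yields some $e_p \ge e_{0,p}$ and an element $g_p \in \ba_p^{N_p}$ with $N_p := \lceil t(p^{e_p}-1) \rceil$ such that $\cdot F^{e_p}_* g_p : R_p \to F^{e_p}_* R_p$ is pure. Since $R_p$ is $F$-finite and Noetherian local, $F^{e_p}_* R_p$ is a finitely presented $R_p$-module, so purity upgrades to splitting: fix $\phi_p \in \Hom_{R_p}(F^{e_p}_* R_p, R_p)$ with $\phi_p(F^{e_p}_* g_p) = 1$.

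Next I would extract a monomial. Expand $g_p = \sum_\alpha c_{\alpha,p} \prod_i f_{i,p}^{\alpha_i}$, where $\alpha = (\alpha_1, \dots, \alpha_n) \in \N^n$ ranges over tuples with $\sum_i \alpha_i = N_p$ and $c_{\alpha,p} \in R_p$. Applying $\phi_p$ gives $\sum_\alpha \phi_p(F^{e_p}_*(c_{\alpha,p} \prod_i f_{i,p}^{\alpha_i})) = 1$, so by the locality of $R_p$ at least one summand is a unit; pick a corresponding index $\alpha(p)$, rescale $\phi_p$ so that this summand equals $1$, and write $\mu_{i,p} := \alpha(p)_i$, $m_p := \prod_i f_{i,p}^{\mu_{i,p}}$, $c_p := c_{\alpha(p),p}$. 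The assignment $\psi_p(F^{e_p}_* y) := \phi_p(F^{e_p}_*(c_p y))$ is then directly checked to be $R_p$-linear and satisfies $\psi_p(F^{e_p}_* m_p) = 1$, so $\cdot F^{e_p}_* m_p : R_p \to F^{e_p}_* R_p$ is split and hence pure. Setting $\epsilon := \ulim_p e_p \ge \epsilon_0$, $\mu_i := \ulim_p \mu_{i,p}$, and $f := \prod_i f_i^{\mu_i} = \ulim_p m_p$, we obtain $\sum_i \mu_i = \lceil t(\pi^\epsilon - 1) \rceil$ and $f \in \ba^{\lceil t(\pi^\epsilon - 1) \rceil}$ by the definition of $\ba^\nu$.

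The main obstacle is the last step: transferring the purity of the maps $\cdot F^{e_p}_* m_p$ through the ultraproduct to conclude that $fF^\epsilon : R \to R_\infty$ is pure. I plan to follow the template of the proof of \cite[Proposition 5.5]{Yam}. Concretely, for any finitely presented $R$-module $N$ with $R_p$-approximation $(N_p)$, purity of $\cdot F^{e_p}_* m_p$ gives injectivity of $N_p \to F^{e_p}_* R_p \otimes_{R_p} N_p$ for almost all $p$, and injectivity is inherited by the ultraproduct. Combining this with the faithful flatness of $R \hookrightarrow R_\infty$ and the standard identifications $N_\infty \cong R_\infty \otimes_R N$ and $\ulim_p (F^{e_p}_* R_p \otimes_{R_p} N_p) \cong F^\epsilon_* R_\infty \otimes_R N$ recalled in Subsection 2.2 yields injectivity of $N \to F^\epsilon_* R_\infty \otimes_R N$, which is precisely the purity of $fF^\epsilon$. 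The ``in particular'' clause is then immediate, since $f \in \ba^{\lceil t(\pi^\epsilon - 1) \rceil}$ witnesses sharp ultra-$F$-purity of $(R, \ba^t)$.
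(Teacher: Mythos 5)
Your proposal is correct and follows the same overall strategy as the paper: invoke sharp $F$-purity of $(R_p,\ba_p^t)$ to get a twisted Frobenius splitting at each $p$, use locality of $R_p$ to isolate a single monomial $m_p=\prod_i f_{i,p}^{m_{i,p}}$ that still splits, take ultraproducts of the exponents and the monomial, and then show purity of $fF^\epsilon\colon R\to R_\infty$. The one place you diverge is the final purity check. The paper appeals to Hochster's theorem \cite{Hoc} that for an excellent reduced Noetherian ring purity can be tested cyclically: given $I\subseteq R$ and $x\in R$ with $fF^\epsilon(x)\in F^\epsilon(I)R_\infty$, approximation gives $f_pF^{e_p}(x_p)\in I_p^{[p^{e_p}]}$ for almost all $p$, purity of $f_pF^{e_p}$ gives $x_p\in I_p$, and faithful flatness of $R\hookrightarrow R_\infty$ gives $x\in IR_\infty\cap R=I$. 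You instead test on finitely presented $R$-modules $N$, transferring injectivity of $N_p\to F^{e_p}_*R_p\otimes_{R_p}N_p$ through the ultraproduct via the identifications $\ulim_p N_p\cong R_\infty\otimes_R N$ and $\ulim_p(F^{e_p}_*R_p\otimes_{R_p}N_p)\cong F^\epsilon_*R_\infty\otimes_R N$ (both hold because $N$ is finitely presented and ultraproducts are exact and commute with finite direct sums), and then composing with the pure map $N\hookrightarrow R_\infty\otimes_R N$. Both routes are valid. The cyclic-purity route has fewer identifications to verify; your route avoids the need to invoke Hochster's theorem but requires you to note explicitly (i) that since $R$ is Noetherian it suffices to check purity on finitely generated (hence finitely presented) modules, and (ii) that the composite $N\to R_\infty\otimes_R N\to F^\epsilon_*R_\infty\otimes_R N$ really is the natural map $n\mapsto F^\epsilon_*f\otimes n$; both are routine but worth a sentence each.
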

\begin{proof}
	Take any $\epsilon_0=\ulim_p e_{0,p}\in {^*\N}$. Since $(R_p,\ba_p^t)$ is sharply $F$-pure for almost all $p$, we have
	\[
		\sum_{e\ge e_{0,p}} \sum_{\phi} \phi(F^e_*\ba_p^{\lceil t(p^e-1)\rceil})=R_p,
	\]
	for almost all $p$, where $\phi$ runs through all elements of $\Hom_{R_p}(F^e_*R_p,R_p)$. Hence, for almost all $p$, there exist $e_p\in \N$ and $\phi_p\in \Hom_{R_p}(F^{e_p}_*R_p,R_p)$ such that $\phi_p(F^{e_p}_*\ba_p^{\lceil t(p^{e_p}-1)\rceil})=R_p$. Since
	\[
		\phi_p(F^{e_p}_*\ba_p^{\lceil t(p^{e_p}-1) \rceil})=\sum_{\substack{m_1,\dots,m_n\\m_1+\dots+m_n=\lceil t(p^{e_p}-1)\rceil}}\phi_p(F^{e_p}_*f_{1,p}^{m_1}\dots f_{n,p}^{m_n}R_p),
	\]
	there exist $m_{1,p}, \dots, m_{n,p}\in \N$ such that $m_{1,p}+\dots+m_{n,p}=\lceil t(p^{e_p}-1)\rceil$ and
	\[
		\phi_p(F^{e_p}_*f_{1,p}^{m_{1,p}}\dots f_{n,p}^{m_{n,p}}R_p)=R_p.
	\]
	Let $f_p:=f_{1,p}^{m_{1,p}}\dots f_{n,p}^{m_{n,p}}$, $f:=\ulim_p f_p$, $\epsilon:=\ulim_p e_p$ and $\mu_i:=\ulim_p m_{i,p}$. Then we have $f_pF^{e_p}:R_p\to R_p$ is pure for almost all $p$. It is enough to show the cyclic purity by \cite{Hoc}. Take any ideal $I\subseteq R$ and $x\in R$ such that $fF^{\epsilon}(x)\in fF^\epsilon(I)R_\infty$. Then $f_pF^{e_p}(x_p)\in f_pI_p^{[p^{e_p}]}$ for almost all $p$. Since $f_pF^{e_p}:R_p\to R_p$ is pure for almost all $p$, $x_p\in I_p$ for almost all $p$. Then we have $x\in I=IR_\infty\cap R$ since $R\to R_\infty$ is faithfully flat, which completes the proof.
\end{proof}
\begin{defn}
	Let $R$ be a local normal $\Q$-Gorenstein domain and $r$ be the minimum positive integer such that $rK_R=\Div(f)$ is Cartier, and fix a canonical ideal $R(K_R)=\omega_R\subseteq R$. We define a canonical covering $\tilde{R}$ of $R$ to be a $\Z/r\Z$-graded $R$-algebra
	\[
		\bigoplus_{i=0}^{r-1} \omega_R^{(i)}t^i,
	\]
	where $\omega_R^{(i)}$ is the $i$-th symbolic power of $\omega_R$ and $t^r=1/f$.
\end{defn}
\begin{rem}
	A canonical covering $\tilde{R}$ of $R$ is a local normal quasi-Gorenstein domain.
\end{rem}
\begin{lem}\label{Lemma of the non-standard hull of canonical coverings}
	Let $R$ be a $\Q$-Gorenstein normal local domain essentially of finite type over $\C$. Let $r$ be the minimum positive integer such that $rK_R$ is Cartier and let $\tilde{R}$ be a canonical cover of $R$. Then $(\tilde{R})_\infty\cong R_\infty\otimes_R \tilde{R}$.
\end{lem}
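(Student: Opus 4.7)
The plan is to promote the canonical cover construction to the approximation level and then apply the base-change identity for the $R$-hull of a module-finite $R$-algebra.

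First, I would observe that $\tilde{R}$ is module-finite over $R$: as an $R$-module, $\tilde{R} = \bigoplus_{i=0}^{r-1} \omega_R^{(i)} t^i$ is a finite direct sum of finitely generated rank-one reflexive $R$-modules. Combined with the standard transfer of normality and $\Q$-Gorensteinness under approximation, the \L o\'s transfer principle guarantees that for almost all $p$, $R_p$ is a normal local domain with $rK_{R_p}$ Cartier, the chosen element $f$ admits an approximation $(f_p)$ with $\Div(f_p) = rK_{R_p}$, and the chosen canonical ideal $\omega_R$ admits an approximation $(\omega_{R_p})$ which is a canonical ideal of $R_p$.

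Next, I would form the canonical cover $\tilde{R}_p := \bigoplus_{i=0}^{r-1} \omega_{R_p}^{(i)} t^i$ with $t^r = 1/f_p$ for almost all $p$. Its ring structure, given by the symbolic product maps and the relation $t^r = 1/f_p$, is literally the approximation of the ring structure on $\tilde{R}$, so $(\tilde{R}_p)_p$ qualifies as an $R$-approximation of $\tilde{R}$ as an $R$-algebra. Hence $(\tilde{R})_\infty = \ulim_p \tilde{R}_p$. Taking ultraproducts summand-by-summand and invoking the standard identity $\ulim_p M_p \cong R_\infty \otimes_R M$ for any finitely generated $R$-module $M$, we conclude
\[
(\tilde{R})_\infty \;\cong\; \ulim_p \tilde{R}_p \;\cong\; \bigoplus_{i=0}^{r-1} (R_\infty \otimes_R \omega_R^{(i)}) t^i \;\cong\; R_\infty \otimes_R \tilde{R},
\]
with compatible algebra structures because multiplication in $\tilde{R}_p$ is the base change of multiplication in $\tilde{R}$.

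The main obstacle is verifying that $(\omega_{R_p}^{(i)})_p$ is really an approximation of $\omega_R^{(i)}$, i.e., that the formation of symbolic powers commutes with approximation. This is not automatic, since symbolic powers are defined via reflexivization. It can be settled by comparing the two modules at each height-one prime of $R_p$, using that $\omega_R^{(i)}$ is rank-one $S_2$ on the normal domain $R$, that normality and $S_2$-ness transfer to $R_p$ for almost all $p$, and that $\omega_R^{(i)}$ coincides with $\omega_R^{\otimes i}$ on the regular locus. Once this compatibility is in hand, the remainder of the argument reduces to the module-finite case of the base-change identity for the $R$-hull.
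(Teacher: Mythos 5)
Your core idea is the same one the paper uses: the decisive fact is that $\tilde{R}$ is module-finite over $R$, so the relative $R$-hull of $\tilde{R}$ (taken as an $R$-algebra via $R$-approximation) coincides with the ultraproduct of its $R$-approximations as a finite $R$-module, which is the familiar identity $\ulim_p T_p \cong R_\infty \otimes_R \tilde{R}$. The paper makes exactly this move, phrased at the level of a finite type model $\tilde{S}$ of $\tilde{R}$: ``the reductions modulo $p\gg0$ of $\tilde{S}$ as a finite $S$-module coincide with those as a ring of finite type over $\C$.''

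Where you depart from the paper is in trying to identify the approximation explicitly as the canonical cover of $R_p$, i.e.\ setting $\tilde{R}_p = \bigoplus_{i=0}^{r-1}\omega_{R_p}^{(i)}t^i$ and then showing this sequence is an $R$-approximation of $\tilde{R}$. This is a genuine detour: the lemma as stated never needs to know that $(\tilde{R})_p$ is itself a canonical cover of $R_p$. Once you know $\tilde{R}$ is module-finite, the coincidence of the two kinds of approximation already gives $(\tilde{R})_\infty \cong R_\infty\otimes_R\tilde{R}$, and you can drop the entire third paragraph of your proposal. The detour is what creates your ``main obstacle'' about symbolic powers commuting with approximation; the paper simply never has to face it. Your sketch of how to resolve that obstacle (reduce to height one, use that $\omega_R^{(i)}$ is rank-one reflexive, invoke $S_2$) is plausible and would be the right strategy if one wanted the stronger assertion that the approximations of $\tilde{R}$ are canonical covers of the $R_p$, but it is unverified in your proposal and unnecessary for the conclusion. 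Rewriting along the lines ``$\tilde{R}$ is module-finite, choose $R$-algebra generators which are also $R$-module generators, observe the resulting $R$-approximation is simultaneously a module approximation, apply $\ulim_p M_p\cong R_\infty\otimes_R M$'' yields a shorter, gap-free argument that matches the paper's.
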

\begin{proof}
	Let $S$ be a normal $\Q$-Gorenstein domain of finite type over $\C$ such that $\omega_S^{(r)}$ is free and $\p\in\Spec S$ such that $R\cong S_\p$. Let $\tilde{S}\cong \bigoplus_{i=0}^{r-1}\omega_S^{(i)}t^{i}$ such that $(\tilde{S})_\p\cong \tilde{R}$. Take a $\Z$-subalgebra $A$ of $\C$ such that there exist models $(A,S_A)$ and $(A,\tilde{S}_A)$ as in \cite[Theorem 3.8]{Yam22a}. Since $(\tilde{R})_p\cong ((\tilde{S})_p)_{\p_p}$ for almost all $p$, it is enough to show $(\tilde{R})_\infty \cong R_\infty \otimes_S \tilde{S} $. This follows from the fact that the reductions modulo $p \gg 0$ of $\tilde{S}$ as a finite $S$-module coincide with those as a ring of finite type over $\C$.
\end{proof}
\begin{prop}\label{proposition canonical covering ultra-F-pure}
	With notation as in Setting \ref{setting of pairs}, suppose that $R$ is $\Q$-Gorenstein normal, $(R,\ba^t)$ is sharply ultra-$F$-pure, and $\tilde{R}$ is a canonical covering of $R$. Then $(\tilde{R},(\ba \tilde{R})^t)$ is sharply ultra-$F$-pure.
\end{prop}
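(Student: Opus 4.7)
The plan is, given the sharp ultra-$F$-purity of $(R,\ba^t)$, to first arrange that the Frobenius exponent $\epsilon$ satisfies $r\mid\pi^\epsilon-1$ where $r$ is the index of $K_R$, and then base-change the pure map from $R$ to $\tilde R$ using Lemma~\ref{Lemma of the non-standard hull of canonical coverings}.

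Fix $\epsilon_0\in{^*\N}$. Sharp ultra-$F$-purity yields $\epsilon\ge\epsilon_0$ and $f\in\ba^{\lceil t(\pi^\epsilon-1)\rceil}$ with $fF^\epsilon\colon R\to F^\epsilon_*R_\infty$ pure. For almost all $p$ we have $\gcd(p,r)=1$, so the multiplicative order $d_p$ of $p$ modulo $r$ is defined almost everywhere; set $d:=\ulim_p d_p\in{^*\N}$, giving $r\mid\pi^{d\epsilon}-1$. Iterating by composing $fF^\epsilon$ with its Frobenius pushforwards $F^{k\epsilon}_*(fF^\epsilon)$ for $k=1,\dots,d-1$ yields a pure map $f'F^{d\epsilon}\colon R\to F^{d\epsilon}_*R_\infty$ with $f'=f^{1+\pi^\epsilon+\cdots+\pi^{(d-1)\epsilon}}$; the telescoping identity $(\pi^\epsilon-1)(1+\pi^\epsilon+\cdots+\pi^{(d-1)\epsilon})=\pi^{d\epsilon}-1$ places $f'$ in $\ba^{\lceil t(\pi^{d\epsilon}-1)\rceil}$. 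Replacing $(\epsilon,f)$ by $(d\epsilon,f')$, we may assume $r\mid\pi^\epsilon-1$.

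With this divisibility, direct computation shows that for $y\in\omega_R^{(i)}$ one has $(yt^i)^{\pi^\epsilon}\in\omega_R^{(i)}t^i$, so the ultra-Frobenius $F^\epsilon$ preserves the $\Z/r\Z$-grading of $\tilde R=\bigoplus_{i=0}^{r-1}\omega_R^{(i)}t^i$. Combining this with Lemma~\ref{Lemma of the non-standard hull of canonical coverings} and the reflexivity of the $\omega_R^{(i)}$, one constructs an $\tilde R$-linear isomorphism
\[
\Phi\colon\tilde R\otimes_R F^\epsilon_*R_\infty\xrightarrow{\sim}F^\epsilon_*\tilde R_\infty,\qquad x\otimes F^\epsilon_*a\mapsto F^\epsilon_*(x^{\pi^\epsilon}a),
\]
where $\tilde R$ acts on the source via its left factor and on the target via the Frobenius-twisted action. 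Under $\Phi$, the $\tilde R$-linear base change of $fF^\epsilon\colon R\to F^\epsilon_*R_\infty$ along $R\to\tilde R$ is identified with the map $fF^\epsilon\colon\tilde R\to F^\epsilon_*\tilde R_\infty$ (viewing $f\in R\subseteq\tilde R$). Since base change preserves purity, the latter is $\tilde R$-pure; taking $\tilde\epsilon=\epsilon$ and $\tilde f=f\in(\ba\tilde R)^{\lceil t(\pi^\epsilon-1)\rceil}$ completes the proof.

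The two main technical points are, first, justifying the ultra-level iteration: the pushforwards $F^{k\epsilon}_*(fF^\epsilon)$ are pure provided the $R_\infty$-linear extension of $fF^\epsilon$ is $R$-pure, which is not automatic from the $R$-purity of its restriction to $R$, and which I expect to handle by descending to characteristic~$p$, performing the classical char-$p$ iteration on almost all $R_p$, and then ultralifting via Proposition~\ref{proposition F-pure for almost all p -> ultra-F-pure}. Second, establishing that $\Phi$ is an isomorphism (and not merely a natural surjection) requires both the divisibility $r\mid\pi^\epsilon-1$ and the reflexivity of the $\omega_R^{(i)}$, the latter used to glue component-wise isomorphisms across the non-smooth locus.
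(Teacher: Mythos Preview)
Your approach has a genuine gap at the claimed isomorphism $\Phi$. The map $\Phi\colon \tilde R\otimes_R F^\epsilon_*R_\infty\to F^\epsilon_*\tilde R_\infty$, $x\otimes F^\epsilon_*a\mapsto F^\epsilon_*(x^{\pi^\epsilon}a)$, is the ultraproduct of the relative Frobenius maps $\tilde R_p\otimes_{R_p}F^{e_p}_*R_p\to F^{e_p}_*\tilde R_p$, and the relative Frobenius for a finite extension is an isomorphism if and only if the extension is \'etale. The canonical cover $R\to\tilde R$ is \'etale only away from the non-Gorenstein locus of $R$, so $\Phi$ fails to be an isomorphism whenever $r>1$. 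Reflexivity of the $\omega_R^{(i)}$ does not rescue this: while the target $F^\epsilon_*\tilde R_\infty$ is $S_2$, the source $\bigoplus_i\omega_R^{(i)}\otimes_R F^\epsilon_*R_\infty$ generally is not, since tensor products of reflexive modules need not be reflexive. What reflexivity gives is only that $\Phi$ factors as the natural map to the reflexive hull followed by an isomorphism, but the map to the reflexive hull kills torsion and is not pure, so purity of the base-changed map $\tilde R\to\tilde R\otimes_R F^\epsilon_*R_\infty$ does not pass to the composite $\tilde R\to F^\epsilon_*\tilde R_\infty$. The iteration step is also unresolved: your proposed fix by descent to characteristic $p$ requires that $(R_p,\ba_p^t)$ be sharply $F$-pure for almost all $p$, but sharp ultra-$F$-purity of $(R,\ba^t)$ does not provide this---indeed, establishing that implication in the $\Q$-Gorenstein case is essentially the content of Proposition~\ref{proposition quasi-Gorenstein + ultra-F-pure -> dense F-pure type}, so it cannot be invoked here.

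The paper's proof proceeds quite differently and sidesteps both problems. It does not arrange $r\mid\pi^\epsilon-1$; it uses only that $p\nmid r$ for almost all $p$, which ensures the ultra-Frobenius carries nonzero $\Z/r\Z$-degrees to nonzero degrees, so the projection $\operatorname{pr}_0$ to degree $0$ is compatible with $\cdot F^\epsilon_*f$. Rather than base-changing, the paper tests purity of $\cdot F^\epsilon_*f\colon\tilde R\to F^\epsilon_*\tilde R_\infty$ via the injective-hull criterion: tensoring the resulting commutative square with $\H^d_\m(\omega_R)$ and using $\H^d_\m(\omega_R)\otimes_R\tilde R\cong\H^d_{\tilde\m}(\omega_{\tilde R})$, purity reduces to injectivity on the socle. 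The key input is Hara's identity $\operatorname{Soc}_R\H^d_\m(\omega_R)=\operatorname{Soc}_{\tilde R}\H^d_{\tilde\m}(\omega_{\tilde R})$, which places the relevant socle entirely in degree $0$; the degree-$0$ projection then reduces the question to purity of $\cdot F^\epsilon_*f\colon R\to F^\epsilon_*R_\infty$, which is the hypothesis.
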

\begin{proof}
	For any $\epsilon_0\in {^*\N}$, there exist $\epsilon=\ulim_p e_p \in {^*\N}$ and $f\in \ba^{\lceil t(\pi^\epsilon-1) \rceil}$ such that $fF^\epsilon: R\to R_\infty$ is pure.
	Let $r$ be the minimum positive integer such that $rK_R$ is Cartier. Let $x\in \tilde{S}$ be a homogeneous element with $\deg x=i$ and $\epsilon \in {^*\N}$. Then $F^\epsilon(x)$ is a homogeneous element of degree $i \pi^{\epsilon}\mod r$, where $j\equiv i \pi^{\epsilon} \mod r$ if and only if $j\equiv ip^{e_p}\mod r$ for almost all $p$. Since $p$ does not divide $r$ for almost all $p$, if $i\not\equiv 0 \mod r$, then we have $i\pi^{\epsilon} \not\equiv 0 \mod r$. Hence, we have the commutative diagram
	\[
	\xymatrix{
		\tilde{R} \ar[r]^-{\cdot F^\epsilon_*f} \ar[d]_{\operatorname{pr}_0}& F^\epsilon_* (\tilde{R})_\infty \ar[d]^{F^\epsilon_*\operatorname{pr}_0} \\
		R \ar[r]^{\cdot F^\epsilon_* f} & F^\epsilon_*R_\infty
	},
	\]
	where $\operatorname{pr}_0$ are the 0-th projections with respect to $\Z/r\Z$-grading induced by the definition of $\tilde{R}$ and Lemma \ref{Lemma of the non-standard hull of canonical coverings}. Tensoring the above diagram with $H_{\m}^d(\omega_R)$, we have
	\[
	 	\xymatrix@C=60pt{
	 	\H_{\m}^d(\omega_R)\otimes_R\tilde{R} \ar[r]^-{\id\otimes(\cdot F^{\epsilon}_*f)} \ar[d] & \H_\m^d(\omega_R)\otimes_R F^\epsilon_* (\tilde{R}_\infty) \ar[d]\\
	 	\H_\m^d(\omega_R) \ar[r]^-{\id \otimes (\cdot F^\epsilon_*f)} & \H_\m^d(\omega_R)\otimes_R F^\epsilon_*R _\infty	
 	}.
	\]
	Note that $\H_\m^d(\omega_R)\otimes_R \tilde{R}\cong \H_\m^d(\omega_{\tilde{R}})$. Take $\eta\in \H_\m^d(\omega_{R})\otimes_R \tilde{R}$ such that $(\id\otimes(\cdot F^\epsilon_*f))(\eta)=0$. Since $\operatorname{Soc}_R \H_\m^d(\omega_R)=\operatorname{Soc}_{\tilde{R}} \H_\m^d(\omega_{\tilde{R}})$ by \cite[Lemma 2.3]{Hara}, we may assume that $\eta\in \operatorname{Soc}_R \H_\m^d(\omega_R)$. Since $fF^\epsilon: R\to R_\infty$ is pure, the bottom horizontal morphism is injective and we have $\eta=0$. Hence, the top horizontal morphism is also injective.
\end{proof}
\begin{prop}\label{propostion pure subring of ultra-F-pure is ultra-F-pure}
	With notation as in Setting \ref{setting of pairs}, suppose that $R\to S$ is a pure local $\C$-algebra homomorphism between reduced local rings essentially of finite type over $\C$. If $(S_p,(\ba_p S_p)^t)$ is sharply $F$-pure for almost all $p$, then $(R,\ba^t)$ is sharply ultra-$F$-pure.
\end{prop}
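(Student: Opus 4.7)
The strategy is to deduce the conclusion from Proposition \ref{proposition F-pure for almost all p -> ultra-F-pure} applied to the pair $(S,(\ba S)^t)$, combined with two standard facts about purity: the composition of pure maps is pure, and if a composite $A\to B\to C$ is pure then so is $A\to B$.

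First I would fix generators $\ba=(f_1,\dots,f_n)$, so that $\ba S$ is generated by the images of the same $f_i$ in $S$. Given any $\epsilon_0\in{^*\N}$, Proposition \ref{proposition F-pure for almost all p -> ultra-F-pure} applied to $(S,(\ba S)^t)$ produces $\epsilon\ge\epsilon_0$ and non-standard integers $\mu_1,\dots,\mu_n\in{^*\N}$ with $\mu_1+\cdots+\mu_n=\lceil t(\pi^\epsilon-1)\rceil$ such that $gF^\epsilon:S\to S_\infty$ is pure, where $g:=\prod_{i=1}^n f_i^{\mu_i}\in (\ba S)^{\lceil t(\pi^\epsilon-1)\rceil}$. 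Since the $f_i$ originate in $R$, I would lift this product to $f:=\prod_{i=1}^n f_i^{\mu_i}\in \ba^{\lceil t(\pi^\epsilon-1)\rceil}\subseteq R_\infty$, whose image under the natural homomorphism $R_\infty\to S_\infty$ (induced from the $R$-approximation of $S$) equals $g$.

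The key observation is the commutativity of the square whose horizontal arrows are $fF^\epsilon:R\to R_\infty$ and $gF^\epsilon:S\to S_\infty$ and whose vertical arrows are the structure map $R\to S$ and its induced ultraproduct map $R_\infty\to S_\infty$. Commutativity is a routine verification from the definitions: applying approximations of the structure map to an approximation of $r\in R$ produces an approximation of its image in $S$, and the ultra-Frobenius is defined termwise and commutes with ring homomorphisms. Granted this, the composite $R\to S\xrightarrow{gF^\epsilon} S_\infty$ is pure as a composition of two pure maps, and hence so is the equal composite $R\xrightarrow{fF^\epsilon} R_\infty\to S_\infty$; purity of a composition implies purity of its first factor, so $fF^\epsilon:R\to R_\infty$ is pure. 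As $\epsilon_0\in{^*\N}$ was arbitrary and $f\in\ba^{\lceil t(\pi^\epsilon-1)\rceil}$, this shows $(R,\ba^t)$ is sharply ultra-$F$-pure.

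I do not foresee a serious obstacle here: the only technical point is the compatibility of approximations with the natural map $R_\infty\to S_\infty$, which is essentially built into the $R$-hull construction recalled in the preliminaries, and the remainder is bookkeeping.
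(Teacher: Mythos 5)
Your proposal is correct and matches the paper's own argument step for step: apply Proposition \ref{proposition F-pure for almost all p -> ultra-F-pure} to $(S,(\ba S)^t)$ with the generators of $\ba$ coming from $R$, so the resulting element lives in $\ba^{\lceil t(\pi^\epsilon-1)\rceil}\subseteq R_\infty$, then use the commutative square and the two purity facts. You are a bit more explicit than the paper about why the element can be taken in $\ba^{\lceil t(\pi^\epsilon-1)\rceil}$ rather than merely $(\ba S)^{\lceil t(\pi^\epsilon-1)\rceil}$, which is a worthwhile clarification, but the route is the same.
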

\begin{proof}
	By Proposition \ref{proposition F-pure for almost all p -> ultra-F-pure}, for any $\epsilon_0\in {^*\N}$, there exist $\epsilon\ge \epsilon_0$ and $f\in \ba^{\lceil t(\pi^\epsilon-1) \rceil}$ such that $fF^\epsilon:S\to S_\infty$ is pure. Then we have a commutative diagram
	\[
		\xymatrix{
			R \ar[r]^-{fF^\epsilon} \ar[d] & R_\infty \ar[d] \\
			S \ar[r]^-{fF^\epsilon} & S_\infty
	}.
	\]
	Since $R\to S$ and $fF^\epsilon:S\to S_\infty$ are pure, $fF^\epsilon:R\to R_\infty$ is also pure. Therefore, $(R,\ba^t)$ is sharply ultra-$F$-pure.
\end{proof}
\begin{prop}\label{proposition quasi-Gorenstein + ultra-F-pure -> dense F-pure type}
	With notation as in Setting \ref{setting of pairs}, suppose that $R$ is quasi-Gorenstein and $(R,\ba^t)$ is sharply ultra-$F$-pure. Then $(R,\ba^t)$ is of dense sharply $F$-pure type.
\end{prop}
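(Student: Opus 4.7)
The plan is to exploit the quasi-Gorenstein hypothesis via local duality, which reduces sharp $F$-purity (and sharp ultra-$F$-purity) to the non-vanishing of a single socle element in top local cohomology. Combined with the injection of Proposition \ref{proposition local cohomology ultraproducts injective}, this will descend the ultra-level purity to sharp $F$-purity of the approximation for almost all $p$, from which we deduce dense sharply $F$-pure type.

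First, since $R$ is quasi-Gorenstein, $\omega_R\cong R$, so (after passing to the completion if needed) $H_\m^d(R)\cong E_R(R/\m)$ has a one-dimensional socle. Fix a socle generator $\eta\in H_\m^d(R)$ together with an approximation $(\eta_p)$; for almost all $p$, $R_p$ is also quasi-Gorenstein, and I would arrange $\eta_p$ to be a socle generator of $H_{\m_p}^d(R_p)$ (using Lemma \ref{lemma of approximations of Ext} and local duality to transfer $\omega_R\cong R$ to $\omega_{R_p}\cong R_p$ for almost all $p$). The key criterion from Matlis duality is that a map $\phi\colon R\to M$ is pure iff the induced map $H_\m^d(R)\to H_\m^d(R)\otimes_R M$ is injective, iff $\eta\otimes 1\ne 0$ (by essentiality of the one-dimensional socle in the Artinian injective hull), and analogously in characteristic $p$.

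Next, sharp ultra-$F$-purity produces, for every $\epsilon_0\in {^*\N}$, some $\epsilon=\ulim_p e_p\ge \epsilon_0$ and $f=\ulim_p f_p \in \ba^{\lceil t(\pi^\epsilon-1)\rceil}$ with $fF^\epsilon\colon R\to R_\infty$ pure; by the socle criterion, $F^\epsilon_* f\cdot \eta \ne 0$ in $H_\m^d(F^\epsilon_* R_\infty)$. Applying Proposition \ref{proposition local cohomology ultraproducts injective} to $F^\epsilon_* R_\infty$, this element maps injectively into $\ulim_p H_{\m_p}^d(F^{e_p}_* R_p)$, where its image is represented by $(F^{e_p}_* f_p\cdot \eta_p)$. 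Hence there is a set $T\in \mathcal F$ on which $F^{e_p}_* f_p\cdot \eta_p\ne 0$ in $H_{\m_p}^d(F^{e_p}_* R_p)$. Since $f_p\in \ba_p^{\lceil t(p^{e_p}-1)\rceil}$ for almost all $p$ by the approximation of $\ba$, the socle criterion for the quasi-Gorenstein ring $R_p$ yields purity of $R_p\to F^{e_p}_* R_p$ sending $1\mapsto F^{e_p}_* f_p$, and Proposition \ref{proposition of non-f-pure ideal}(1) then gives that $(R_p,\ba_p^t)$ is sharply $F$-pure for $p$ in a set belonging to $\mathcal F$.

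Finally, to upgrade ``sharply $F$-pure for almost all $p$'' to ``dense sharply $F$-pure type'', I would use that the approximation $R_p$ is built from a model $(A,R_A,\ba_A,\p_A)$ via a sequence of maximal ideals $\mu_p\subseteq A$ for which the composition $A\hookrightarrow \C\cong \ulim_p \bar{\F_p}$ (obtained from the isomorphism $\alpha$) factors through $\ulim_p A/\mu_p$; injectivity of this composition forces that for every nonzero $a\in A$, the set $\{p:a\notin \mu_p\}$ belongs to $\mathcal F$. Intersecting this set with the $F$-purity set from the previous step produces, for each $a\in A\setminus\{0\}$, a $\mu_p$ with $a\notin \mu_p$ and $(R_{\mu_p},(\ba R_{\mu_p})^t)$ sharply $F$-pure; hence $D:=\{\mu_p:(R_p,\ba_p^t)\text{ is sharply $F$-pure}\}$ meets every nonempty principal open of $\Spec A$ and is therefore dense, witnessing dense sharply $F$-pure type. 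The main obstacle I anticipate is carefully tracking the socle criterion through the twisted $R$-module structure on $F^\epsilon_* R_\infty$, especially since $R_\infty$ is not Noetherian, so standard Matlis duality must be applied with care; once this is in place, the remainder is a clean combination of Proposition \ref{proposition local cohomology ultraproducts injective}, the ideal approximation $\ba^\epsilon$, and the injectivity built into the ultra-embedding of $A$.
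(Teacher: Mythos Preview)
Your proposal is correct and follows essentially the same approach as the paper: both arguments use the quasi-Gorenstein socle criterion to reduce purity to non-vanishing of a socle generator in $\H_\m^d$, invoke Proposition~\ref{proposition local cohomology ultraproducts injective} to push this non-vanishing down to almost all $p$, and then compare approximations with reductions modulo $p$ to conclude dense sharply $F$-pure type. The only cosmetic difference is that the paper phrases the middle step by contradiction (assuming $(R_p,\ba_p^t)$ is not sharply $F$-pure for almost all $p$ and producing a bad $\epsilon_0$), whereas you argue directly; your final paragraph also spells out the density argument that the paper compresses into a one-line reference to Proposition~\ref{proposition of non-f-pure ideal} and the proof of Proposition~\ref{propositon dense F-pure type -> ultra-F-pure}.
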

\begin{proof}
	Suppose that $(R_p,\ba_p^t)$ is not sharply $F$-pure for almost all $p$. Then, for almost all $p$, there exists $e_{0,p}\in \N$ such that for any $e\ge e_{0,p}$ and any $f\in \ba_p^{\lceil t(p^e-1)\rceil}$, $fF^e:R\to R$ is not pure. Let $\epsilon_0:=\ulim_p e_{0,p}$. Since $(R,\ba^t)$ is sharply ultra-$F$-pure, there exist $\epsilon=\ulim_p e_p\ge \epsilon_{0}$ and $f\in \ba^{\lceil t(\pi^\epsilon-1) \rceil}$ such that $fF^\epsilon:R\to R_\infty$ is pure. Then we have a commutative diagram
	\[
		\xymatrix@C=60pt{
		\H_\m^d(R) \ar@{^{(}->}[r]^-{\cdot F^{\epsilon}_*f} \ar[d] & \H_\m^d(F^\epsilon_* R_\infty) \ar@{^{(}->}[d]\\
		\ulim_p \H_{\m_p}^d(R_p) \ar[r]^-{\cdot(\ulim_p F^{e_p}_*f_p) } & \ulim_p \H_{\m_p}^d(F^{e_p}_*R_P)
	},
	\]
	where the injectivity of the right vertical morphism follows from Proposition \ref{proposition local cohomology ultraproducts injective}.  Let $\eta$ be a nonzero element of $\operatorname{Soc}_R \H_\m^d(R)$. Then $\eta_p\in \operatorname{Soc}_{R_p}\H_{\m_p}^d(R_p)$ for almost all $p$. Since $f_pF^{e_p}:R_p \to R_p$ is not pure for almost all $p$, the image of $\ulim_p \eta_p$ in $\ulim_p \H_{\m_p}^d(F^{e_p}_*R_p)$ is zero. This is a contradiction. Hence, $(R_p,\ba_p^t)$ is sharply $F$-pure for almost all $p$. Comparing approximations with reductions modulo $p$, $(R,\ba^t)$ is of dense sharply $F$-pure type (cf. Proposition \ref{proposition of non-f-pure ideal} and the proof of Proposition \ref{propositon dense F-pure type -> ultra-F-pure}).
\end{proof}
\begin{thm}
	With notation as in Setting \ref{setting of pairs}, suppose that $R$ is $\Q$-Gorenstein normal, $S$ is a reduced local ring essentially of finite type $\C$, and $R\to S$ is a pure local $\C$-algebra homomorphism. If $(S,(\ba S)^t)$ is of dense sharply $F$-pure type, then $(R,\ba^t)$ is also of dense sharply $F$-pure type.
\end{thm}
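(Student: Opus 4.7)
The strategy is to chain the propositions already established in this section and then handle one final descent step.

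First, since $(S,(\ba S)^t)$ is of dense sharply $F$-pure type, Proposition \ref{propositon dense F-pure type -> ultra-F-pure} supplies a non-principal ultrafilter $\mathcal{F}$ on $\mathcal{P}$ and an isomorphism $\alpha:\ulim_p\bar{\F_p}\cong\C$ with respect to which $(S_p,(\ba_p S_p)^t)$ is sharply $F$-pure for almost all $p$. Fix these. Proposition \ref{propostion pure subring of ultra-F-pure is ultra-F-pure} applied to the pure map $R\to S$ then gives that $(R,\ba^t)$ is sharply ultra-$F$-pure. Let $\tilde R$ be a canonical covering of $R$, and let $r$ be the index of $K_R$. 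By Proposition \ref{proposition canonical covering ultra-F-pure}, $(\tilde R,(\ba\tilde R)^t)$ is sharply ultra-$F$-pure, and since $\tilde R$ is quasi-Gorenstein, Proposition \ref{proposition quasi-Gorenstein + ultra-F-pure -> dense F-pure type} yields that $(\tilde R,(\ba\tilde R)^t)$ is of dense sharply $F$-pure type.

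To descend back to $R$, note that $R\hookrightarrow\tilde R$ splits over $R$ by the projection $\pi$ onto the $0$-th graded piece in the $\Z/r\Z$-grading, and that this splitting persists under reduction modulo any prime $p\nmid r$. Choose a model simultaneously compatible with $R$, $\tilde R$, and $\ba$. The dense subset of $\operatorname{Spm}A$ witnessing sharp $F$-purity of $(\tilde R_\mu,(\ba_\mu\tilde R_\mu)^t)$, intersected with the cofinite subset of $\mu$ whose residue characteristic is coprime to $r$, is still dense; on this set $R_\mu\hookrightarrow\tilde R_\mu$ is a split inclusion of $F$-finite reduced local rings. For such $\mu$, sharp $F$-purity descends in the $\sigma$-formulation of Proposition \ref{proposition of non-f-pure ideal}(1): a witness $\sum_j\psi_j(F^{e_j}_*f_j)=1$ with $\psi_j\in\Hom_{\tilde R_\mu}(F^{e_j}_*\tilde R_\mu,\tilde R_\mu)$ and $f_j\in(\ba_\mu\tilde R_\mu)^{\lceil t(p^{e_j}-1)\rceil}$ can, after expanding $f_j=\sum_k g_{jk}h_{jk}$ with $g_{jk}\in\ba_\mu^{\lceil t(p^{e_j}-1)\rceil}$ and $h_{jk}\in\tilde R_\mu$, be rewritten via the $\tilde R_\mu$-linear maps $\psi_{j,k}(F^{e_j}_*a):=\psi_j(F^{e_j}_*(h_{jk}a))$ and the $R_\mu$-linear composites $\pi\circ\psi_{j,k}|_{F^{e_j}_*R_\mu}\in\Hom_{R_\mu}(F^{e_j}_*R_\mu,R_\mu)$ as an element of $\sigma(R_\mu,\ba_\mu^t)$ equal to $\pi(1)=1$. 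Hence $\sigma(R_\mu,\ba_\mu^t)=R_\mu$ on a dense set of $\mu$, and $(R,\ba^t)$ is of dense sharply $F$-pure type.

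The main obstacle is this final descent step: it is not one of the propositions of Section~4 and rests on the $\sigma$-computation above together with the invertibility of $r$ in cofinitely many residue characteristics. The remainder of the argument is a direct assembly through the ultra-$F$-purity of $R$ and of its canonical cover, with the $\Q$-Gorenstein hypothesis used precisely to make the canonical covering available and to secure the splitting needed for the last descent.
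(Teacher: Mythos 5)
Your proof is correct and follows essentially the same route as the paper's: chain Propositions \ref{propositon dense F-pure type -> ultra-F-pure}, \ref{propostion pure subring of ultra-F-pure is ultra-F-pure}, \ref{proposition canonical covering ultra-F-pure}, and \ref{proposition quasi-Gorenstein + ultra-F-pure -> dense F-pure type}, then descend from $\tilde R$ to $R$ along the split finite inclusion. The only difference is that you spell out the final descent step explicitly via the $\sigma$-characterization and the mod-$p$ persistence of the splitting, whereas the paper simply states ``Since $R\to \tilde{R}$ is finite and split, $(R,\ba^t)$ is also of dense sharply $F$-pure type'' without further justification.
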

\begin{proof}
	If $(S,(\ba S)^t)$ is of dense sharply $F$-pure type, then there exist a non-principal ultrafilter $\mathcal{F}$ on $\mathcal{P}$ and an isomorphism $\alpha:\ulim_p \bar{\F_{p}} \cong \C$ such that $(S_p,(\ba_p S_p)^t)$ is sharply $F$-pure for almost all $p$ by Proposition \ref{propositon dense F-pure type -> ultra-F-pure}. Since $R\to S$ is a pure local $\C$-algebra homomorphism, $(R,\ba^t)$ is ultra-$F$-pure by Proposition \ref{propostion pure subring of ultra-F-pure is ultra-F-pure}. Let $\tilde{R}$ be a canonical covering of $R$. Then $(\tilde{R},(\ba \tilde{R})^t)$ is sharply ultra-$F$-pure by Proposition \ref{proposition canonical covering ultra-F-pure}. Since $\tilde{R}$ is quasi-Gorenstein and $(\tilde{R},(\ba \tilde{R})^t)$ is sharply ultra-$F$-pure, $(\tilde{R},(\ba \tilde{R})^t)$ is of dense sharply $F$-pure type by Proposition \ref{proposition quasi-Gorenstein + ultra-F-pure -> dense F-pure type}. Since $R\to \tilde{R}$ is finite and split, $(R,\ba^t)$ is also of dense sharply $F$-pure type.
\end{proof}

%%%%%%%%%%%%%%%%%%%%%%%%%%%%%%%%%%%%%%%%%%%%%%%%%%%%%%%%%%%%%%%%%%%%%%%%%%%%%%
\section{Ultra-$F$-injectivity}
%%%%%%%%%%%%%%%%%%%%%%%%%%%%%%%%%%%%%%%%%%%%%%%%%%%%%%%%%%%%%%%%%%%%%%%%%%%%%%
In this section, we discuss a non-standard variant of $F$-injectivity in the same setting as Setting \ref{setting of pairs}.
\begin{defn}
	With notation as in Setting \ref{setting of pairs}, $(R,\ba^t)$ is said to be {\it sharply ultra-$F$-injective} if for any integer $i$, for any nonzero element $\eta \in \H_{\m}^i(R)$ and for any $\epsilon_0\in {^*\N}$, there exist $\epsilon\ge \epsilon_0$ and $f\in \ba^{\lceil t(\pi^\epsilon -1) \rceil}$ such that the image of $\eta$ under the following composite morphism
	\[
		\H_\m^i(R)\to \H_\m^i(F^\epsilon_*R_\infty)\xrightarrow{\cdot F^\epsilon_*f} \H_\m^i(F^\epsilon_*R_\infty)
	\]
	is nonzero.
\end{defn}
\begin{rem}
	If $\ba=R$, then $(R,\ba^t)$ is sharply ultra-$F$-injective if and only if
	\[
		\H_\m^i(R) \to \H_\m^i(R^{\upf})
	\]
	is injective for all $i\in \Z$. When this condition holds, we say that $R$ is ultra-$F$-injective.
\end{rem}
\begin{prop}\label{proposition dense F-inj type -> F-inj for a.a.p}
	With notation as in Setting \ref{setting of pairs}, if $(R,\ba^t)$ is of dense sharply $F$-injective type, then there exist a non-principal ultrafilter $\mathcal{F}$ on $\mathcal{P}$ and an isomorphism $\alpha:\ulim_p \bar{\F_p}\cong \C$ such that $(R_p,\ba_p^t)$ is sharply $F$-injective for almost all $p$.
\end{prop}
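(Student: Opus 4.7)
The plan is to imitate almost verbatim the proof of Proposition~\ref{propositon dense F-pure type -> ultra-F-pure}, replacing the ideal $\sigma(R,\ba^t)$ and Proposition~\ref{proposition of non-f-pure ideal} by the family of ideals $\sigma_{F\text{-inj}}^{(i)}(\omega_R^\bullet,\ba^t)$ and Proposition~\ref{proposition of non-F-injective ideal}. Concretely, write $R$ as the localization $S_\p$ of a finitely generated $\C$-algebra $S$ at a prime $\p$, set $\bb:=\ba\cap S$, and by the hypothesis of dense sharply $F$-injective type choose a model $(A,S_A,\p_A,\bb_A)$ together with a dense subset $D\subseteq\operatorname{Spm}A$ such that $((S_A)_{\p_A}\otimes_A\kappa(\mu),(\bb_A)_\mu^t)$ is sharply $F$-injective for every $\mu\in D$.

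Next, I would build the ultrafilter and isomorphism by the same countable diagonal construction. Enumerate $A\setminus\{0\}=\{x_i\}_{i=1}^\infty$, and inductively pick $\mu_i\in D$ with $\phi(\mu_i)>\phi(\mu_{i-1})$ and $x_1,\dots,x_{i-1}\notin\mu_i$, where $\phi(\mu):=\operatorname{char}A/\mu$. Set $p_i:=\phi(\mu_i)$ and, using Proposition~\ref{existence of ultrafilter}, fix a non-principal ultrafilter $\mathcal{F}$ on $\mathcal{P}$ containing $\{p_i\mid i\in\N\}$. For $p=p_i$ define $\gamma_p:A\to\bar{\F_p}$ as the composite $A\to A/\mu_i\hookrightarrow\bar{\F_{p_i}}$; then $\ulim_p\gamma_p:A\to\ulim_p\bar{\F_p}$ is injective by construction, and Lemma~\ref{lemma of field isomorpisms} provides an isomorphism $\alpha:\ulim_p\bar{\F_p}\cong\C$ compatible with the inclusion $A\hookrightarrow\C$. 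As in the $F$-pure proof, this choice of $(\mathcal{F},\alpha)$ ensures that the approximation $R_p$ agrees with $R_{\mu_i}\otimes_{\kappa(\mu_i)}\bar{\F_{p_i}}$ for almost all $p$, via \cite[Lemma~4.9]{Sch04}.

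Finally, I would transfer sharp $F$-injectivity along this base change. By Proposition~\ref{proposition of non-F-injective ideal}(1), sharp $F$-injectivity of $((S_A)_{\mu_i})_{\p_{\mu_i}}$ with its ideal $(\bb_A)_{\mu_i}^t$ is equivalent to $\sigma_{F\text{-inj}}^{(i)}$ being maximal for all $i$; parts (2)--(4) of the same proposition show that these ideals commute with localization, completion, and flat extension with separable residue field extension. The extension $\kappa(\mu_i)\hookrightarrow\bar{\F_{p_i}}$ is separable algebraic (all algebraic extensions of finite fields are separable), so applying Proposition~\ref{proposition of non-F-injective ideal}(4) gives that $(R_p,\ba_p^t)$ is sharply $F$-injective whenever the reduction $(R_{\mu_i},(\ba_{\mu_i})^t)$ is, i.e.\ for $p\in\{p_i\}$. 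Since $\{p_i\}\in\mathcal{F}$, this holds for almost all $p$, which is the conclusion.

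The only nontrivial point is the one already handled in Proposition~\ref{propositon dense F-pure type -> ultra-F-pure}, namely the bookkeeping that matches approximations to reductions modulo $p$ through the chosen ultrafilter and field isomorphism; the $F$-injective version then follows from the $\sigma_{F\text{-inj}}^{(i)}$-analogue of the flat descent already packaged in Proposition~\ref{proposition of non-F-injective ideal}. No essentially new obstacle arises beyond substituting the $F$-injective ideal formalism for the $F$-pure one.
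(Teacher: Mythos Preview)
Your proposal is correct and follows exactly the approach the paper takes: the paper's proof is a single sentence stating that, by Proposition~\ref{proposition of non-F-injective ideal}, the conclusion follows from an argument similar to Proposition~\ref{propositon dense F-pure type -> ultra-F-pure}, and you have carried out precisely that substitution in detail.
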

\begin{proof}
	  By Proposition \ref{proposition of non-F-injective ideal}, the conclusion follows from an argument similar to Proposition \ref{propositon dense F-pure type -> ultra-F-pure}.
\end{proof}
\begin{prop}\label{proposition F-inj for a.a.p -> ultra-F-inj}
	With notation as in Setting \ref{setting of pairs}, assume that $\ba=(f_1,\dots,f_n)$ and $(R_p,(\ba_p)^t)$ is sharply $F$-injective for almost all $p$. Then for any $i\in \Z$, for any nonzero element $\eta \in \H_\m^i(R)$ and for any $\epsilon_0\in {^*\N}$, there exist $\epsilon\ge \epsilon_0$ and $\mu_1,\dots,\mu_n\in {^*\N}$ such that $\mu_1+\dots+\mu_n=\lceil t(\pi^\epsilon-1)\rceil$ and the image of $\eta$ under the following composite morphism
	\[
	\H_\m^i(R)\to \H_\m^i(F^\epsilon_*R_\infty)\xrightarrow{\cdot F^\epsilon_*f} \H_\m^i(F^\epsilon_*R_\infty)
	\]
	is nonzero, where $f=\prod_{i=1}^n f_i^{\mu_i}$. In particular, $(R,\ba^t)$ is sharply ultra-$F$-injective.
\end{prop}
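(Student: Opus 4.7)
The argument will parallel that of Proposition \ref{proposition F-pure for almost all p -> ultra-F-pure}, with the non-vanishing of local cohomology classes playing the role formerly played by purity, and with the injectivity from Proposition \ref{proposition local cohomology ultraproducts injective} doing the bookkeeping. Fix a nonzero $\eta\in\H_\m^i(R)$ and $\epsilon_0=\ulim_p e_{0,p}\in{^*\N}$. My first observation is that $\eta$ has a nonzero approximation $(\eta_p)$: the natural map $\H_\m^i(R)\to\ulim_p\H_{\m_p}^i(R_p)$ factors as $\H_\m^i(R)\to\H_\m^i(R_\infty)\to\ulim_p\H_{\m_p}^i(R_p)$, and the first arrow is injective by faithful flatness of $R\to R_\infty$ (which commutes with local cohomology), while the second is injective by Proposition \ref{proposition local cohomology ultraproducts injective} applied with $\epsilon$ the image of $0$.

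For almost all $p$, sharp $F$-injectivity of $(R_p,\ba_p^t)$ provides $e_p\ge e_{0,p}$ and $g_p\in\ba_p^{\lceil t(p^{e_p}-1)\rceil}$ with $(\cdot F^{e_p}_* g_p)(\eta_p)\ne 0$ in $\H_{\m_p}^i(F^{e_p}_*R_p)$. Expand $g_p=\sum_{|\alpha|=\lceil t(p^{e_p}-1)\rceil}c_{\alpha,p}\, f_p^\alpha$ as an $R_p$-linear combination of monomials in the generators $f_{i,p}$. Since $x\mapsto \cdot F^{e_p}_* x$ is additive in $x$, some multi-index $\alpha_p=(m_{1,p},\dots,m_{n,p})$ must satisfy $(\cdot F^{e_p}_*(c_{\alpha_p,p}f_p^{\alpha_p}))(\eta_p)\ne 0$. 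A direct computation shows this map factors as $\cdot F^{e_p}_* f_p^{\alpha_p}$ followed by the $R_p$-linear multiplication map $F^{e_p}_* R_p\to F^{e_p}_* R_p,\ F^{e_p}_* r\mapsto F^{e_p}_*(c_{\alpha_p,p} r)$, so $(\cdot F^{e_p}_* f_p^{\alpha_p})(\eta_p)\ne 0$ as well. Setting $\epsilon:=\ulim_p e_p\ge\epsilon_0$, $\mu_i:=\ulim_p m_{i,p}$, and $f:=\prod_i f_i^{\mu_i}$, the coordinate-wise definitions of ceiling and sum in ${^*\N}$ force $\mu_1+\dots+\mu_n=\lceil t(\pi^\epsilon-1)\rceil$ and $f\in\ba^{\lceil t(\pi^\epsilon-1)\rceil}$.

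To conclude, I would exhibit the commutative diagram
\[
\xymatrix@C=35pt{
\H_\m^i(R)\ar[r]\ar[d] & \H_\m^i(F^\epsilon_* R_\infty)\ar[r]^-{\cdot F^\epsilon_* f}\ar@{^{(}->}[d] & \H_\m^i(F^\epsilon_* R_\infty)\ar@{^{(}->}[d]\\
\ulim_p\H_{\m_p}^i(R_p)\ar[r] & \ulim_p\H_{\m_p}^i(F^{e_p}_* R_p)\ar[r]_-{\cdot\ulim_p F^{e_p}_* f_p^{\alpha_p}} & \ulim_p\H_{\m_p}^i(F^{e_p}_* R_p)
}
\]
whose two right-hand verticals are injective by Proposition \ref{proposition local cohomology ultraproducts injective}. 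The image of $\eta$ along the bottom row is the ultralimit of $(\cdot F^{e_p}_* f_p^{\alpha_p})(\eta_p)$, which is nonzero because almost every coordinate is nonzero by construction. Chasing the diagram and applying injectivity of the rightmost vertical then forces $(\cdot F^\epsilon_* f)(\eta)\ne 0$ in $\H_\m^i(F^\epsilon_* R_\infty)$, giving sharp ultra-$F$-injectivity of $(R,\ba^t)$.

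The main technical hurdle is the pigeonhole-and-factorization step that passes from the abstract element $g_p$ to a single monomial $f_p^{\alpha_p}$ without losing non-vanishing of the image on $\eta_p$, combined with the bookkeeping that ensures the non-standard exponents $\mu_i$ assemble consistently so that $\mu_1+\dots+\mu_n=\lceil t(\pi^\epsilon-1)\rceil$ on the nose, as demanded by the definition of sharp ultra-$F$-injectivity.
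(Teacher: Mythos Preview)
Your proof is correct and follows essentially the same approach as the paper's: both arguments use Proposition \ref{proposition local cohomology ultraproducts injective} to get $\eta_p\neq 0$ for almost all $p$, invoke sharp $F$-injectivity of $(R_p,\ba_p^t)$ coordinate-wise to produce $e_p\ge e_{0,p}$ and a monomial $f_p=\prod_i f_{i,p}^{m_{i,p}}$ with $(\cdot F^{e_p}_*f_p)(\eta_p)\neq 0$, and then take ultralimits. You are simply more explicit than the paper in two places---the pigeonhole-and-factorization passage from an arbitrary $g_p\in\ba_p^{\lceil t(p^{e_p}-1)\rceil}$ to a single monomial, and the final diagram chase using the injectivity of $\H_\m^i(F^\epsilon_*R_\infty)\to\ulim_p\H_{\m_p}^i(F^{e_p}_*R_p)$---whereas the paper absorbs both into the phrases ``by the assumption'' and ``Then the image of $\eta$ \dots\ is nonzero.''
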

\begin{proof}
	Take any $i\in \Z$, any nonzero element $\eta\in \H_\m^i(R)$ and any $\epsilon_0=\ulim_p {e_{0,p}}\in {^*\N}$. Since $\H_\m^i(R)\to \ulim_p \H_{\m_p}^i(R_p)$ is injective by Proposition \ref{proposition local cohomology ultraproducts injective}, $\eta_p \in \H_{\m_p}^i(R_p)$ is nonzero for almost all $p$. By the assumption, for almost all $p$, there exist $e_p\ge e_{0,p}$ and $m_{1,p},\dots, m_{n,p}\in \N$ such that $m_{1,p}+\dots +m_{n,p}=\lceil t(p^{e_p}-1)\rceil$ and the image of $\eta_p$ under the morphism $\H_{\m_p}^i(R_p) \xrightarrow{\cdot F^{e_p}_*f_p} \H_{\m_p}^i(F^{e_p}_*R_p) $ is nonzero, where $f_p:=f_{1,p}^{m_{1,p}}\cdots f_{n,p}^{m_{n,p}}$. Let $\epsilon=\ulim_p e_p$ and $f=\ulim_p f_p$. Then the image of $\eta$ under the morphism 
	\[
		\H_\m^i(R) \xrightarrow{\cdot F^{\epsilon}_*f} \H_\m^i(F^\epsilon_*R_\infty)
	\]
	is nonzero, which completes the proof.
\end{proof}
\begin{defn}[{\cite[Section 2]{CGM16}}]
	Let $R$ be a ring and $S$ be an $R$-algebra. A ring homomorphism $R\to S$ is said to be {\it strongly pure} if for any $\q\in \Spec S$, $R_{\q\cap R}\to S_\q$ is pure.
\end{defn}
\begin{rem}
	If $R\to S$ is faithfully flat, then $R\to S$ is strongly pure.
\end{rem}
Since strongly pure morphisms are a somewhat limited class, we consider the following condition enough strong to show the descent of ultra-$F$-injectivity.
\begin{defn}
	Let $(R,\m)$ be a local ring and $S$ be an $R$-algebra. A ring homomorphism $R\to S$ is said to satisfy the condition (*) if there exists a prime ideal $\q$ of $S$ minimal among primes of $S$ lying over $\m$ such that $R\to S_\q$ is pure.
\end{defn}
\begin{eg}
	\begin{enumerate}
		\item Let $R=(\C[xy,xz])_{(xy,xz)}$, $S=(\C[x,y,z])_{(x,y,z)}$, $\q_1=(x)S$ and $\q_2=(y,z)$. Then $\q_1\cap R=\q_2\cap R=(xy,xz)R$. $R \to S_{\q_1}$ is not pure and $R\to S_{\q_2}$ is pure. Hence, $R\to S$ satisfies the condition (*) but is not strongly pure.
		\item Let $R=(\C[xz,xw,yz,yw])_{(xz,xw,yz,yw)}$, $S=(\C[x,y,z,w])_{(x,y,z,w)}$. Let $\q$ be a prime ideal of $S$ minimal among primes of $S$ lying over $\m$. Then we have $\q=(x,y)R$ or $\q=(z,w)R$. In both cases, $R\to S_\q$ is not pure. Hence, $R\to S$ is pure but does not satisfy the condition (*).
	\end{enumerate}
\end{eg}
\begin{prop}\label{proposition ultra-F-inj pure subring}
	With notation as in Setting \ref{setting of pairs}, suppose that $S$ is a reduced local ring essentially of finite type over $\C$, and a local $\C$-algebra homomorphism $R\to S$ satisfies the condition (*). If $(S_p,(\ba_p S_p)^t)$ is sharply $F$-injective for almost all $p$, then $(R,\ba^t)$ is sharply ultra-$F$-injective.
\end{prop}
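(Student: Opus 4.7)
The plan is to use condition $(*)$ to pick a prime $\q\in\Spec S$ minimal over $\m S$ such that $R\to S_\q$ is pure, then push a chosen nonzero $\eta\in \H_\m^i(R)$ to a nonzero class $\xi\in \H_{\q S_\q}^i(S_\q)$, apply sharp ultra-$F$-injectivity of $(S_\q,(\ba S_\q)^t)$ to produce the required $\epsilon$ and $f$, and finally descend the nonvanishing back to $\H_\m^i(F^\epsilon_* R_\infty)$ via a commutative square induced by $R\to S_\q$.

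The first ingredient is a lemma: if $R\to T$ is pure, then $\H_\m^i(R)\to \H_\m^i(T)$ is injective. I would prove this using the \v{C}ech complex $\check{C}^\bullet(\underline{x};-)$ attached to a system of parameters $\underline{x}$ of $\m$: since $\check{C}^j(\underline{x};T)=\check{C}^j(\underline{x};R)\otimes_R T$, any cocycle $c\in \check{C}^i(\underline{x};R)$ that becomes a coboundary over $T$ satisfies $c\otimes 1\in N\otimes_R T$, where $N$ is the image of the preceding \v{C}ech differential in $\check{C}^i(\underline{x};R)$. Purity applied to the $R$-module $\check{C}^i(\underline{x};R)/N$ then forces $c\in N$, so $c$ is already a coboundary over $R$. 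Applied to $R\to S_\q$, together with $\H_\m^i(S_\q)=\H_{\q S_\q}^i(S_\q)$ (the ideals $\m S_\q$ and $\q S_\q$ have the same radical since $\q$ is minimal over $\m S$), this shows that $\xi$, the image of $\eta$ in $\H_{\q S_\q}^i(S_\q)$, is nonzero.

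Next, sharp $F$-injectivity localizes by Proposition \ref{proposition of non-F-injective ideal}(1), (2), so $((S_\q)_p,(\ba_p(S_\q)_p)^t)$ is sharply $F$-injective for almost all $p$. Applying Proposition \ref{proposition F-inj for a.a.p -> ultra-F-inj} to $(S_\q,(\ba S_\q)^t)$, the element $\xi$, and the given $\epsilon_0$ supplies $\epsilon\ge \epsilon_0$ and $\mu_1,\dots,\mu_n\in {}^*\N$ with $\mu_1+\cdots+\mu_n=\lceil t(\pi^\epsilon-1)\rceil$ such that $f:=\prod_j f_j^{\mu_j}$ sends $\xi$ to a nonzero class in $\H_{\q S_\q}^i(F^\epsilon_*(S_\q)_\infty)$. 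Because each generator $f_j$ already lies in $R$, this $f$ lies in $\ba^{\lceil t(\pi^\epsilon-1)\rceil}\subseteq R_\infty$, which is exactly the multiplier required by the definition of sharply ultra-$F$-injectivity for $(R,\ba^t)$.

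The proof concludes with the commutative diagram
\[
\xymatrix{
\H_\m^i(R)\ar[r]^-{\cdot F^\epsilon_* f}\ar[d] & \H_\m^i(F^\epsilon_* R_\infty)\ar[d] \\
\H_{\q S_\q}^i(S_\q)\ar[r]^-{\cdot F^\epsilon_* f} & \H_{\q S_\q}^i(F^\epsilon_*(S_\q)_\infty)
}
\]
induced by $R\to S_\q$ and $R_\infty\to (S_\q)_\infty$, together with the identification $\H_\m^i=\H_{\q S_\q}^i$ on $S_\q$-modules. Since the image of $\eta$ in the bottom-right corner is nonzero by the previous step, commutativity forces its image in the top-right to be nonzero, which gives sharp ultra-$F$-injectivity of $(R,\ba^t)$. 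The step I anticipate being the main obstacle is the pure-descent lemma for local cohomology: its \v{C}ech-complex proof uses the full strength of purity (the injectivity of $N\otimes_R T\hookrightarrow M\otimes_R T$ for arbitrary submodule inclusions $N\subseteq M$), and one must be careful because $S_\q$ is not flat over $R$, so the usual flat base-change comparison of local cohomology is unavailable.
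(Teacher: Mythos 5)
Your proposal follows essentially the same route as the paper's proof (which itself refers to Datta–Murayama's argument): pick $\q$ minimal over $\m S$ with $R\to S_\q$ pure, use purity to carry a nonzero socle class of $\H_\m^i(R)$ into $\H_{\q S_\q}^i(S_\q)$ nontrivially, apply Proposition~\ref{proposition F-inj for a.a.p -> ultra-F-inj} to $(S_\q,(\ba S_\q)^t)$ to produce $\epsilon$ and $f$, and conclude via the commutative square linking $\H_\m^i(F^\epsilon_*R_\infty)$ and $\H_{\q S_\q}^i(F^\epsilon_*(S_\q)_\infty)$. The extra details you supply --- the \v{C}ech-complex proof that a pure map $R\to T$ injects $\H_\m^i(R)$ into $\H_\m^i(T)$ (which needs the full strength of purity for non-finitely-generated modules like $\check{C}^i(\underline{x};R)/N$, as you correctly flag), the explicit appeal to Proposition~\ref{proposition of non-F-injective ideal}(2) to localize sharp $F$-injectivity to $(S_p)_{\q_p}$, and the observation that $f$ lands in $\ba^{\lceil t(\pi^\epsilon-1)\rceil}$ because it is built from the generators $f_1,\dots,f_n$ of $\ba$ pulled back from $R$ --- are steps the paper's proof states without elaboration, and filling them in as you did is correct and useful.
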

\begin{proof}
	We argue similarly to \cite[Theorem 3.8]{DM22}. Let $\q$ be a prime ideal that is minimal among primes of $S$ lying over $\m$ and such that $R\to S_\q$ is pure. Then $R\to S_\q$ is pure and $S_\q/\m S_\q$ is of dimension zero. 
	Take any $i\in \Z$, any nonzero element $\eta\in \H_\m^i(R)$ and any $\epsilon_0\in {^*\N}$. Since $R\to S_\q$ is pure, the image of $\eta$ under the morphism $\H_\m^i(R)\to H_{\m S_\q}^i(S_\q)\cong \H_{\q S_\q}^i(S_\q)$ is nonzero. By Proposition \ref{proposition F-inj for a.a.p -> ultra-F-inj}, there exists $\epsilon \ge \epsilon_0$ and $f\in \ba^{\lceil t(\pi^\epsilon -1)\rceil}$ such that the image of $\eta$ under the composite morphism 
	\[
		\H_\m^i(R) \to \H_{\q S_\q}^i(S_\q) \xrightarrow{\cdot F^{\epsilon}_*f} \H_{\q S_\q}^i(F^\epsilon_*(S_\q)_\infty)
	\]
	is nonzero.
	Considering a commutative diagram
	\[
		\xymatrix{
			\H_\m^i(R) \ar[r] \ar[d]^-{\cdot F^\epsilon_* f} & \H_{\m S_\q}^i(S_\q)  \ar[r]^-{\cong} \ar[d]^-{\cdot F^\epsilon_* f}& \H_{\q S_\q}^i(S_\q) \ar[d]^-{\cdot F^\epsilon_* f} \\
			 \H_{\m}^i(F^\epsilon_* R_\infty) \ar[r] & \H_{\m S_\q}^i(F^\epsilon_* (S_\q)_\infty) \ar[r]^-\cong & \H_{\q S_\q}^i(F^\epsilon_* (S_\q)_\infty)
		},
	\]
	the image of $\eta$ under the morphism $\H_\m^i(R)\xrightarrow{\cdot F^\epsilon_* f} \H_\m^i(F^\epsilon_* R_\infty)$ is nonzero. Therefore, $(R,\ba)^t$ is sharply ultra-$F$-injective.
\end{proof}
\begin{prop}\label{proposition ultra-F-inj->dense F-injt type}
	With notation as in Setting \ref{setting of pairs}, assume that $(R,\ba^t)$ is sharply ultra-$F$-injective and $R/\m\cong \C$. Then $(R,\ba^t)$ is dense sharply $F$-injective type.
\end{prop}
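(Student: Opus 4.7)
The plan is to adapt the proof of Proposition~\ref{proposition quasi-Gorenstein + ultra-F-pure -> dense F-pure type}, replacing the single socle element of $\H^d_{\m}(R)$ used in the quasi-Gorenstein case with a socle-theoretic analysis valid in every cohomological degree. I argue by contradiction: suppose $(R_p,\ba_p^t)$ is not sharply $F$-injective for almost all $p$, and I will produce a nonzero $\eta\in \H^i_\m(R)$ whose behavior contradicts sharp ultra-$F$-injectivity.

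The starting point is Proposition~\ref{proposition of non-F-injective ideal}(1), which translates the failure of sharp $F$-injectivity into the existence, for almost all $p$, of some $i_p\in\{0,\dots,d\}$ with $\sigma^{(i_p)}_{\text{$F$-inj}}(\omega^{\bullet}_{R_p},\ba_p^t)\subsetneq h^{-i_p}\omega^{\bullet}_{R_p}$; the ultrafilter pigeonhole forces $i_p=i$ to be constant. Matlis duality then identifies the nonzero finitely generated quotient $N_p:=h^{-i}\omega^{\bullet}_{R_p}/\sigma^{(i)}_{\text{$F$-inj}}(\omega^{\bullet}_{R_p},\ba_p^t)$ with a nonzero submodule $N_p^{\vee}\subseteq \H^i_{\m_p}(R_p)$ consisting precisely of the elements annihilated by $\cdot F^e_*f$ for every $e\ge 1$ and every $f\in\ba_p^{\lceil t(p^e-1)\rceil}$. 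Passing to socles yields a nonzero $\bar{\F_p}$-subspace $V_p\subseteq \operatorname{Soc}_{R_p}\H^i_{\m_p}(R_p)$ of such universally bad elements.

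The key step is to lift these bad socle elements back to $R$. The hypothesis $R/\m\cong\C$ ensures $R_\infty/\m R_\infty\cong \ulim_p \bar{\F_p}\cong\C$, and Matlis duality gives the identification $\operatorname{Soc}_R \H^i_\m(R)\cong \Hom_{R/\m}(h^{-i}\omega^{\bullet}_R/\m\,h^{-i}\omega^{\bullet}_R,\,R/\m)$; combined with Lemma~\ref{lemma of approximations of Ext} (which shows that $h^{-i}\omega^{\bullet}_{R_p}$ is an approximation of $h^{-i}\omega^{\bullet}_R$), this forces $\dim_\C \operatorname{Soc}_R \H^i_\m(R)=\dim_{\bar{\F_p}}\operatorname{Soc}_{R_p}\H^i_{\m_p}(R_p)$ for almost all $p$. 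Together with the canonical injection $\operatorname{Soc}_R \H^i_\m(R)\hookrightarrow \ulim_p \operatorname{Soc}_{R_p}\H^i_{\m_p}(R_p)$ supplied by faithful flatness of $R\to R_\infty$ and Proposition~\ref{proposition local cohomology ultraproducts injective} (applied with $\epsilon=0$), this matching of dimensions over $\C=\ulim_p \bar{\F_p}$ promotes the injection to an isomorphism of $\C$-vector spaces. Hence $\ulim_p V_p$ pulls back to a nonzero $\C$-subspace of $\operatorname{Soc}_R \H^i_\m(R)$, and I pick a nonzero $\eta$ in this subspace whose approximation $(\eta_p)$ satisfies $\eta_p\in V_p$ for almost all $p$.

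Finally, sharp ultra-$F$-injectivity applied to $\eta$ and $\epsilon_0=1$ produces $\epsilon=\ulim_p e_p\in{^*\N}$ and $f=\ulim_p f_p\in\ba^{\lceil t(\pi^\epsilon-1)\rceil}$ with $\cdot F^\epsilon_*f(\eta)\neq 0$ in $\H^i_\m(F^\epsilon_*R_\infty)$. Proposition~\ref{proposition local cohomology ultraproducts injective} shows this image is still nonzero in $\ulim_p \H^i_{\m_p}(F^{e_p}_*R_p)$, while a commutative diagram at the level of approximations (mirroring the one in the proof of Proposition~\ref{proposition quasi-Gorenstein + ultra-F-pure -> dense F-pure type}) identifies this image with $\ulim_p \bigl(\cdot F^{e_p}_*f_p(\eta_p)\bigr)$, which vanishes termwise by the defining property of $V_p$. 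The resulting contradiction shows $(R_p,\ba_p^t)$ is sharply $F$-injective for almost all $p$, and comparing approximations with reductions modulo $p$ then gives dense sharp $F$-injective type. The principal obstacle I anticipate is the socle-dimension matching in the third paragraph, which forces the socle injection to be an isomorphism; this crucially uses $R/\m\cong\C$ so that all relevant residue fields line up in the ultraproduct.
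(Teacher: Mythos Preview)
Your proof is correct and follows the same overall architecture as the paper's: argue by contradiction, reduce to a single cohomological degree $i$, establish the socle isomorphism $\operatorname{Soc}_R\H^i_\m(R)\cong\operatorname{Soc}_{R_\infty}(\ulim_p\H^i_{\m_p}(R_p))$ via local duality, Lemma~\ref{lemma of approximations of Ext}, and the hypothesis $R/\m\cong\C$, and then confront the lifted bad element with Proposition~\ref{proposition local cohomology ultraproducts injective}. This socle isomorphism is exactly the paper's Claim, proved the same way.

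The one genuine difference is the order of quantifiers. The paper first chooses, for almost all $p$, witnesses $e_p,f_p$ to the failure of sharp $F$-injectivity, sets $\epsilon=\ulim_p e_p$ and $f=\ulim_p f_p$, and then asserts that $\cdot F^\epsilon_*f:\H^i_\m(R)\to\H^i_\m(F^\epsilon_*R_\infty)$ is injective ``by the assumption''; it then uses the Claim to push this injectivity down to the ultraproduct. You instead use Proposition~\ref{proposition of non-F-injective ideal} and Matlis duality to isolate, for each $p$, the subspace $V_p$ of socle elements killed by \emph{every} $\cdot F^e_*f$, lift a nonzero element of $\ulim_p V_p$ to $\eta\in\operatorname{Soc}_R\H^i_\m(R)$ via the socle isomorphism, and only then invoke sharp ultra-$F$-injectivity to produce $\epsilon,f$ tailored to this $\eta$. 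Your ordering is the more transparent one: sharp ultra-$F$-injectivity guarantees for each $\eta$ a possibly different pair $(\epsilon,f)$, so fixing $\eta$ first and then choosing $(\epsilon,f)$ is the natural way to extract a contradiction, whereas the paper's phrasing requires the reader to unwind how the Claim interacts with the kernel at each $p$.
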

\begin{proof}
	Suppose that $(R_p,(\ba_p)^t)$ is not sharply $F$-injective for almost all $p$. Then there exists $i\in \Z$ such that for almost all $p$, there exist $e_p\in \N$ and $f_p\in \ba_p^{\lceil t(p^{e_p}-1)\rceil}$ such that $\H_{\m_p}^i(R_p)\xrightarrow{\cdot F^{e_p}_*f_p} \H_{\m_p}^i(F^{e_p}_*R_p)$ is not injective. Let $\epsilon=\ulim_p e_p\in {^*\N}$ and $f=\ulim_p f_p$. Then we have a commutative diagram
	\[
		\xymatrix{
			\H_\m^i(R) \ar[r] \ar@{^{(}->}[d] & \H_\m^i(F^{\epsilon}_*R_\infty) \ar@{^{(}->}[d] \\
			\ulim_p \H_{\m_p}^i(R_p) \ar[r] & \ulim_p \H_{\m_p}^i(F^{e_p}_*R_p)
	},
	\]
	where the vertical maps are injective by Proposition \ref{proposition local cohomology ultraproducts injective}.
	\begin{claim}
		$\operatorname{Soc}_R\H_\m^i(R)\cong \operatorname{Soc}_{R_\infty}\left(\ulim_p \H_{\m_p}^i(R_p)\right)$.
	\end{claim}
	\begin{claimproof}
		Take a regular local ring $(S,\n)$ essentially of finite type over $\C$ such that $R$ is a homomorphic image of $S$ and let $t=\dim S$. Then
		\[
			\H_\m^i(R)\cong \H_\n^i(R) \cong \Hom_S (\Ext_S^{t-i}(R,S),E_S),
		\]
		where $E_S$ is the injective hull of $R/\m \cong S/\n$ as an $S$-module. Hence, we have
		\begin{eqnarray*}
			\operatorname{Soc}_S \H_\m^i(R) & \cong & \operatorname{Soc}_S  (\Hom_S (\Ext_S^{t-i}(R,S),E_S))\\
			&\cong & \Hom_S (\Ext_S^{t-i}(R,S),S/\n).
		\end{eqnarray*}
	Therefore, \[
	l_R(\operatorname{Soc}_R\H_\m^i(R))=l_R(\Hom_S(\Ext_S^{t-i}(R,S),S/\n)).
	\]
	On the other hand, 
	\[
	l_R(\Hom_S(\Ext_S^{t-i}(R,S),S/\n))=l_{R_p}(\Hom_{S_p}(\Ext_{S_p}^{t-i}(R_p,S_p),S_p/\n_p)
	\]
	for almost all $p$. By a similar argument, we have 
	\[
		l_R(\operatorname{Soc}_R \H_\m^i(R))=l_{R_p}(\operatorname{Soc}_{R_p} \H_{\m_p}^i(R_p))
	\] for almost all $p$. Hence, $\operatorname{Soc}_{R_\infty}(\ulim_p \H_{\m_p}^i(R_p))\cong \ulim_p(\operatorname{Soc}_{R_p} \H_{\m_p}^i(R_p))$ is a finite $R_\infty$-module of length $l_R(\operatorname{Soc}_{R}\H_\m^i(R))$. Since $R/\m\cong \C$ by the assumption, we have $R_\infty/\m R_\infty\cong \C$. Therefore, $\operatorname{Soc}_{R_\infty} (\ulim_p \H_{\m_p}^i(R_p))$ is also a finite $R$-module of length $l_R(\operatorname{Soc}_R\H_\m^i(R))$. By Proposition \ref{proposition local cohomology ultraproducts injective}, $\H_\m^i(R)\to \ulim_p \H_{\m_p}^i(R_p)$ is injective. Hence, the morphism
	\[
		\operatorname{Soc}_R \H_\m^i(R) \to \operatorname{Soc}_{R_\infty}(\ulim_p \H_{\m_p}^i(R_p))
	\]
	is an isomorphism.
	\end{claimproof}
	Since $\H_\m^i(R) \xrightarrow{\cdot F^{\epsilon}_*f} \H_\m^i(F^{\epsilon}_*R_\infty)$ is injective by the assumption,
	\[
		\ulim_p \H_{\m_p}^i(R_p) \xrightarrow{\cdot F^{e_p}_*f_p}\ulim_p \H_{\m_p}^i(F^{e_p}_*R_p)
	\]
	 is injective by the above claim. However, this is a contradiction. Hence, $(R_p,(\ba_p)^t)$ is sharply $F$-injective for almost all $p$. Comparing approximations with reductions modulo $p>0$, $(R,\ba^t)$ is of dense sharply $F$-injective type.
\end{proof}
Combining above propositions, we get the following theorem.
\begin{thm}
	With notation as in Setting \ref{setting of pairs}, suppose that $S$ is a reduced local ring essentially of finite type over $\C$, a local $\C$-algebra homomorphism $R\to S$ satisfies the condition (*) and $R/\m\cong \C$. If $(S,(\ba S)^t)$ is of dense sharply $F$-injective type, then $(R,\ba^t)$ is of dense sharply $F$-injective type.
\end{thm}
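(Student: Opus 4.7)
The plan is to chain together the four propositions developed in this section; the theorem follows without any genuinely new argument, and the only subtlety is ensuring that the ultrafilter $\mathcal{F}$ and isomorphism $\alpha:\ulim_p \bar{\F_p}\cong \C$ used in one step are the same as those used in the next.

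First, I would invoke Proposition \ref{proposition dense F-inj type -> F-inj for a.a.p} applied to $(S,(\ba S)^t)$: the hypothesis that this pair is of dense sharply $F$-injective type yields a non-principal ultrafilter $\mathcal{F}$ on $\mathcal{P}$ and an isomorphism $\alpha:\ulim_p\bar{\F_p}\cong\C$ such that $(S_p,(\ba_p S_p)^t)$ is sharply $F$-injective for almost all $p$. I fix this pair $(\mathcal{F},\alpha)$ for the remainder of the argument and use them to take all approximations below, so that the approximation $S_p$ of $S$ appearing in Proposition \ref{proposition dense F-inj type -> F-inj for a.a.p} agrees with the one used in Proposition \ref{proposition ultra-F-inj pure subring}.

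Next, since the local $\C$-algebra homomorphism $R\to S$ satisfies the condition $(*)$ by hypothesis, Proposition \ref{proposition ultra-F-inj pure subring} (applied with the ultrafilter and isomorphism fixed above) implies that $(R,\ba^t)$ is sharply ultra-$F$-injective. Finally, using the hypothesis $R/\m\cong\C$, Proposition \ref{proposition ultra-F-inj->dense F-injt type} converts sharp ultra-$F$-injectivity back into being of dense sharply $F$-injective type, yielding the conclusion.

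There is no genuine obstacle here, but one should be careful on two points. First, the definition of dense sharply $F$-injective type is independent of the model chosen, so producing the ultrafilter and isomorphism for $(S,(\ba S)^t)$ in the first step and then \emph{reusing} them to build approximations of $R$ and $\ba$ is legitimate; the compatibility is what makes the composition $R\to S\to S_\q$ descend properly into the non-standard world. Second, one needs to check that condition $(*)$ is preserved by the approximation step, but this is subsumed in the statement of Proposition \ref{proposition ultra-F-inj pure subring}, so no extra work is needed. Assembling the three implications gives the theorem.
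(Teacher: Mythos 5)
Your proposal chains Propositions \ref{proposition dense F-inj type -> F-inj for a.a.p}, \ref{proposition ultra-F-inj pure subring}, and \ref{proposition ultra-F-inj->dense F-injt type} in exactly the same way the paper does, and your remarks on fixing a single ultrafilter and isomorphism throughout are correct and make the logic explicit. This is the paper's argument, accurately reproduced.
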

\begin{proof}
	By Proposition \ref{proposition dense F-inj type -> F-inj for a.a.p}, there exist a non-principal ultrafilter $\mathcal{F}$ on $\mathcal{P}$ and an isomorphism $\ulim_p\bar{\F_p}\cong \C$ such that $(S_p,(\ba_p S_p)^t)$ is sharply $F$-injective for almost all $p$. By Proposition \ref{proposition ultra-F-inj pure subring}, $(R,\ba^t)$ is sharply ultra- $F$-pure. Since $R/\m\cong \C$, by Proposition \ref{proposition ultra-F-inj->dense F-injt type}, $(R,\ba^t)$ is of dense sharply $F$-injective type.
\end{proof}
\begin{rem}
	We expect that the conclusion holds even if we only suppose that $R\to S$ is pure because it follows from the main result of Godfrey and Murayama \cite{GM22} and the weak ordinarity conjecture (see \cite{BST}).
\end{rem}
%%%%%%%%%%%%%%%%%%%%%%%%%%%%%%%%%%%%%%%%%%%%%%%%%%%%%%%%%%%%%%%%%%%%%%%%%%%%%%


\begin{thebibliography}{99}
%%%%%%%%%%%%%%%%%%%%%%%%%%%%%%%%%%%%%%%%%%%%%%%%%%%%%%%%%%%%%%%%%%%%%%%%%%%%%%
\bibitem{BST}
B.~Bhatt, K.~Schwede and S.~Takagi
\textit{The weak ordinarity conjecture and $F$-singularities},
Higher dimensional algebraic geometry, Adv. Stud. Pure Math. \textbf{74}, Math. Soc. Japan, Tokyo, (2017), 11--39.

\bibitem{Bou87}
J.-F.Boutot
\textit{Singularit\'{e}s rationnelles et quotients par les groupes r\'{e}ductifs.},
Invent. Math.88.1 (1987), 65--68.

\bibitem{BGLM}%
L.~Braun, D.~Greb, K.~Langlois and J.~Moraga, 
Reductive quotients of klt singularities, 
arXiv:2111.02812, preprint (2021). 

\bibitem{CGM16}
S.~Chakraborty, R.~Gurjar and M.~Miyanishi
\textit{Pure subrings of commutative rings},
Nagoya Mathematical Journal, \textbf{221}(1) (2016), 33--68.

\bibitem{Cuo}
N.~T.~Cuong
\textit{On the dimension of the non-Cohen-Macaulay locus of local rings admitting dualizing complexes},
Math. Proc. Cambridge Philos. Soc. \textbf{109} (1991), no.~3, 479--488.

\bibitem{DM22}
R.~Datta and T.~Murayama
\textit{Permanence properties of $F$-injectivity},
preprint, arXiv:1906.11399.


\bibitem{Fed83}
R.~Fedder 
\textit{$F$-purity and rational singularity},
Trans. Amer. Math. Soc. \textbf{278} (1983), no. 2, 461--480.

\bibitem{FST}
O.~Fujino, K.~Schwede and S.~Takagi
\textit{Supplements to non-lc ideal sheaves},
RIMS Kokyuroku Bessatsu, B24, Res. Inst. Math. Sci., Kyoto, 2011, 1--46.

\bibitem{Gab04}
O.~Gabber,
\textit{Notes on some t-structures},
Geometric aspects of Dwork theory. Vol. I, II, 711--734.
Walter de Gruyter, Berlin, 2004

\bibitem{GM22}
C.~Godfrey, T.~Murayama
\textit{Pure subrings of Du Bois singularities are Du Bois singularities},
arXiv:2208.14429, preprint (2022)

\bibitem{Hara}
N.~Hara
\textit{Geometric interpretation of tight closure and test ideals},
Trans. Amer. Math. Soc. \textbf{353} (2001), 1885–-1906.

\bibitem{HW}
N.~Hara and K.~Watanabe
\textit{$F$-regular and $F$-pure rings vs. log terminal and log canonical singularities},
J. Algebraic Geom. \textbf{11} (2002), 363–-392.

%\bibitem{Har}
%R.~Hartshorne
%\textit{Algebraic geometry}
%Graduate Texts in Mathematics, no.~52, Springer-Verlag, New York (1977). 

\bibitem{Hoc}
M.~Hochster
\textit{Cyclic purity versus purity in excellent Noetherian rings},
Trans. Amer. Math. Soc. \textbf{231} (1977), 463--488.

%\bibitem{HH95}
%M.~Hochster and C.~Huneke
%\textit{Applications of the existence of big Cohen-Macaulay algebras} Advances in Math. \textbf{113} (1995), no.~1, 45--117.

\bibitem{HH}
M.~Hochster and C.~Huneke
\textit{Tight closure in equal characteristic zero},
preprint, https://dept.math.lsa.umich.edu/~hochster/tcz.pdf

%\bibitem{HJ}
%M.~Hochster and J.~Jefferies
%\textit{Extensions of primes, flatness, and intersection flatness} Commutative Algebra: 150 years with Roger and Sylvia Wiegand, Contemp. Math. \textbf{773} (2021) 63--81.

\bibitem{HR76}
M.~Hochster and J.~Roberts
\textit{The purity of the Frobenius and local cohomology},
Adv. Math. \textbf{21} (1976), 117--172.


\bibitem{Kaw}
T.~Kawasaki
\textit{Finiteness of Cousin Cohomologies},
Trans. Amer. Math. Soc. \textbf{360}, no.~5 (2008), 2709--39. 

\bibitem{Kov}
S. ~Kov\'acs, 
\textit{Rational, log canonical, Du Bois singularities: on the conjectures of Koll\'ar and Steenbrink},
Compositio Math. \textbf{118} (1999), no.~2, 123--133.

\bibitem{Kun}
E.~Kunz,
\textit{On Noetherian rings of characteristic p},
Amer. J. Math. \textbf{98} (1976), no.~4, 999--1013.

\bibitem{MP}
L.~Ma and T.~Polstra,
\textit{$F$-singularities: a commutative algebra approach},
preprint, https://www.math.purdue.edu/~ma326/ F-singularitiesBook.pdf.

\bibitem{MSS}
L.~Ma, K.~Schwede, K.~Shimomoto
\textit{Local cohomology of Du Bois singularities and applications to families},
Compositio Math. \textbf{153} (2017), no.~10, 2147--2170.

%\bibitem{Mat}
%H.~Matsumura
%\textit{Commutative Algebra}
%Benjamin-Cummings, New York (1980).

\bibitem{MSr11}
M.~Musta\c{t}\u{a} and V.~Srinivas,
Ordinary varieties and the comparison between multiplier ideals and test ideals,
Nagoya Math. J. \textbf{204} (2011), 125--157.

\bibitem{NS}
U.~Nagel and P.~Schenzel
\textit{Cohomological annihilators and Castelnuovo-Mumford regularity},
Commutative Algebra: Syzygies, Multiplicities, and Birational Algebra, Contemp. Math. \textbf{159} (1994), pp. 307--328.

\bibitem{Sch03}
H.~Schoutens
\textit{Non-standard tight closure for affine $\C$-algebras},
Manuscripta Math. \textbf{111}, 379--412 (2003).

\bibitem{Sch04}
H.~Schoutens
\textit{Canonical big Cohen-Macaulay algebras and rational singularities},
Illinois J. Math. \textbf{48} (2004) no.~1, 131--150.

\bibitem{Sch05}
H.~Schoutens
\textit{Log-terminal singularities and vanishing theorems via non-standard tight closure},
J. Alg. Geom. \textbf{14} (2005), 357–-390.

\bibitem{Sch08}
H.~Schoutens
\textit{Pure subrings of regular rings are pseudo-rational},
Trans. Amer. Math. Soc. \textbf{360} (2008), 609--627.

\bibitem{Schw08}
K.~Schwede
\textit{Generalized test ideals, sharp $F$-purity, and sharp test elements},
Math. Res. Lett., \textbf{15}(6) (2008), 1251--1261.

\bibitem{Schw09}
K.~Schwede
\textit{$F$-injective singularities are Du Bois},
Amer. J. Math. \textbf{131}, no.~2 (2009), 445--473.

\bibitem{Schw10}
K.~Schwede,
\textit{A refinement of sharply $F$-pure and strongly $F$-regular pairs.},
J. Commut. Algebra \textbf{2} (2010), no.~1, 91--109.

\bibitem{STV}
A.~Singh, S.~Takagi and M. Varbaro
\textit{A Gorenstein Criterion for Strongly $F$-Regular and Log Terminal Singularities},
International Mathematics Research Notices, \textbf{2017}(21), (2017), 6484–6522.

%\bibitem{ST14b}
%K.~Schwede and K.~Tucker \textit{Test ideals of non-principal ideals: Computations, Jumping Numbers, Alterations and Division Theorems}
%J.Math. Pures. Appl. \textbf{102} (2014), no.~5, 891--929.

\bibitem{ST15}
V.~Srinivas and S.~Takagi
\textit{Nilpotence of Frobenius action and the Hodge
filtration on local cohomology},
Advances in Mathematics \textbf{305} (2017), 456--478.

\bibitem{Tak04}
S.~Takagi
\textit{$F$-singularities of pairs and inversion of adjunction of arbitrary codimension},
Invent. Math. \textbf{157} (2004), no.~1, 123--146.

\bibitem{Tak13}
S.~Takagi,
\textit{Adjoint ideals and a correspondence between log canonicity and $F$-purity},
Algebra Number Theory \textbf{7} (2013), no.~4, 917--942.

\bibitem{TY}
S.~Takagi and T.~Yamaguchi
\textit{On the behavior of adjoint ideals under pure morphisms},
in preparation.

\bibitem{Wat97}
K.~Watanabe,
\textit{$F$-rationality of certain Rees algebras and counterexamples to ``Boutot's theorem'' for  $F$-rational rings},
J. Pure Appl. Algebra \textbf{122} (1997), no.~3, 323--328.

\bibitem{Yam22a}
T.~Yamaguchi
\textit{A characterization of multiplier ideals via ultraproducts},
manuscripta math. \textbf{172} (2023), 1153--1168.

\bibitem{Yam}
T.~Yamaguchi
\textit{Big Cohen-Macaulay test ideals in equal characteristic zero via ultraproducts},
Nagoya Math. J. \textbf{251} (2023), 549--575.

\bibitem{Zhu}
Z.~Zhuang
\textit{Direct summands of klt singularities},
arXiv:2208.12418, preprint (2022).

%\bibitem{Kol}
%J.~Koll\'{a}r,
%\textit{Singularities of the minimal model program} 
%\textbf{200}, Cambridge University Press, 2013.

%\bibitem{KM}
%J.~Koll\'ar and S.~Mori, 
%\textit{Birational geometry of algebraic varieties}, 
%Cambridge Tracts in Mathematics, 134, Cambridge University Press, Cambridge, 1998. 

%\bibitem{Laz}
%R.~Lazarsfeld,
%\textit{Positivity in Algebraic Geometry II}, 
%Ergeb. Math. Grenzgeb. 3. Folge, A Series of Modern Surveys in Mathematics, 49, 
%Springer-Verlag, Berlin, 2004. 

%\bibitem{Mat}
%H.~Matsumura, \textit{Commutative ring theory}, Translated from the Japanese by M. Reid, Second edition, Cambridge Studies in Advanced Mathematics, \textbf{8}, Cambridge University Press, Cambridge, 1989. 

%\bibitem{MS}
%L.~E.~Miller and K.~Schwede,
%Semi-log canonical vs $F$-pure singularities, 
%J. Algebra \textbf{349} (2012), no.~1, 150--164.

%\bibitem{Mur}
%T.~Murayama, Relative vanishing theorems for \textbf{Q}-schemes, arXiv:2101.10397, preprint (2021). 

%\bibitem{Mus}
%M.~Musta\c{t}\v{a},
%On multiplicities of graded sequences of ideals, 
%J. Algebra \textbf{256} (2002), no.~1, 229--249.

%\bibitem{Sch10}
%K.~Schwede,
%Generalized divisors and reflexive sheaves, available at 
%\url{https://www.math.utah.edu/~schwede/Notes/GeneralizedDivisors.pdf}. 

%\bibitem{Si}
%A.~K.~Singh,
%$F$-regularity does not deform, 
%Amer. J. Math. \textbf{121} (1999), no.~4, 919--929.

%\bibitem{Sta} T.~Stacks Project Authors, The Stacks Project.

%\bibitem{Ta} S.~Takagi, A characteristic $p$ analogue of plt singularities and adjoint ideals, Math. Z. \textbf{259} (2008), no.2, 321--341.

\end{thebibliography}
\end{document}